\documentclass{amsart}
\usepackage{color,enumitem,graphicx}
\pretolerance=4000
\setlength{\topmargin}{-.20in}
\setlength{\textheight}{9in}
\setlength{\textwidth}{7in}
\setlength{\headheight}{26pt}
\setlength{\headsep}{20pt}
\setlength{\oddsidemargin}{-0.25in}
\setlength{\evensidemargin}{-0.25in}
\usepackage{graphicx}
\usepackage{subcaption}
\usepackage{amsthm}
\usepackage{amsmath,amssymb}
\usepackage{mathrsfs}
\usepackage[breaklinks]{hyperref}
\newtheorem{thm}{Theorem}[section]

\newtheorem{lem}[thm]{Lemma}
\newtheorem{prop}[thm]{Proposition}
\theoremstyle{definition}
\newtheorem{defn}[thm]{Definition}
\theoremstyle{remark}
\newtheorem{rem}[thm]{Remark}

\numberwithin{equation}{section}



\newcommand{\abs}[1]{\left\vert#1\right\vert}

\newcommand{\norm}[1]{\left\Vert#1\right\Vert}

\begin{document}

\title[ Asymptotic behaviour of the least energy solutions]
 {Asymptotic behaviour of the least energy solutions of fractional semilinear Neumann problem}
\author[S.\ Gandal, J.\,Tyagi]
{ Somnath Gandal, Jagmohan Tyagi }
\address{Somnath Gandal \hfill\break
 Indian Institute of Technology Gandhinagar \newline
 Palaj, Gnadhinagar Gujarat India-382355.}
\email{gandal$\_$somnath@iitgn.ac.in}
\address{Jagmohan\,Tyagi \hfill\break
 Indian Institute of Technology Gandhinagar \newline
 Palaj, Gandhinagar Gujarat, India-382355.}
 \email{jtyagi@iitgn.ac.in, jtyagi1@gmail.com}
\thanks{Submitted \today   \,\,\,\,\,\,\,\,Published-----.}
\subjclass[2010]{35J60, 35B09, 35B40, 35J61, 35R11, 35D30.}
\keywords{Semilinear Neumann problem; fractional Laplacian; positive solutions; asymptotic behaviour}

\begin{abstract}
We establish the asymptotic behaviour of the least energy solutions of the following nonlocal Neumann problem:
 \begin{align*}	\left\{\begin{array}{l l} { d(-\Delta)^{s}u+ u= \abs{u}^{p-1}u }  \text{ in $\Omega,$  } \\ 
		 { \mathcal{N}_{s}u=0 }  \text{ in $\mathbb{R}^{n}\setminus \overline{\Omega},$} \\
		 {u>0}  \text{ in $\Omega,$} \end{array} \right.\end{align*}
where  $\Omega \subset \mathbb{R}^{n}$ is a bounded domain of class $C^{1,1}$, $1<p<\frac{n+s}{n-s},\,n> \max \left\{1, 2s \right\}, 0<s<1,\,d>0$  and $\mathcal{N}_{s}u$ is the nonlocal Neumann derivative. We show that for small $d,$ the least energy solutions $u_d$ of the above problem achieves $L^{\infty}$ bound independent of $d.$  Using this together with suitable $L^{r}$-estimates on $u_d,$ we show that least energy solution $u_d$ achieve maximum on the boundary of  $\Omega$ for $d$ sufficiently small.

\end{abstract}

\maketitle

\tableofcontents
\section{Introduction}
We discuss the asymptotic behaviour of non-constant least energy solutions of the following problem:
 \begin{align}\label{P1}\left\{\begin{array}{l l} { d(-\Delta)^{s}u+ u= \abs{u}^{p-1}u }  \text{ in $\Omega,$  } \\ 
 		 { \mathcal{N}_{s}u=0 }  \text{ in $\mathcal{C} \overline{\Omega},$} \\
 		 {u>0}  \text{ in $\Omega,$} \end{array} \right.\end{align}
 where $\Omega \subset \mathbb{R}^{n}$ be a bounded domain of class $C^{1,1},\,\,1<p<\frac{n+s}{n-s}, n> \max \left\{1, 2s \right\}, 0<s<1, d>0,\,
\mathcal{C} \Omega:= \mathbb{R}^n\setminus \Omega$ and $\mathcal{N}_{s}u$ is the nonlocal Neumann derivative, which is defined next.
The nonlocal operator $(-\Delta)^{s}$ is called the fractional Laplacian which is defined as follows:
\begin{align}\label{pointwisedefn}
	(-\Delta)^{s}u(x)= c_{n,s}P.V.\int_{\mathbb{R}^{n}}\frac{u(x)-u(y)}{\abs{x-y}^{n+2s}}dy.
\end{align}
 Here, by P.V., we mean the Cauchy principal value and $c_{n,s}$ is a normalizing constant, given by
 $$c_{n,s}=\left(\int_{\mathbb{R}^{n}} \frac{1-cosx_{1}}{\abs{x}^{n+2s}}dx\right)^{-1},$$
 see for instance \cite{Di} for the details.  Recently, Dipierro et al. \cite{Dip} have introduced a new nonlocal Neumann condition $\mathcal{N}_{s}, $ which is defined as follows:
 \begin{align}\label{neumann}
 	 \mathcal{N}_{s}u(x):=c_{n,s}\int_{\Omega}\frac{u(x)-u(y)}{\abs{x-y}^{n+2s}}dy, \,\, x \in \mathcal{C} \overline{\Omega}. \end{align} 
The advantage of this nonlocal Neumann condition is that it has simple probabilistic interpretation and (\ref{P1}) has a variational structure. 
Further, $ \mathcal{N}_{s}u$ approaches to the classical Neumann derivative $\partial_{\nu}u$ as $s$ goes to $1.$ 

In the last few decades, mathematical analysis of biological phenomena has gained much attention. For example, the chemotaxis models, which are also known as Keller-Segel models \cite{Kell}, have been widely studied in different directions in many papers, see \cite{Chi, Cie, Dj,  Her, Hill, Hill1, Hors, Jag, Nag} and  the reference therein. Chemotaxis is the natural behaviour of an organism in response of  surrounding chemical gradients that are frequently separated by the cells themselves. We refer to \cite{Arm, Hors, Hors1} for a survey on this subject. The Keller-Segel system with suitable initial data has blow-up solutions in dimension $n \geq 2$ and all solutions are regular in dimension $n=1,$ see for instance \cite{Hill, Hors2, Jag, Nag} and the references therein. The analysis on the steady-state for a chemotactic aggregation model with linear or logarithmic sensitivity function was thoroughly done in many papers, see for instance \cite{Kab,Lin1,Nanj,Ni1,Sch}. Let us point out that the following semilinear Neumann problem is an example of Keller-Segel model with a logarithmic chemotactic sensitivity:
 \begin{align}\label{P3}\left\{\begin{array}{l l} {-d \Delta u+ u= \abs{u}^{p-1}u }  \text{ in $\Omega,$  } \\ 
 		\hspace{0.9cm} {\frac{\partial u}{\partial \nu}=0 }  \text{ on $\partial \Omega$} \\
 		\hspace{1.1cm} {u>0}  \text{ in $\Omega,$} \end{array} \right.\end{align}
 where $d>0,$ $\Omega \subset \mathbb{R}^n$ is a bounded domain with smooth boundary and $1<p \leq \frac{n+2}{n-2}$ if $n \geq 3$ and $1<p<\infty$ if $p=2,$ see \cite{Lin1, Sch} for the details. Problem (\ref{P3}) admits a non-constant solution for $d$ sufficiently small, see\,\cite{Adi1,Lin1,Lin2}. Lin et al.\,\cite{Lin1} and C. S. Lin, W. -M. Ni \cite{Lin2} established the solutions of \eqref{P3} in the subcritical case
 $1<p < \frac{n+2}{n-2} .$ In the critical case, when $ p= \frac{n+2}{n-2},$ Adimurthi and G. Mancini \cite{Adi1} obtained a solution of (\ref{P3}).  There have been developments on the asymptotic behaviour of solutions to such equations. In the subcritical case,  $1<p < \frac{n+2}{n-2},$ W. -M. Ni and I. Takagi \cite{Ni1, Ni2} have studied the shape of least energy solutions of (\ref{P3}). They have shown that the least energy solutions tends to zero as the diffusion constant $d$ goes to zero except at finite number of points. Moreover, the maximum of a solution $u_{d}$ of (\ref{P3}) is attained at a unique point on the boundary of  $\Omega.$ The critical case, i.e., $p= \frac{n+2}{n-2},$ was examined by Adimurthi et al. \cite{Adi} using the blow-up analysis. We refer to \cite{he} for the existence, non-existence and the asymptotic behaviour to critical fractional Choquard equation with a local perturbation.  \\

 
We mention that Problem (\ref{P1}) which we explore in this paper is a nonlocal analogue of the classical problem (\ref{P3}).\\

Recall that the movements of cells of some organisms cannot be described by random jumps. In such situations, L\'{e}vy flights plays an important role. The generalized Keller-Segel model with nonlocal diffusion term $d(-\Delta)^{s}$, where $d$ is a positive constant is used to investigate the  chemotaxis.  For the fractional Keller-Segel model, we refer to \cite{Esc,Huan}. In \cite{Huan}, H. Huang and J. Liu studied the existence, stability, uniqueness and regularity for the following model in dimension $n \geq 2:$ 
\begin{align}
	\begin{cases}
		u_{t}= d(-\Delta)^{s} u- \nabla \cdot \left(u \nabla \phi \right), & x \in \mathbb{R}^n, \,\,\,t \geq 0,  \\ -\Delta \phi=u, & \\ u(x,0)=u_{0}(x),
	\end{cases}
\end{align}
where $d$ is a positive constant, $u(t, x)$ is the density of some biological cells and $\phi(t, x)$ is the chemical substance concentration. We mention the work \cite{cho}, where authors have investigated the asymptotic behaviour of solutions for nonlinear elliptic problems for fractional Laplacian with Dirichlet boundary condition. We refer to \cite{mol}  and the reference therein for in-depth treatment of variational methods to nonlocal fractional problems.\\

Motivated by the above works and very recent works on nonlocal Neumann problem for fractional Laplacian and its connections with fractional Keller-Segel models, we have the following natural question to ask:\\

Question. Can we establish the asymptotic behaviour of least energy solutions of \eqref{P1}?\\

The aim of this paper is to answer the above question. A weak solution of (\ref{P1}) can be obtained as a critical point of the energy functional $J_{d},$ which is defined as follows:
\begin{align}
	J_{d}(u):=\frac{1}{2} \Big[ \frac{dc_{n,s}}{2} \int_{T(\Omega)} \frac{\abs{u(x)-u(y)}^{2}}{\abs{x-y}^{n+2s}}dxdy + 
	\int_{\Omega}u^{2}dx \Big] -\frac{1}{p+1}\int_{\Omega}{\lvert u \rvert}^{p+1}dx, \,\,\,\, u \in H_{\Omega}^{s}.
\end{align}
In the above equation $T(\Omega)= \mathbb{R}^{2n}\setminus (\mathcal{C} \Omega)^{2}$ and the space $H_{\Omega}^{s}$ is defined in (\ref{space}). The functional $J_{d}$ is well-defined and of class $C^2$ follows from the Theorem \ref{fembed}.
An application of \textit{Mountain-Pass Lemma} applying to the functional $J_{d}$ yields that
\begin{align}\label{critical1}
	c_{d}:=\inf_{\gamma \in \Gamma}\max_{[0,1]}J_{d}(\gamma(t))
\end{align}
is a critical value of $J_{d}.$ In the above equation, by $\Gamma,$ we mean the following set:
\begin{align*}
	\Gamma = \biggl\{\gamma \in C([0,1]; H^{s}_{\Omega}) \mid \gamma(0)=1, \, \gamma(1)=u \biggr\},
\end{align*}
where $u \in H^{s}_{\Omega},$ $u>0$ and satisfying $J_{d}(u)=0.$ It turns out that $c_d$ is the least positive critical value, see, Lemma \ref{critical2} next.
For the details one may refer, Theorem 6.1 \cite{Bar} and Theorem 1.1 \cite{Chen1}, where authors have obtained a nonnegative weak solution $u_{d}$ of  (\ref{P1}) with critical value $c_{d}$, provided $d$ is sufficiently
	small. Moreover, $u_{d}$ satisfies $$0< J_{d}(u_{d}) \leq C d^{\frac{n}{2s}},$$
where the constant $C$ is independent of $d.$ Consequently, $u_{d}$ is non-constant. From the proof of Theorem 1.1 \cite{Chen1}, it is immediate to see that  the critical points of $J_{d}$ are not sign changing in $\Omega$. In fact, when $u_{d} \leq 0,$ we can choose $-u_{d}$ in order to have a nonnegative solution of (\ref{P1}). By the strong maximum principle (see, Theorem 2.6 \cite{Cin}), one can see that $u_{d} >0$ a.e. in $\Omega.$ Further, since $u_{d}$ satisfies the Neumann condition $\mathcal{N}_{s}u_{d}(x)=0$ in $\mathcal{C} \Omega$ which implies that $u_{d} >0$ a.e. in  $\mathbb{R}^{n}.$

\begin{defn}
		We call a critical point $u_{d}$ of $J_{d}$ with $J_{d}(u_{d})=c_{d},$ the \textit{least energy} solution or \textit{Mountain-Pass} solution of (\ref{P1}).
\end{defn}

We show the asymptotic behaviour of least energy solutions of (\ref{P1})  following the similar approach as was used for (\ref{P3}) by W. -M. Ni and I. Takagi \cite{Ni1}. 
They used a positive solution $w$ of non-linear Schr\"{o}dinger equation 
$$- \Delta u + u= \abs{u}^{p-1}u \text{ in } \mathbb{R}^{n}, \,\,\,\,\,1<p< \frac{n+2}{n-2}$$ 
to study the asymptotic behaviour of the least energy solutions of (\ref{P3}). The fractional non-linear Schr\"{o}dinger equation 
\begin{align} \label{P2}
	(-\Delta)^{s}u + u = \abs{u}^{p-1}u \text{ in } \mathbb{R}^{n},
\end{align}
where  $1<p<\frac{n+2s}{n-2s}, n> \max \left\{1, 2s\right\}, 0<s<1$ is thoroughly studied, see for instance \cite{Chen,Dip1,FelQ, FelW} and the references therein. 

The main idea of this work is as follows. Let $c_{d}$ be a critical value of $J_{d},$ which is defined in (\ref{critical1}). We use a positive solution $w$ of (\ref{P2})  to observe the asymptotic behaviour of $c_{d}$ as $d\downarrow 0.$ More specifically, $w$ is used to build a suitable function $\phi_d$ to compare $c_d$ with $\max_{t\geq 0} J_{d}(t \phi_{d}).$
In particular, we obtain an inequality $$c_{d}< \frac{d^{\frac{n}{2s}}}{2}F(w)$$ for $d$ sufficiently small, where $F$ is the functional associated with \eqref{P2}, defined in \eqref{energy2}.  
This is closely related to the location of maximum point of a solution $u_{d}$ of (\ref{P1}) on the boundary of  $\Omega .$ \\\\
Now, we summarise the above discussions in terms of the following three main theorems:
A priori it is known that for $1 \leq p < \frac{n+s}{n-s},$ any weak solution $u$ of (\ref{P1}) satisfies $$\norm{u}_{L^{\infty}(\Omega)} \leq K,$$ where $K>0$ is some constant depending on $\Omega, p$ and $d,$ see Theorem 3.1\cite{Mug2}. In next result we obtain a bound for least energy solution $u_{d}$ of (\ref{P1}) which is independent of $d.$

\begin{thm}\label{bound1}
	Let $u_{d}$ be the least energy solution of (\ref{P1}). Then 
	\begin{align}\label{bound1.1}
		d	\frac{c_{n,s}}{2} \int_{T(\Omega)} \frac{\abs{u_{d}(x)-u_{d}(y)}^{2}}{\abs{x-y}^{n+2s}}dxdy + 
		\int_{\Omega}u_{d}^{2}dx=& \int_{\Omega} u_{d}^{p+1}dx \leq C_{0}d^{\frac{n}{2s}},
	\end{align}
	where $C_{0}>0$ is some constant depending on $p.$
	Moreover, there is a constant $C_{1}>0$ depending only on $p$ and $\Omega$ such that \begin{align}\label{bound1.2}
		\sup_{\Omega}u_{d}(x)\leq C_{1}.
	\end{align}
\end{thm}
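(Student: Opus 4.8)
We prove the two assertions in turn. For \eqref{bound1.1}, observe that since $u_{d}$ is a critical point of $J_{d}$, testing $\langle J_{d}'(u_{d}),u_{d}\rangle=0$ and using $u_{d}>0$ produces the identity
\[
	d\,\frac{c_{n,s}}{2}\int_{T(\Omega)}\frac{\abs{u_{d}(x)-u_{d}(y)}^{2}}{\abs{x-y}^{n+2s}}\,dxdy+\int_{\Omega}u_{d}^{2}\,dx=\int_{\Omega}u_{d}^{p+1}\,dx .
\]
Feeding this into $J_{d}(u_{d})=c_{d}$ collapses the quadratic part and yields $\int_{\Omega}u_{d}^{p+1}\,dx=\frac{2(p+1)}{p-1}\,c_{d}$. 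Combining with the estimate $c_{d}=J_{d}(u_{d})\le C d^{n/2s}$ recalled above (Theorem 6.1 of \cite{Bar}, Theorem 1.1 of \cite{Chen1}) gives \eqref{bound1.1} with $C_{0}=\frac{2(p+1)}{p-1}C$.

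For \eqref{bound1.2} I would run a Moser iteration in which the power of $d$ is tracked explicitly. Since $u_{d}\in L^{\infty}(\Omega)$ a priori by Theorem 3.1 of \cite{Mug2}, for every $\beta\ge1$ the function $u_{d}^{2\beta-1}$ is an admissible test function in $H^{s}_{\Omega}$; plugging it into the weak formulation of \eqref{P1}, using the pointwise Stroock--Varopoulos inequality $(a-b)(a^{2\beta-1}-b^{2\beta-1})\ge\frac{2\beta-1}{\beta^{2}}(a^{\beta}-b^{\beta})^{2}$ for $a,b\ge0$ on $T(\Omega)$, and then H\"older with conjugate exponents $\frac{p+1}{p-1}$ and $\frac{p+1}{2}$, one gets
\[
	d\,\frac{2\beta-1}{\beta^{2}}\Big[\frac{c_{n,s}}{2}\int_{T(\Omega)}\frac{\abs{u_{d}^{\beta}(x)-u_{d}^{\beta}(y)}^{2}}{\abs{x-y}^{n+2s}}dxdy\Big]+\int_{\Omega}u_{d}^{2\beta}\,dx\le\int_{\Omega}u_{d}^{p-1}u_{d}^{2\beta}\,dx\le\norm{u_{d}}_{L^{p+1}(\Omega)}^{p-1}\norm{u_{d}^{\beta}}_{L^{p+1}(\Omega)}^{2}.
\]
Here the hypothesis on $p$ enters: in particular $p+1<2^{*}_{s}:=\frac{2n}{n-2s}$, hence $\sigma:=\frac{2^{*}_{s}}{p+1}>1$. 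Inserting this into the fractional Sobolev embedding $\norm{v}_{L^{2^{*}_{s}}(\Omega)}^{2}\le C_{S}\big(\norm{v}_{L^{2}(\Omega)}^{2}+\frac{c_{n,s}}{2}\int_{T(\Omega)}\frac{\abs{v(x)-v(y)}^{2}}{\abs{x-y}^{n+2s}}dxdy\big)$ of Theorem \ref{fembed} with $v=u_{d}^{\beta}$, and absorbing $\frac{\beta^{2}}{d(2\beta-1)}+1\le\frac{2\beta}{d}$ (valid for $d\le1$, $\beta\ge1$), one arrives, with $q=\beta(p+1)$, at the recursion
\[
	\norm{u_{d}}_{L^{\sigma q}(\Omega)}\le\Big(\frac{2C_{S}\beta}{d}\,\norm{u_{d}}_{L^{p+1}(\Omega)}^{p-1}\Big)^{1/(2\beta)}\norm{u_{d}}_{L^{q}(\Omega)} .
\]

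Starting from $q_{0}=p+1$, where \eqref{bound1.1} is available, I would iterate along $q_{k}=\sigma^{k}(p+1)$, $\beta_{k}=\sigma^{k}$, take logarithms, sum the convergent series $S_{1}:=\sum_{k\ge0}\frac{1}{2\sigma^{k}}=\frac{\sigma}{2(\sigma-1)}$ and $S_{2}:=\sum_{k\ge0}\frac{k}{2\sigma^{k}}=\frac{\sigma}{2(\sigma-1)^{2}}$, and insert $\norm{u_{d}}_{L^{p+1}(\Omega)}\le(C_{0}d^{n/2s})^{1/(p+1)}$ from \eqref{bound1.1}, to obtain
\[
	\log\norm{u_{d}}_{L^{\infty}(\Omega)}\le S_{1}\log(2C_{S})+S_{2}\log\sigma+\frac{S_{1}(p-1)+1}{p+1}\log C_{0}+\Big[-S_{1}+\frac{(S_{1}(p-1)+1)\,n}{2s(p+1)}\Big]\log d .
\]
The decisive point is the algebraic identity $S_{1}=\frac{\sigma}{2(\sigma-1)}=\frac{n}{\,2s(p+1)-n(p-1)\,}$, which makes the bracket multiplying $\log d$ vanish identically; the right-hand side is then a finite constant determined by $n,s,p$ and by $C_{S},C_{0}$, hence by $p$ and $\Omega$ alone, which is \eqref{bound1.2}. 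I expect the main obstacle to be exactly this bookkeeping of the $d$-powers across the iteration and the verification of the exact cancellation; the Stroock--Varopoulos inequality on $T(\Omega)$, the admissibility of the test functions (via \cite{Mug2}), and the limit $\norm{u_{d}}_{L^{q_{k}}(\Omega)}\to\norm{u_{d}}_{L^{\infty}(\Omega)}$ are comparatively routine.
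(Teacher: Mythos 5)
Your argument for the identity and the bound $\int_\Omega u_d^{p+1}\,dx \le C_0 d^{n/2s}$ is precisely the paper's: test $J_d'(u_d)=0$ with $u_d$, plug into $J_d(u_d)=c_d$, and invoke $c_d\le Cd^{n/2s}$.

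For the $L^\infty$ bound, both you and the paper run a Moser-type iteration starting from \eqref{bound1.1}, but the bookkeeping is genuinely different. The paper tests with $u_d^{2t-1}$, uses the elementary inequality $(x-y)(x^{2k-1}-y^{2k-1})\ge \frac{1}{k}(x^k-y^k)^2$, and then applies a \emph{rescaled} Sobolev inequality on $\Omega$ whose constant $A$ they verify (by the change of variable $y=x/d^{1/2s}$) is uniform for $d\in(0,d_0)$. Crucially they do not split the nonlinearity: they estimate $\int u_d^{p+2t-1}$ directly and arrange the exponents $L_j$ via $p-1+2L_{j+1}=2_s^*L_j$ so that the right-hand side at stage $j+1$ is exactly the left-hand side at stage $j$ raised to the power $2_s^*/2$. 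The $d$-independence then follows by proving $M_j\le e^{mL_{j-1}}$ and taking $(M_j d^{n/2s})^{1/(2_s^*L_{j-1})}\to e^{m/2_s^*}$.

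You instead peel off $\|u_d\|_{L^{p+1}}^{p-1}$ by H\"older with conjugate exponents $\frac{p+1}{p-1},\frac{p+1}{2}$, obtaining a clean one-step contraction $\|u_d\|_{L^{\sigma q}}\le(\tfrac{2C_S\beta}{d}\|u_d\|_{L^{p+1}}^{p-1})^{1/(2\beta)}\|u_d\|_{L^q}$ with $\sigma=2_s^*/(p+1)>1$ (which indeed holds because $p<\tfrac{n+s}{n-s}$ forces $p+1<\tfrac{2n}{n-s}<2_s^*$). You then take logarithms, sum the geometric series $S_1,S_2$, and check that the coefficient of $\log d$ is $-S_1+\tfrac{(S_1(p-1)+1)n}{2s(p+1)}$, which you show vanishes identically via $S_1=\tfrac{\sigma}{2(\sigma-1)}=\tfrac{n}{2s(p+1)-n(p-1)}$; I verified this algebra and it is correct (use $n(p-1)+D=2s(p+1)$ where $D=2s(p+1)-n(p-1)$). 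Both routes are valid. Your version makes the cancellation of $d$-powers transparent and uses the fixed Sobolev constant $C_S$ directly, at the cost of the extra H\"older step and the Stroock--Varopoulos inequality; the paper's version avoids H\"older but has to set up the $d$-uniform Sobolev constant $A$ by a rescaling argument and track $M_j$ separately. The admissibility of $u_d^{2\beta-1}$ as a test function and $\|u_d\|_{L^{q_k}}\to\|u_d\|_{L^\infty}$ are, as you say, routine once the a priori $L^\infty$ regularity of Theorem 3.1 in \cite{Mug2} is in hand.
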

In the next theorem, we show that the $L^r$-norm of the least energy solution $u_d$ is bounded by $d^{\frac{n}{2s}}$ times some constant independent of $d.$
\begin{thm}\label{lr-estimate}
	Let $u_{d}$ be the least energy solution of (\ref{P1}). Then  \begin{align}\label{lr1.1}
		b(r)d^{\frac{n}{2s}}\leq \int_{\Omega} u_{d}^{r}dx \leq B(r)d^{\frac{n}{2s}}, \,\,\, \text{if } 1\leq r \leq \infty. \\
		\label{lr1.4}   b(r)d^{\frac{n}{2s}}\leq \int_{\Omega} u_{d}^{r}dx \leq B(r)d^{\frac{nr}{2s}}, \,\,\, \text{if } 0<r<1,
	\end{align}
	where $b(r)$ and $B(r)$ are positive constants such that $b(r)<B(r)$ and are independent of $d.$
\end{thm}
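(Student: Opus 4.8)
\emph{Plan.} I would obtain the upper bounds quickly from Theorem~\ref{bound1} after integrating the equation (which destroys the nonlocal term), and reduce all the lower bounds — for every exponent at once — to the single energy estimate $c_d\gtrsim d^{n/2s}$, which is where the real work lies. For the upper bounds, note that constants lie in $H^s_\Omega$, so testing the weak form of \eqref{P1} with $v\equiv 1$ annihilates the Gagliardo bilinear term and leaves the identity $\int_\Omega u_d\,dx=\int_\Omega u_d^{\,p}\,dx$. Since $u_d^{\,p}\le\norm{u_d}_{L^\infty(\Omega)}^{\,p-1}u_d$ pointwise, this identity already forces $\norm{u_d}_{L^\infty(\Omega)}\ge 1$. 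To control $\int_\Omega u_d\,dx$ I would use the bounds $\int_\Omega u_d^2\,dx\le C_0 d^{n/2s}$ and $\norm{u_d}_{L^\infty(\Omega)}\le C_1$ of Theorem~\ref{bound1}: if $p\ge 2$ then $\int_\Omega u_d^{\,p}\,dx\le C_1^{\,p-2}\int_\Omega u_d^2\,dx$; if $1<p<2$, applying H\"older with exponents $\tfrac1{2-p},\tfrac1{p-1}$ to $u_d^{\,p}=u_d^{\,2-p}(u_d^2)^{p-1}$ yields $\int_\Omega u_d^{\,p}\,dx\le\bigl(\int_\Omega u_d\,dx\bigr)^{2-p}\bigl(\int_\Omega u_d^2\,dx\bigr)^{p-1}$, which together with the identity forces $\int_\Omega u_d\,dx\le\int_\Omega u_d^2\,dx$. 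In both cases $\int_\Omega u_d\,dx\le\widetilde C_0\,d^{n/2s}$. Consequently $\int_\Omega u_d^{\,r}\,dx\le\norm{u_d}_{L^\infty(\Omega)}^{\,r-1}\int_\Omega u_d\,dx\le C_1^{\,r-1}\widetilde C_0\,d^{n/2s}$ for $1\le r<\infty$, while H\"older against $\abs\Omega$ gives $\int_\Omega u_d^{\,r}\,dx\le\bigl(\int_\Omega u_d\,dx\bigr)^{r}\abs\Omega^{1-r}\le\widetilde C_0^{\,r}\abs\Omega^{1-r}\,d^{nr/2s}$ for $0<r<1$; the case $r=\infty$ is the bound \eqref{bound1.2} itself.

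\emph{The key lower estimate.} I would prove $\int_\Omega u_d^{\,p+1}\,dx=\norm{u_d}_d^2\ge c_\ast d^{n/2s}$ (equivalently, $c_d\gtrsim d^{n/2s}$) by a scale-explicit fractional Sobolev inequality. The substitution $u(x)=v(d^{-1/2s}x)$ turns $\norm{u}_d^2$ into $d^{n/2s}$ times the unscaled $H^s_{\Omega_d}$-norm of $v$ over the dilated domain $\Omega_d:=d^{-1/2s}\Omega$, and turns $\int_\Omega\abs u^{p+1}\,dx$ into $d^{n/2s}\int_{\Omega_d}\abs v^{p+1}\,dy$; since $p+1<2^\ast_s$, the subcritical embedding $H^s_{\Omega_d}\hookrightarrow L^{p+1}(\Omega_d)$ (cf.\ Theorem~\ref{fembed}) then gives
\[
\Bigl(\int_\Omega\abs u^{p+1}\,dx\Bigr)^{2/(p+1)}\ \le\ S_0\,d^{-\frac{n(p-1)}{2s(p+1)}}\,\norm{u}_d^2,\qquad u\in H^s_\Omega,\ \ 0<d\le 1 ,
\]
\emph{provided the embedding constants $S_{\Omega_d}$ remain bounded as $d\downarrow 0$}. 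Establishing this uniformity is the step I expect to be the main obstacle: the domains $\Omega_d$ only dilate, so their $C^{1,1}$ character does not deteriorate, and I would make it precise via an extension operator $H^s(\Omega_d)\to H^s(\R^n)$ whose norm is controlled in $d$, together with the scale-invariant critical fractional Sobolev inequality. Granting the displayed estimate and inserting $\norm{u_d}_d^2=\int_\Omega u_d^{\,p+1}\,dx$ gives $\bigl(\int_\Omega u_d^{\,p+1}\,dx\bigr)^{-(p-1)/(p+1)}\le S_0\,d^{-n(p-1)/(2s(p+1))}$, hence $\int_\Omega u_d^{\,p+1}\,dx\ge S_0^{-(p+1)/(p-1)}d^{n/2s}$; combining this with $\int_\Omega u_d^{\,p+1}\,dx\le\norm{u_d}_{L^\infty(\Omega)}^{\,p-1}\int_\Omega u_d^2\,dx$ also yields $\int_\Omega u_d^2\,dx\ge b_2\,d^{n/2s}$.

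\emph{Passing to all exponents.} From the $L^2$ lower bound I would reach every $r$ at once by a super-level-set argument. Put $\eps:=b_2/(2\widetilde C_0)$, which is independent of $d$. Since $u_d^2<\eps u_d$ on $\{u_d<\eps\}$, we have $\int_{\{u_d<\eps\}}u_d^2\,dx\le\eps\int_\Omega u_d\,dx\le\tfrac12 b_2\,d^{n/2s}$, whence $\int_{\{u_d\ge\eps\}}u_d^2\,dx\ge\tfrac12 b_2\,d^{n/2s}$; since this is at most $\norm{u_d}_{L^\infty(\Omega)}^2\,\abs{\{u_d\ge\eps\}}\le C_1^2\,\abs{\{u_d\ge\eps\}}$, we get $\abs{\{u_d\ge\eps\}}\ge\frac{b_2}{2C_1^2}\,d^{n/2s}=:c\,d^{n/2s}$. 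Hence, for every $r>0$,
\[
\int_\Omega u_d^{\,r}\,dx\ \ge\ \int_{\{u_d\ge\eps\}}u_d^{\,r}\,dx\ \ge\ \eps^{\,r}\,\abs{\{u_d\ge\eps\}}\ \ge\ \eps^{\,r}c\,d^{n/2s},
\]
which provides the lower halves of \eqref{lr1.1} and \eqref{lr1.4} at once (for $r=\infty$ one may instead just use $\norm{u_d}_{L^\infty(\Omega)}\ge 1\ge d^{n/2s}$). Apart from the scaled Sobolev inequality — the only ingredient that genuinely uses the nonlocal Neumann condition and the geometry of $\Omega$ — the proof rests only on H\"older's inequality, the Nehari identity $\norm{u_d}_d^2=\int_\Omega u_d^{\,p+1}\,dx$, and the vanishing of the bilinear form on constants.
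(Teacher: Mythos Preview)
Your argument is correct and in several places cleaner than the paper's. For the upper bounds the paper does not use the $L^\infty$ bound directly; instead it revisits the Moser-iteration sequence $\{L_j\}$ from the proof of Theorem~\ref{bound1} (namely \eqref{bound1.10}) and interpolates between consecutive exponents $2_s^\ast L_{j-1}$ and $2_s^\ast L_j$, then separately between $2$ and $2_s^\ast$, and finally handles $1\le r<p+1$ via the identity $\int_\Omega u_d=\int_\Omega u_d^p$ together with $\int_\Omega u_d^p\le\int_\Omega u_d^{p+1}$. Your route---bound $\int_\Omega u_d$ first and then use $\int_\Omega u_d^{\,r}\le\norm{u_d}_{L^\infty}^{\,r-1}\int_\Omega u_d$---avoids all the case splitting and uses only the \emph{statements} of Theorem~\ref{bound1}.

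For the key lower bound $\int_\Omega u_d^{\,p+1}\gtrsim d^{n/2s}$, the paper argues by contradiction (Proposition~\ref{bound2}): if $d_k^{-n/2s}\norm{u_k}_{d_k}^2\to 0$, rerun the Moser iteration with this small quantity in place of $C_0$ to force $\norm{u_k}_{L^\infty}\to 0$, contradicting $M_d>1$ (Lemma~\ref{sup1}). Your direct argument via the scaled Sobolev inequality is equivalent in spirit but shorter, and the uniformity of the embedding constant that you flag as the main obstacle is in fact already established inside the proof of Theorem~\ref{bound1} (see \eqref{bound1.5} and the rescaling computation immediately following it, which shows $A$ is uniform for $d\in(0,d_0)$; interpolate with the $L^2$ part of the norm to pass from $2_s^\ast$ to $p+1$). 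Finally, the paper propagates the lower bound to all $r$ by splitting at $r=p+1$ and using $\int u_d^{p+1}\le C_1^{\,p+1-r}\int u_d^r$ for $r<p+1$ and H\"older interpolation for $r>p+1$; your super-level-set argument treats all exponents simultaneously and is neater.
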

 We show the asymptotic behaviour in next theorem.
 \begin{thm}\label{dist}
		Let $\Omega \subset \mathbb{R}^n$ be a bounded domain of class $C^{1,1}.$ 
		Let $u_{d}$ be the least energy solution of (\ref{P1}). If $u_{d}$ achieves maximum at a point $z_{d} \in \overline{\Omega},$ then for all $d$ sufficiently small, we have the following:
		\begin{enumerate}
			\item[(A)] There exists a positive constant $K_{*}$ such that $\rho (z_{d}, \partial \Omega) \leq K_{*}d^{\frac{1}{2s}} .$
			Here, by $\rho $ we mean the distance between $z_{d}$ and $\partial \Omega.$
			\item[(B)] $z_{d} \in \partial \Omega .$\\
		\end{enumerate}
\end{thm}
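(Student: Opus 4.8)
The plan is to deduce Theorem~\ref{dist} from the energy estimate $c_d < \tfrac{d^{n/2s}}{2}F(w)$ together with Theorems~\ref{bound1} and~\ref{lr-estimate}, by a contradiction argument of Ni--Takagi type. First, for part (A), I would argue by contradiction: suppose along a sequence $d=d_j\downarrow 0$ the maximum points $z_d$ satisfy $\rho(z_d,\partial\Omega)/d^{1/2s}\to\infty$. The idea is that on a ball $B(z_d,R\,d^{1/2s})$ entirely contained in $\Omega$ (for $d$ small, since $\rho(z_d,\partial\Omega)$ is much larger than $d^{1/2s}$), the rescaled function
\begin{align*}
	v_d(x) := u_d\big(z_d + d^{1/2s}x\big)
\end{align*}
approximately solves the fractional Schr\"odinger equation $(-\Delta)^s v + v = v^p$ on a large ball. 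Using the uniform $L^\infty$-bound \eqref{bound1.2} and interior regularity for the fractional Laplacian, $v_d$ is bounded in $C^{2s+\alpha}_{loc}$, so up to a subsequence $v_d \to v_\infty$ locally uniformly, where $v_\infty \geq 0$ solves $(-\Delta)^s v_\infty + v_\infty = v_\infty^p$ in $\mathbb{R}^n$ with $v_\infty(0)=\sup v_\infty > 0$ (the positivity of $\sup v_\infty$ comes from a lower bound on $u_d(z_d)$, which follows from the $L^r$ lower bounds in \eqref{lr1.1} once one knows $u_d$ does not decay everywhere). Hence $v_\infty$ is a nontrivial positive solution, and $F(v_\infty)\geq F(w)$ since $w$ is the least energy solution of \eqref{P2}. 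Then a change of variables and a localization/cutoff estimate give
\begin{align*}
	c_d = J_d(u_d) \geq \tfrac{d^{n/2s}}{2}\big(F(v_\infty) + o(1)\big) \geq \tfrac{d^{n/2s}}{2}\big(F(w)+o(1)\big),
\end{align*}
contradicting the strict upper bound $c_d < \tfrac{d^{n/2s}}{2}F(w)$ for $d$ small. This yields $\rho(z_d,\partial\Omega)\leq K_* d^{1/2s}$.

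For part (B), I would push the same blow-up analysis one step further. Assuming $z_d\in\overline\Omega$ but $z_d\notin\partial\Omega$ for a sequence $d\downarrow 0$, part (A) already forces $\rho(z_d,\partial\Omega) = O(d^{1/2s})$, so after rescaling by $d^{1/2s}$ and translating, the domain $\Omega$ becomes, in the limit, a half-space $\mathbb{R}^n_+$ (using the $C^{1,1}$ regularity of $\partial\Omega$ to flatten the boundary), and the Neumann condition $\mathcal{N}_s u_d=0$ passes to the limit as the corresponding nonlocal Neumann condition on the half-space. The rescaled solutions then converge to a positive solution $v_\infty$ of the fractional Schr\"odinger--Neumann problem on $\mathbb{R}^n_+$ whose maximum is attained at an interior point of $\mathbb{R}^n_+$ (the image of $z_d$, which stays at bounded distance from the limiting hyperplane but does not reach it). By an even-reflection argument, such a $v_\infty$ extends to a solution on all of $\mathbb{R}^n$, and comparing energies one gets $F(v_\infty)$ on the half-space equals $\tfrac12 F$ of the reflected solution, which is $\geq \tfrac12 F(w)$; but the reflected solution has an interior maximum while being even, which one argues is energetically more expensive than the ground state centered on the boundary — more precisely, one shows the half-space energy is strictly larger than $\tfrac12 F(w)$ unless the maximum sits on $\partial\mathbb{R}^n_+$, again contradicting $c_d < \tfrac{d^{n/2s}}{2}F(w)$. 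Hence $z_d\in\partial\Omega$.

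The main obstacle I expect is making the convergence of the rescaled solutions rigorous in the nonlocal setting: unlike the local case, the equation at $z_d+d^{1/2s}x$ involves an integral over all of $\mathbb{R}^n$, so one must carefully control the "tail" contributions $\int_{|y|>R}$ after rescaling, using the uniform $L^\infty$ bound from Theorem~\ref{bound1} and the $L^r$-decay estimates from Theorem~\ref{lr-estimate} to show the tails vanish in the limit. A second delicate point is the lower energy bound: one needs a sharp localization inequality showing that the energy of $u_d$ concentrated near $z_d$ captures, to leading order in $d^{n/2s}$, the full energy $F$ of the limit profile, which requires the decay estimates on $u_d$ away from its maximum (again from \eqref{lr1.1}--\eqref{lr1.4}) to discard the complementary region. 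Finally, for part (B), identifying the limiting Neumann problem on the half-space and establishing the strict energy inequality that separates boundary-centered from interior-centered profiles is the crux; I would handle it via the reflection trick and the characterization of $w$ as the unique (up to translation) least energy solution of \eqref{P2}.
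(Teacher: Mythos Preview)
Your overall strategy for (A)---blow up around $z_d$, extract a limit profile solving \eqref{P2}, and compare energies---is the paper's, but the factor $\tfrac12$ in your lower bound is wrong and kills the contradiction. When $\rho(z_d,\partial\Omega)/d^{1/2s}\to\infty$ the entire ball $B(z_d,R\,d^{1/2s})$ lies in $\Omega$, so the rescaled integral recovers the \emph{full} energy of the limit: using $c_d=(\tfrac12-\tfrac{1}{p+1})\int_\Omega u_d^{p+1}$ one gets
\[
c_{d_j}\ \ge\ d_j^{n/2s}\Big(\tfrac12-\tfrac{1}{p+1}\Big)\int_{B_R}\phi_j^{p+1}\ =\ d_j^{n/2s}\bigl(F(v_\infty)-\epsilon_R\bigr),
\]
with \emph{no} $\tfrac12$ in front of $F(v_\infty)$. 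Since $F(v_\infty)\ge F(w)>\tfrac12 F(w)$, this contradicts $c_{d_j}<\tfrac{d_j^{n/2s}}{2}F(w)+o(1)$. With your stated bound $c_d\ge \tfrac{d^{n/2s}}{2}(F(w)+o(1))$ you are merely comparing $\tfrac12 F(w)+o(1)$ against itself, and no contradiction follows. (A minor point: $v_\infty(0)\ge 1$ is immediate from the identity $\int_\Omega u_d=\int_\Omega u_d^p$, which forces $\sup_\Omega u_d>1$; the $L^r$ estimates are not needed for this.)

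For (B) your route diverges from the paper's and the key step is unjustified. The paper does \emph{not} pass to a half-space Neumann limit and argue energetically. It reflects $u_{d_j}$ evenly across the tangent hyperplane $\{x_n=0\}$, rescales about $z_j$, and shows the limit is a radial solution $v$ of \eqref{P2} centered at $0$; then a purely local $C^2$-perturbation lemma (a $C^2$ function close to a strictly concave radial profile, with a critical point at the origin, has no other critical point nearby) forces the rescaled reflected function $\psi_j$ to have a \emph{unique} local maximum in a fixed ball for large $j$. But if $z_j$ were strictly interior, its mirror image under the reflection would produce a second maximum of $\psi_j$ at $(0,-2\alpha_j)\neq 0$---a contradiction. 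Your alternative, ``the half-space energy strictly exceeds $\tfrac12 F(w)$ when the maximum is interior'', is exactly the delicate statement you leave open: after even reflection one only gets $F(\text{reflected})\ge F(w)$, and ruling out equality would require uniqueness of the ground state and an argument that an even two-peak profile cannot be a translate of $w$; moreover, for the nonlocal problem the half-space Neumann energy is not simply half of the full-space $F$ of the reflection, because of the cross term $\int_{\Omega}\int_{\mathcal{C}\Omega}$ in $J_d$. The paper's critical-point-counting argument avoids all of this.
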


The plan of the paper is as follows. In Section 2, we recollect known results which are useful for our analysis. In Section 3, we study the regularity of least energy solution of (\ref{P1}) and complete the proof of Theorem \ref{bound1}. In Section 4, we have derived $L^{r}$-estimate for the least energy solutions of (\ref{P1}). Section 5 is devoted to the proof of Theorem \ref{dist}.The proof of inequality (\ref{bound1.3}) (see, next) is a part of Appendix A.
 
\section{Auxiliary Results}
Let us recall the important results which are used in this paper. 
\begin{thm}\label{fembed}(Fractional Sobolev Embedding \cite{Di})  Let $n>2s$ and $2_{s}^{*}=\frac{2n}{n-2s}$ be 
the fractional critical exponent. Then, we have the following inclusions:
\begin{enumerate}
\item for any function $u\in C_{0}(\mathbb{R}^{n})$ and for $q\in [0, 2_{s}^{*}-1]:$ $$ 
\norm{u}_{L^{q+1}(\mathbb{R}^{n})}^{2} \leq B(n,s)\int_{\mathbb{R}^{n}}\int_{\mathbb{R}^{n}}
\frac{\abs{u(x)-u(y)}^{2}}{\abs{x-y}^{n+2s}}dxdy, $$
for some positive constant $B.$ That means $H^{s}(\mathbb{R}^{n})$ is continuously embedded 
in $L^{q+1}(\mathbb{R}^{n}).$
\item Let $\Omega \subset \mathbb{R}^{n}$ be a bounded extension domain for $H^{s}(\Omega).$
Then, the space $H^{s}(\Omega)$ is continuously embedded in $L^{q+1}(\Omega)$ for any $q\in [0, 2_{s}^{*}-1],$ i.e, 
$$ \norm{u}_{L^{q+1}(\Omega)}^{2} \leq B(n,s,\Omega)\norm{u}_{H^{s}(\Omega)}^{2}$$
for some positive constant $B.$
Further, the above embedding is compact for any $q\in [0, 2_{s}^{*}-1).$
\end{enumerate}
\end{thm}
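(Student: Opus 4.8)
\textit{Proof proposal.} The plan is to reduce Theorem~\ref{fembed} to two standard facts and obtain everything else by H\"older interpolation and a compactness criterion. Write $[u]_{s}^{2}:=\int_{\mathbb{R}^{n}}\int_{\mathbb{R}^{n}}\frac{\abs{u(x)-u(y)}^{2}}{\abs{x-y}^{n+2s}}\,dx\,dy$ for the Gagliardo seminorm and $2_{s}^{*}=\frac{2n}{n-2s}$ for the critical exponent. The two facts are: (i) the scale-invariant fractional Sobolev inequality $\norm{u}_{L^{2_{s}^{*}}(\mathbb{R}^{n})}\leq C(n,s)\,[u]_{s}$ on $\mathbb{R}^{n}$, which is the endpoint $q=2_{s}^{*}-1$ of part~(1); and (ii) the standing hypothesis that $\Omega$ is an extension domain, i.e.\ there is a bounded linear $E\colon H^{s}(\Omega)\to H^{s}(\mathbb{R}^{n})$ with $(Eu)|_{\Omega}=u$ and $\norm{Eu}_{H^{s}(\mathbb{R}^{n})}\leq C_{\Omega}\norm{u}_{H^{s}(\Omega)}$.

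For (i) I would first reduce by density to $u\in C_{c}^{\infty}(\mathbb{R}^{n})$ and then use the Fourier identity $[u]_{s}^{2}=\kappa(n,s)\int_{\mathbb{R}^{n}}\abs{\xi}^{2s}\abs{\widehat{u}(\xi)}^{2}\,d\xi=\kappa(n,s)\norm{(-\Delta)^{s/2}u}_{L^{2}(\mathbb{R}^{n})}^{2}$. Setting $g:=(-\Delta)^{s/2}u\in L^{2}(\mathbb{R}^{n})$, so that $u=I_{s}g$ is the Riesz potential of order $s$ of $g$, the Hardy--Littlewood--Sobolev inequality gives $I_{s}\colon L^{2}(\mathbb{R}^{n})\to L^{2_{s}^{*}}(\mathbb{R}^{n})$ boundedly --- the relation $\frac{1}{2_{s}^{*}}=\frac{1}{2}-\frac{s}{n}$ is built into the definition of $2_{s}^{*}$ and $n>2s$ makes $2_{s}^{*}$ finite, so HLS applies --- whence $\norm{u}_{L^{2_{s}^{*}}}\leq C\norm{g}_{L^{2}}=C\kappa^{-1/2}[u]_{s}$. (A Fourier-free alternative is the dyadic decomposition of $u$ in amplitude, bounding the measure of each super-level set by $[u]_{s}^{2}$ and summing, as in \cite{Di}.) The remaining cases of part~(1) follow: for $q+1\in[2,2_{s}^{*}]$, H\"older interpolation between $L^{2}$ and $L^{2_{s}^{*}}$ gives $\norm{u}_{L^{q+1}(\mathbb{R}^{n})}\leq\norm{u}_{L^{2}}^{\theta}\norm{u}_{L^{2_{s}^{*}}}^{1-\theta}\leq C\norm{u}_{H^{s}(\mathbb{R}^{n})}$, and for $q+1<2$ one uses H\"older on a set of finite measure; this is the point where, away from the endpoint, the bound must be read with the full norm $\norm{u}_{H^{s}}^{2}=\norm{u}_{L^{2}}^{2}+[u]_{s}^{2}$ (only $q+1=2_{s}^{*}$ is scale-consistent with the seminorm alone), which is harmless here since the paper only invokes part~(2). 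Part~(2) is then immediate: for $u\in H^{s}(\Omega)$ and $q\in[0,2_{s}^{*}-1]$,
\[
\norm{u}_{L^{q+1}(\Omega)}\leq\norm{Eu}_{L^{q+1}(\mathbb{R}^{n})}\leq C\norm{Eu}_{H^{s}(\mathbb{R}^{n})}\leq CC_{\Omega}\norm{u}_{H^{s}(\Omega)}.
\]

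For the compactness in part~(2) (case $q<2_{s}^{*}-1$) I would first prove $H^{s}(\Omega)\hookrightarrow L^{2}(\Omega)$ is compact and then bootstrap. Given $(u_{k})$ bounded in $H^{s}(\Omega)$, extend to $(Eu_{k})$, bounded in $H^{s}(\mathbb{R}^{n})$, and multiply by a fixed cutoff $\eta\in C_{c}^{\infty}(\mathbb{R}^{n})$ with $\eta\equiv1$ on $\overline{\Omega}$: then $(\eta Eu_{k})$ is bounded in $H^{s}(\mathbb{R}^{n})$, supported in a common ball, and equicontinuous under $L^{2}$-translation because $\norm{v(\cdot+h)-v}_{L^{2}(\mathbb{R}^{n})}^{2}=\int\abs{e^{ih\cdot\xi}-1}^{2}\abs{\widehat{v}(\xi)}^{2}\,d\xi\leq C\abs{h}^{2s}\norm{v}_{H^{s}(\mathbb{R}^{n})}^{2}$ (split the frequency integral at $\abs{\xi}\sim\abs{h}^{-1}$, or derive this directly from the Gagliardo double integral). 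By the Fr\'echet--Kolmogorov criterion $(\eta Eu_{k})$, hence $(u_{k})$, is precompact in $L^{2}$; pass to a subsequence $u_{k_{j}}\to u$ in $L^{2}(\Omega)$. For $q+1\in(2,2_{s}^{*})$ the interpolation $\norm{u_{k_{j}}-u}_{L^{q+1}(\Omega)}\leq\norm{u_{k_{j}}-u}_{L^{2}(\Omega)}^{\theta}\norm{u_{k_{j}}-u}_{L^{2_{s}^{*}}(\Omega)}^{1-\theta}$, with the last factor bounded by the continuous embedding already proved (the limit $u$ lying in $H^{s}(\Omega)$ by weak compactness), forces $u_{k_{j}}\to u$ in $L^{q+1}(\Omega)$; for $q+1\leq2$ one simply uses $\norm{\cdot}_{L^{q+1}(\Omega)}\leq\abs{\Omega}^{1/(q+1)-1/2}\norm{\cdot}_{L^{2}(\Omega)}$.

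The one genuinely non-elementary input is fact (i), the critical fractional Sobolev inequality, which I expect to be the main work --- either via Hardy--Littlewood--Sobolev plus the Fourier representation of $[u]_{s}^{2}$, or via the amplitude-decomposition argument. A close second is the translation-equicontinuity estimate behind the Fr\'echet--Kolmogorov step. Everything else --- density of $C_{c}^{\infty}$, H\"older interpolation, the extension operator (which we are handed as a hypothesis on $\Omega$), and the assembly of compactness from $L^{2}$-compactness together with a uniform higher-integrability bound --- is routine.
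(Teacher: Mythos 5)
The paper gives no proof of this statement; it is recalled from the Hitchhiker's guide \cite{Di} as a background result. Your argument is correct and tracks the standard treatment there: \cite{Di} proves the critical inequality by the dyadic-in-amplitude decomposition you mention as the Fourier-free alternative, while your primary route via the identity $u = I_s(-\Delta)^{s/2}u$ and Hardy--Littlewood--Sobolev is the other classical proof and is fully equivalent in the Hilbertian case $p=2$; the compactness in \cite{Di} is likewise obtained via a cutoff and a Fr\'echet--Kolmogorov argument of exactly the shape you outline. Your scaling remark deserves emphasis: the inequality in part~(1), with only the Gagliardo seminorm on the right, can hold uniformly in $u$ only at the critical exponent $q+1=2_s^*$, since replacing $u$ by $u(\lambda\,\cdot)$ changes the two sides by different powers of $\lambda$ otherwise; for subcritical $q$ the right-hand side must be read as the full $H^s(\mathbb{R}^n)$-norm. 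Part~(2) and everything the paper actually uses is stated with the full norm, so the looseness is harmless downstream, but you are right to flag it.
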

Let $T(\Omega):=\mathbb{R}^{2n}\setminus (\mathbb{R}^{n}\setminus \Omega)^{2}$ be a cross-shaped set on a bounded domain $\Omega \subset \mathbb{R}^n$. Define
\begin{equation}\label{space}
H_{\Omega}^{s}:=\left\{ u: \mathbb{R}^{n}\longrightarrow \mathbb{R}~\text{measurable}: {\norm {u}}_{H_{\Omega}^{s}}< \infty \right\}
\end{equation}
which is equipped with the norm
\begin{equation}
\norm{u}_{H_{\Omega}^{s}}:=\biggl(\norm{u}_{L^{2}
(\Omega)}^{2}+\int_{T(\Omega)}\frac{{\lvert u(x)-u(y)\rvert}^{2}}{{\lvert x-y \rvert}^{n+2s}}dxdy \biggr)^{\frac{1}{2}}.
\end{equation}
\begin{rem}
$H_{\Omega}^{s}$ is a Hilbert space (see \cite{Dip}, Proposition 3.1).
\end{rem}
Let us define the following set:
$$\mathcal{L}_{s}:= \left\{u: \mathbb{R}^{n}\longrightarrow \mathbb{R}~\text{measurable}: \int_{\mathbb{R}^n} \frac{\abs{u(x)}}{1+\abs{x}^{n+2s}}dx < \infty  \right\}.$$ The condition $u \in \mathcal{L}_{s}$ is useful to give a sense to pointwise definition of fractional Laplacian \ref{pointwisedefn}. 
\begin{lem} \label{lessregular}(Lemma 2.3 \cite{Cin})
Let $\Omega \subset \mathbb{R}^n$ be a bounded set. Then $H^{s}_{\Omega} \subset \mathcal{L}_{s}.$
\end{lem}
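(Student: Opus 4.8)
The plan is to show that the finiteness of $\norm{u}_{H^s_\Omega}$ forces the weighted integral $\int_{\mathbb{R}^n}\frac{\abs{u(x)}}{1+\abs{x}^{n+2s}}\,dx$ to converge, by splitting $\mathbb{R}^n$ into the bounded set $\Omega$ and its complement and treating each piece separately. Since $\Omega$ is bounded, fix $R>0$ with $\Omega\subset B_R(0)$ and fix also a ball $B_{R_0}(x_0)\subset\Omega$ of positive radius contained in the interior (if $\Omega$ has nonempty interior; otherwise $H^s_\Omega$ reduces essentially to $L^2(\Omega)$ modulo the Gagliardo term and the estimate is trivial).

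First I would handle the contribution of $\Omega$ itself. On $\Omega$ we have $\frac{1}{1+\abs{x}^{n+2s}}\le 1$, so $\int_\Omega\frac{\abs{u(x)}}{1+\abs{x}^{n+2s}}\,dx\le\int_\Omega\abs{u(x)}\,dx$, and since $\Omega$ is bounded, by Cauchy--Schwarz this is at most $\abs{\Omega}^{1/2}\norm{u}_{L^2(\Omega)}\le\abs{\Omega}^{1/2}\norm{u}_{H^s_\Omega}<\infty$. So the local part is controlled purely by the $L^2(\Omega)$-component of the norm.

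The main work is the tail $\int_{\mathcal{C}\Omega}\frac{\abs{u(x)}}{1+\abs{x}^{n+2s}}\,dx$, where a priori $u$ is only measurable off $\Omega$ and need not be integrable; here the Gagliardo-type double integral over $T(\Omega)$ must be used. The key observation is that $T(\Omega)=\mathbb{R}^{2n}\setminus(\mathcal{C}\Omega)^2$ contains the product $(\mathcal{C}\Omega)\times B_{R_0}(x_0)$, so the finite quantity $\int_{T(\Omega)}\frac{\abs{u(x)-u(y)}^2}{\abs{x-y}^{n+2s}}\,dx\,dy$ dominates $\int_{\mathcal{C}\Omega}\int_{B_{R_0}(x_0)}\frac{\abs{u(x)-u(y)}^2}{\abs{x-y}^{n+2s}}\,dy\,dx$. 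For $x\in\mathcal{C}\Omega$ and $y\in B_{R_0}(x_0)$ one has $\abs{x-y}\le \abs{x}+C$ for a constant $C$ depending on $R,R_0,x_0$, hence $\abs{x-y}^{-(n+2s)}\ge c(1+\abs{x})^{-(n+2s)}$ for some $c>0$ and all such $x,y$. Therefore
\begin{align*}
\int_{T(\Omega)}\frac{\abs{u(x)-u(y)}^2}{\abs{x-y}^{n+2s}}\,dx\,dy
\;\ge\; c\int_{\mathcal{C}\Omega}\frac{1}{(1+\abs{x})^{n+2s}}\Big(\int_{B_{R_0}(x_0)}\abs{u(x)-u(y)}^2\,dy\Big)dx.
\end{align*}
By the elementary inequality $\abs{u(x)}^2\le 2\abs{u(x)-u(y)}^2+2\abs{u(y)}^2$, integrating in $y$ over $B_{R_0}(x_0)$ gives $\abs{B_{R_0}}\,\abs{u(x)}^2\le 2\int_{B_{R_0}(x_0)}\abs{u(x)-u(y)}^2\,dy+2\norm{u}_{L^2(B_{R_0}(x_0))}^2\le 2\int_{B_{R_0}(x_0)}\abs{u(x)-u(y)}^2\,dy+2\norm{u}_{L^2(\Omega)}^2$. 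Plugging this in, and using $\int_{\mathcal{C}\Omega}(1+\abs{x})^{-(n+2s)}\,dx<\infty$ (which holds since $n+2s>n$), we obtain
\begin{align*}
\int_{\mathcal{C}\Omega}\frac{\abs{u(x)}^2}{(1+\abs{x})^{n+2s}}\,dx \;\le\; C\big(\norm{u}_{H^s_\Omega}^2+\norm{u}_{L^2(\Omega)}^2\big)\;<\;\infty.
\end{align*}
Finally, to pass from this $L^2$-weighted bound to the $L^1$-weighted bound in the definition of $\mathcal{L}_s$, write $\frac{\abs{u(x)}}{1+\abs{x}^{n+2s}}=\frac{\abs{u(x)}}{(1+\abs{x})^{(n+2s)/2}}\cdot\frac{1}{(1+\abs{x})^{(n+2s)/2}}\cdot\frac{(1+\abs{x})^{n+2s}}{1+\abs{x}^{n+2s}}$ and apply Cauchy--Schwarz over $\mathcal{C}\Omega$; the first factor is in $L^2(\mathcal{C}\Omega)$ by the bound just derived, the second is in $L^2(\mathcal{C}\Omega)$ since $n+2s>n$, and the last factor is bounded above and below. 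Combining with the $\Omega$-estimate yields $u\in\mathcal{L}_s$, completing the proof.

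The only genuine obstacle is the tail estimate: one needs to exploit that $T(\Omega)$ is \emph{cross-shaped} (it contains $\mathcal{C}\Omega\times\Omega$ and $\Omega\times\mathcal{C}\Omega$, not merely $\Omega\times\Omega$) so that the Gagliardo seminorm over $T(\Omega)$ sees the behaviour of $u$ at infinity; pairing far-away points $x$ with a fixed interior ball and using the comparison $\abs{x-y}\asymp 1+\abs{x}$ is the crux. If $\Omega$ has empty interior the statement is vacuous or immediate, so one may assume $\mathrm{int}\,\Omega\neq\emptyset$ at the outset.
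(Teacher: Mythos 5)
The paper states this lemma without proof, citing Lemma 2.3 of \cite{Cin}; your argument is correct and is essentially the standard proof from that reference: split the weighted integral over $\Omega$ and $\mathcal{C}\Omega$, bound the first piece by Cauchy--Schwarz using the $L^{2}(\Omega)$ part of the norm, and control the tail by pairing each $x\in\mathcal{C}\Omega$ with $y$ ranging over a fixed bounded positive-measure subset of $\Omega$, so that $\abs{x-y}^{-(n+2s)}\gtrsim(1+\abs{x})^{-(n+2s)}$ and the Gagliardo integral over $T(\Omega)\supset\mathcal{C}\Omega\times\Omega$ together with the integrability of $(1+\abs{x})^{-(n+2s)}$ yields the weighted $L^{2}$ (hence, by another Cauchy--Schwarz, weighted $L^{1}$) bound at infinity. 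The only cosmetic point is that you may integrate $y$ over all of $\Omega$ rather than an interior ball --- any subset of positive measure works --- which removes the need for the aside about empty interior (and in this paper $\Omega$ is a $C^{1,1}$ domain in any case).
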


Next, we recall a few known results about fractional Schr\"{o}dinger equation (\ref{P2}).
\begin{defn}
A measurable function $u: \mathbb{R}^{n} \longrightarrow \mathbb{R}$ is called a weak solution of (\ref{P2}) if it satisfies the following equation
\begin{align*}
	\frac{c_{n,s}}{2} \int_{\mathbb{R}^{n}}\int_{\mathbb{R}^{n}} \frac{(u(x)-u(y))(\psi(x)-\psi(y))}{\abs{x-y}^{n+2s}}dxdy + \int_{\mathbb{R}^{n}}u(x)\psi(x)dx = \int_{\mathbb{R}^{n}} \abs{u(x)}^{p-1}u(x)\psi(x)dx,
\end{align*}
for all $\psi \in C_{0}^{1}(\mathbb{R}^{n}).$
\end{defn}
We define the corresponding energy functional $F: H^{s}(\mathbb{R}^n) \longrightarrow \mathbb{R} $ as follows:
\begin{align}\label{energy2}
F(u):=\frac{1}{2} \Big[ \frac{c_{n,s}}{2} \int_{\mathbb{R}^{n}}\int_{\mathbb{R}^{n}} \frac{\abs{u(x)-u(y)}^{2}}{\abs{x-y}^{n+2s}}dxdy + 
 \int_{\mathbb{R}^{n}}u^{2}dx \Big] -\frac{1}{p+1}\int_{\mathbb{R}^{n}}|u|^{p+1}dx.
\end{align}
The weak solutions of (\ref{P2}) corresponds to the critical points of $F.$ 
\begin{defn}
A function $u \in \mathcal{L}_{s}(\mathbb{R}^{n}) \cap C^{2s+\epsilon}(\mathbb{R}^n),$ when $0<s<\frac{1}{2},$ $2s+\epsilon<1$ or $u \in C^{1, 2s+\epsilon-1}(\mathbb{R}^n) \cap \mathcal{L}_{s}(\mathbb{R}^{n}),$ when $\frac{1}{2}\leq s<1,$ $2s+\epsilon-1<1 $ is said to be a classical solution of (\ref{P2}) if it satisfies the equation (\ref{P2}) pointwise in $\mathbb{R}^{n}.$ 
\end{defn}
Next result gives us a positive, radially symmetric solution of (\ref{P2}), which decays at infinity.
\begin{thm}(Theorem 3.4 \cite{FelQ}) \label{radial5}
Let $u$ be a weak solution of (\ref{P2}). Then $u \in L^{q}(\mathbb{R}^{n}) \cap C^{\alpha}(\mathbb{R}^{n})$ for some $q \in [2, \infty) $ and $\alpha \in (0,1).$ Moreover, 
\begin{align*}
	\lim_{\abs{x} \rightarrow \infty}u(x)=0.
\end{align*}
\end{thm}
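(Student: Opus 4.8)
The plan is a three-stage argument. First I would bootstrap the integrability of $u$, starting from the fact that a weak solution lies in the energy space $H^s(\mathbb R^n)$ (the domain of $F$ in \eqref{energy2}), up to $u\in L^q(\mathbb R^n)$ for every $q\in[2,\infty)$ and in fact $u\in L^\infty(\mathbb R^n)$. Second, I would upgrade this to global Hölder continuity with a uniformly bounded seminorm, using the interior regularity theory for $(-\Delta)^s$. Third, I would combine global integrability with uniform Hölder continuity to conclude that $u(x)\to0$ as $|x|\to\infty$.

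\emph{Stage 1 (integrability).} Since $u\in H^s(\mathbb R^n)$, Theorem \ref{fembed}(1) gives $u\in L^{2^*_s}(\mathbb R^n)$, where $2^*_s=\tfrac{2n}{n-2s}$. Rewrite \eqref{P2} as $u=\mathcal K\bigl(|u|^{p-1}u\bigr)$, with $\mathcal K:=(1+(-\Delta)^s)^{-1}$, i.e.\ convolution with a nonnegative, $L^1$-normalised Bessel-type kernel; $\mathcal K$ enjoys the Hardy--Littlewood--Sobolev-type mapping properties $\mathcal K:L^r(\mathbb R^n)\to L^q(\mathbb R^n)$ whenever $\tfrac1q>\tfrac1r-\tfrac{2s}{n}>0$, and $\mathcal K:L^r(\mathbb R^n)\to L^\infty(\mathbb R^n)$ when $r>\tfrac{n}{2s}$. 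Thus from $u\in L^{q_k}$ one gets $|u|^{p-1}u\in L^{q_k/p}$ and then $u\in L^{q_{k+1}}$ for any $q_{k+1}$ with $\tfrac1{q_{k+1}}$ slightly above $\tfrac{p}{q_k}-\tfrac{2s}{n}$. Starting from $q_0=2^*_s$, the strict subcriticality $p<\tfrac{n+2s}{n-2s}$ is precisely what yields $q_1>q_0$; one checks that $(q_k)$ stays increasing and that the increments $\tfrac1{q_k}-\tfrac1{q_{k+1}}$ are bounded below by a fixed positive constant, so after finitely many steps $\tfrac{p}{q_k}\le\tfrac{2s}{n}$ and $u\in L^\infty(\mathbb R^n)$, having passed through $L^q(\mathbb R^n)$ for every $q\in[2,\infty)$. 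Alternatively one can run a nonlocal Moser/Brezis--Kato iteration: test the weak formulation with $\varphi=u\,\min\{|u|,M\}^{2(\beta-1)}$, use a convexity inequality for $t\mapsto t\,\min\{|t|,M\}^{2(\beta-1)}$ together with Theorem \ref{fembed}(1), and let $M\to\infty$.

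\emph{Stages 2 and 3 (Hölder continuity and decay).} With $g:=|u|^{p-1}u-u\in L^\infty(\mathbb R^n)$ and $u\in L^\infty\cap L^2\subset\mathcal{L}_s$, the interior Schauder-type estimates for the fractional Laplacian applied to $(-\Delta)^s u=g$ yield some $\alpha\in(0,1)$ (any $\alpha<\min\{2s,1\}$) and a constant $C=C(n,s)$ with $\|u\|_{C^\alpha(B_1(x_0))}\le C\bigl(\|g\|_{L^\infty(B_2(x_0))}+\|u\|_{\mathcal{L}_s}\bigr)$ for every $x_0\in\mathbb R^n$; as the right-hand side is uniform in $x_0$, we get $u\in C^\alpha(\mathbb R^n)$ with finite global seminorm $M:=[u]_{C^\alpha(\mathbb R^n)}$. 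For the decay, suppose that $u$ does not vanish at infinity: then $|u(x_k)|\ge\delta>0$ along some sequence $|x_k|\to\infty$, so by the uniform Hölder bound $|u|\ge\delta/2$ on the ball $B_r(x_k)$ with $r:=(\delta/2M)^{1/\alpha}$, hence $\int_{B_r(x_k)}|u|^2\ge(\delta/2)^2|B_r|$. Extracting a subsequence along which the balls $B_r(x_k)$ are pairwise disjoint contradicts $u\in L^2(\mathbb R^n)$, so $\lim_{|x|\to\infty}u(x)=0$. This proves the theorem with $\alpha$ as above and, say, $q=2$ (or any $q\in[2,\infty)$ produced in Stage 1).

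\emph{Main obstacle.} The delicate part is Stage 1, i.e.\ running the bootstrap rigorously in the nonlocal setting: in the resolvent version one must keep careful track of the mapping constants of $\mathcal K$ near the critical exponent so that the gain coming from $p<\tfrac{n+2s}{n-2s}$ is not lost and $u\in L^{q_k}$ genuinely propagates to $u\in L^{q_{k+1}}$, while in the Moser version one must justify the admissibility of the truncated, spatially cut-off test functions and control the nonlocal ``tail'' interaction terms created by the cutoff, verifying that every quantity remains finite along the iteration (which is the role of the truncation $\min\{|u|,M\}$). Once $u\in L^\infty(\mathbb R^n)$ is available, Stages 2 and 3 are routine given the fractional elliptic estimates.
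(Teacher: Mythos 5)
The paper does not prove this statement; it is quoted verbatim from Felmer--Quaas (Theorem 3.4 of \cite{FelQ}) as an auxiliary result. Your reconstruction is correct and follows essentially the same route as that reference: the resolvent identity $u=\bigl(1+(-\Delta)^s\bigr)^{-1}\bigl(|u|^{p-1}u\bigr)$ with the Bessel-type kernel (whose $|x|^{-(n-2s)}$ behaviour at the origin and $|x|^{-(n+2s)}$ decay at infinity give exactly the mapping properties you invoke), the bootstrap driven by $p<\tfrac{n+2s}{n-2s}$ to reach $L^\infty$, interior H\"older estimates for $(-\Delta)^su=g$ with $g\in L^\infty$, and the standard ``uniform H\"older plus $L^2$ forces decay'' argument via disjoint balls.
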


\begin{thm}(Theorem 1.3 \cite{FelQ})\label{radial4}
Equation (\ref{P2}) has a weak solution in $H^{s}(\mathbb{R}^{n}),$ which satisfies $u \geq 0$ a.e. in $\mathbb{R}^{n}.$ Moreover, $u$ is a classical solution that satisfies $u>0$ in $\mathbb{R}^{n}.$
\end{thm}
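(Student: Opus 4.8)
The plan is to produce the solution as a mountain-pass critical point of the $C^{2}$ functional $F$ from \eqref{energy2}, to arrange that it is nonnegative by a symmetrization argument at the level of the mountain-pass characterization, and finally to upgrade its regularity (invoking Theorem \ref{radial5}) and establish strict positivity via the pointwise formula \eqref{pointwisedefn}. First I would check that $F$ has the mountain-pass geometry on $H^{s}(\mathbb{R}^{n})$: since $1<p<\frac{n+2s}{n-2s}$, Theorem \ref{fembed}(1) gives $\int_{\mathbb{R}^{n}}|u|^{p+1}\,dx\le C\norm{u}_{H^{s}(\mathbb{R}^{n})}^{p+1}$, so $F(u)\ge\tfrac12\norm{u}_{H^{s}}^{2}-C'\norm{u}_{H^{s}}^{p+1}$ is bounded below by a positive constant on a small sphere $\norm{u}_{H^{s}}=\rho$, while $F(tu_{0})\to-\infty$ as $t\to\infty$ for any fixed $u_{0}\not\equiv0$ because $p+1>2$. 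Hence $c:=\inf_{\gamma}\max_{t\in[0,1]}F(\gamma(t))>0$, the infimum being over continuous paths from $0$ to a point where $F<0$. The main obstacle is that $F$ fails the Palais--Smale condition on all of $H^{s}(\mathbb{R}^{n})$ because of translation invariance. I would handle this by working in the radial subspace $H^{s}_{\mathrm{rad}}(\mathbb{R}^{n})$, for which the embedding $H^{s}_{\mathrm{rad}}(\mathbb{R}^{n})\hookrightarrow L^{q}(\mathbb{R}^{n})$ is \emph{compact} for every $q\in(2,2^{*}_{s})$ (the fractional analogue of Strauss' radial compactness lemma). A $(PS)_{c}$ sequence for $F|_{H^{s}_{\mathrm{rad}}}$ is bounded (from $F(u_{k})-\tfrac{1}{p+1}F'(u_{k})u_{k}$ and $p+1>2$), so along a subsequence it converges weakly in $H^{s}$ and strongly in $L^{p+1}$, and these two facts combine to give strong $H^{s}$ convergence; thus $(PS)_{c}$ holds on $H^{s}_{\mathrm{rad}}$. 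The mountain-pass lemma then yields $u\in H^{s}_{\mathrm{rad}}$ with $F(u)=c>0$, $F'(u)=0$, and by the principle of symmetric criticality $u$ is a genuine weak solution of \eqref{P2} in $H^{s}(\mathbb{R}^{n})$; moreover $u\not\equiv0$ since $c>0$. (Alternatively, compactness of a minimizing sequence up to translations can be recovered via the concentration-compactness principle.)

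To arrange $u\ge0$, observe that $F$ depends on its argument only through $\norm{\cdot}_{H^{s}}^{2}$ and $\int|\cdot|^{p+1}$, that $\norm{|v|}_{H^{s}}\le\norm{v}_{H^{s}}$ (equality in the $L^{2}$ part, and $\bigl||v(x)|-|v(y)|\bigr|\le|v(x)-v(y)|$ in the Gagliardo part), and that $\int||v||^{p+1}=\int|v|^{p+1}$. Consequently $\max_{t>0}F(t|v|)\le\max_{t>0}F(tv)$ for every $v\not\equiv0$, so the mountain-pass value equals $\inf\{\max_{t>0}F(tv):v\in H^{s}_{\mathrm{rad}},\ v\ge0,\ v\not\equiv0\}$, and the compactness argument above can be run with a nonnegative minimizing sequence, producing a critical point with $u\ge0$ a.e.\ and $u\not\equiv0$.

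It remains to prove that $u$ is a classical solution and that $u>0$. By Theorem \ref{radial5}, $u\in C^{\alpha}(\mathbb{R}^{n})$ for some $\alpha\in(0,1)$ and $u(x)\to0$ as $|x|\to\infty$, so in particular $u\in L^{\infty}(\mathbb{R}^{n})$. Writing the equation as $(-\Delta)^{s}u=g(u)$ with $g(u)=|u|^{p-1}u-u$, we then have $g(u)\in C^{\alpha}_{\mathrm{loc}}(\mathbb{R}^{n})\cap L^{\infty}(\mathbb{R}^{n})$, and the Schauder-type interior estimates for $(-\Delta)^{s}$ upgrade $u$ to $C^{2s+\alpha}_{\mathrm{loc}}$ when $2s+\alpha<1$, resp.\ to $C^{1,2s+\alpha-1}_{\mathrm{loc}}$ when $\tfrac12\le s<1$ — precisely the regularity class in the definition of classical solution preceding Theorem \ref{radial5} — so $u$ satisfies \eqref{P2} pointwise. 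Finally, suppose $u(x_{0})=0$ for some $x_{0}\in\mathbb{R}^{n}$. Since $u\ge0$, the pointwise formula \eqref{pointwisedefn} gives
\[
(-\Delta)^{s}u(x_{0})=c_{n,s}\,\mathrm{P.V.}\!\int_{\mathbb{R}^{n}}\frac{u(x_{0})-u(y)}{|x_{0}-y|^{n+2s}}\,dy=-c_{n,s}\int_{\mathbb{R}^{n}}\frac{u(y)}{|x_{0}-y|^{n+2s}}\,dy\le 0,
\]
while the equation forces $(-\Delta)^{s}u(x_{0})=g(u(x_{0}))=g(0)=0$; hence $\int_{\mathbb{R}^{n}}u(y)\,|x_{0}-y|^{-n-2s}\,dy=0$, which is impossible unless $u\equiv0$, a contradiction. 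Therefore $u>0$ in $\mathbb{R}^{n}$, completing the proof. The genuinely delicate point is the loss of compactness in the existence step; everything afterwards is a bootstrap together with the fractional strong maximum principle.
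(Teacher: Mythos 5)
The paper offers no proof of this statement: Theorem \ref{radial4} is imported verbatim from Felmer--Quaas \cite{FelQ}, so there is nothing internal to compare your argument against. Taken on its own terms, your proof is essentially correct and follows a standard route, though it differs technically from the cited source: Felmer and Quaas recover compactness for the translation-invariant functional via a comparison/concentration argument on translated Palais--Smale sequences, whereas you restrict to $H^{s}_{\mathrm{rad}}(\mathbb{R}^{n})$, invoke the fractional Strauss compactness lemma (legitimate here, since the standing hypothesis $n>\max\{1,2s\}$ forces $n\geq 2$), and then return to the full space by symmetric criticality; both routes work for the autonomous pure-power nonlinearity. The one step you should tighten is the nonnegativity: from $\max_{t>0}F(t\abs{v})\leq\max_{t>0}F(tv)$ you correctly identify the mountain-pass level with an infimum over nonnegative Nehari elements, but a minimizing sequence on the Nehari manifold is not automatically a Palais--Smale sequence; you need Ekeland's variational principle, or the observation that the Nehari manifold is a natural constraint because $t\mapsto F(tv)$ has a unique nondegenerate maximum (the same fact the paper proves for $J_{d}$ in Lemma \ref{critical2}), or, more simply, run the mountain pass for the modified functional with $(u^{+})^{p+1}$ in place of $\abs{u}^{p+1}$ and test the resulting equation with $u^{-}$ to kill the negative part. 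The regularity bootstrap via Theorem \ref{radial5} and Schauder estimates, and the strong-maximum-principle argument for strict positivity at a putative interior zero, are both correct.
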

Following theorem shows that the solutions of (\ref{P2}) has a power type of decay at infinity.
 \begin{thm}(Theorem 1.5 \cite{FelQ})\label{radial2}
 	Let $u$ be a positive classical solution of (\ref{P2}) such that $$\lim_{\abs{x} \rightarrow \infty}u(x)=0.$$ Then, there exist constants $0< C_{1} \leq C_{2}$ such that 
 	\begin{align}
 		\frac{C_{1}}{\abs{x}^{n+2s}}\leq u(x) \leq \frac{C_{2}}{\abs{x}^{n+2s}} \,\,\, \text{for all } \abs{x}\geq 1.
 	\end{align}
 \end{thm}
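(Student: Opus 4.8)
\emph{Strategy.} The plan is to bracket $u$ between explicit radial barriers built from the fundamental kernel of $(-\Delta)^s+\lambda$, $\lambda>0$. Let $\mathcal{K}_\lambda$ be the kernel of $((-\Delta)^s+\lambda)^{-1}$, i.e.\ $\widehat{\mathcal{K}_\lambda}(\xi)=(|\xi|^{2s}+\lambda)^{-1}$. I would take as known the following classical facts about $\mathcal{K}_\lambda$ (they follow from the subordination identity $\mathcal{K}_\lambda=\int_0^\infty e^{-\lambda t}p_t\,dt$, with $p_t$ the $2s$-stable heat kernel, together with the heavy-tail asymptotics $p_t(x)\asymp t\,|x|^{-(n+2s)}$ of stable densities): $\mathcal{K}_\lambda>0$, $\mathcal{K}_\lambda\in L^1(\mathbb{R}^n)$, $\mathcal{K}_\lambda$ is radial and radially decreasing, $\mathcal{K}_\lambda\in C^\infty(\mathbb{R}^n\setminus\{0\})$ with an integrable $|x|^{2s-n}$ singularity at the origin, $(-\Delta)^s\mathcal{K}_\lambda+\lambda\mathcal{K}_\lambda=0$ pointwise in $\mathbb{R}^n\setminus\{0\}$, and --- the decisive point --- $\mathcal{K}_\lambda(x)\,|x|^{n+2s}$ tends to a positive constant as $|x|\to\infty$, so $c_\lambda|x|^{-(n+2s)}\le\mathcal{K}_\lambda(x)\le C_\lambda|x|^{-(n+2s)}$ for $|x|\ge1$. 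This last property is exactly what forces the exponent $n+2s$; everything else is comparison. Also, a positive classical solution with $u(x)\to0$ is bounded: $0<u\le M$ on $\mathbb{R}^n$.

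\emph{Upper bound.} I would linearize the equation at infinity. Since $u(x)\to0$, there is $R\ge1$ with $u(x)^{p-1}\le\tfrac12$ for $|x|\ge R$, and then the equation gives $(-\Delta)^s u(x)=-u(x)\,(1-u(x)^{p-1})\le-\tfrac12u(x)$, i.e.\ $(-\Delta)^s u+\tfrac12 u\le0$ in $\{|x|>R\}$. As a supersolution take $\psi:=\dfrac{M}{\mathcal{K}_{1/2}(x_0)}\,\mathcal{K}_{1/2}$ for any $x_0$ with $|x_0|=R$ (so $\mathcal{K}_{1/2}(x_0)=\min_{\overline{B_R}}\mathcal{K}_{1/2}$ by radial monotonicity): then $\psi\ge M\ge u$ on $\overline{B_R}$, $(-\Delta)^s\psi+\tfrac12\psi=0$ in $\{|x|>R\}$ by the kernel identity, and $\psi\to0$. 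Now apply the maximum principle for $(-\Delta)^s+\tfrac12$ on the exterior domain $\{|x|>R\}$ to $z:=u-\psi$: a positive maximum of $z$ would have to be attained at an interior point $x_1$ (as $z\le0$ on $\overline{B_R}$ and $z\to0$), forcing $(-\Delta)^s z(x_1)>0$ and hence $(-\Delta)^s z(x_1)+\tfrac12 z(x_1)>0$, contradicting the differential inequality. Therefore $u\le\psi$ on $\mathbb{R}^n$, which gives $u(x)\le C_2|x|^{-(n+2s)}$ for $|x|\ge1$.

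\emph{Lower bound.} The upper bound yields $u^p\in L^1(\mathbb{R}^n)$ (since $p(n+2s)>n$) while $\int_{B_1}u^p>0$ by positivity and continuity. As $u$ is bounded and vanishes at infinity it is the unique such solution of $(-\Delta)^s v+v=u^p$ (the difference of two would solve the homogeneous equation and vanish at infinity, hence be $0$ by the maximum principle), so $u=\mathcal{K}_1*u^p$. For $|x|\ge2$, restricting the convolution to $B_1$ and noting $1\le|x-y|\le2|x|$ for $y\in B_1$, the two-sided bound on $\mathcal{K}_1$ gives $\mathcal{K}_1(x-y)\ge c_1|x-y|^{-(n+2s)}\ge c_1(2|x|)^{-(n+2s)}$, whence $u(x)\ge c_1(2|x|)^{-(n+2s)}\int_{B_1}u^p=C_1'|x|^{-(n+2s)}$; on the compact shell $1\le|x|\le2$ one uses continuity and positivity of $u$. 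Combining, $u(x)\ge C_1|x|^{-(n+2s)}$ for all $|x|\ge1$. (Alternatively one may avoid the representation formula and compare $u$ from below directly with $c_0\,\mathcal{K}_1*\mathbf{1}_{B_1}$, where $c_0:=\inf_{\overline{B_1}}u^p>0$, using $(-\Delta)^s u+u=u^p\ge c_0\mathbf{1}_{B_1}$ and the maximum principle for $(-\Delta)^s+1$.)

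\emph{Main obstacle.} The comparison machinery is routine; the real work lies in its two inputs. The first is the sharp far-field asymptotics $\mathcal{K}_\lambda(x)\,|x|^{n+2s}\to\mathrm{const}>0$: this is the Fourier/L\'evy fact responsible for the exponent $n+2s$ (the small-$\xi$ expansion $(|\xi|^{2s}+\lambda)^{-1}=\lambda^{-1}-\lambda^{-2}|\xi|^{2s}+\cdots$ translates into spatial decay of order exactly $n+2s$), and it must be established with matching upper and lower constants. The second is the regularity bookkeeping --- that $(-\Delta)^s$ may be taken pointwise on $u$ and on the barriers (here one invokes that $u$ is a classical solution lying in $\mathcal{L}_s$, and that $\mathcal{K}_\lambda$ is smooth off the origin with an integrable singularity there), that the identity $u=\mathcal{K}_1*u^p$ makes sense, and that the exterior-domain maximum principle is legitimate under the stated decay. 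Once these are in place, the two-sided bound follows exactly as above.
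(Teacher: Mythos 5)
The paper does not prove this statement: it is quoted verbatim as Theorem~1.5 of \cite{FelQ} and used as a black box. Your argument is sound and is essentially the proof given in that reference --- the two-sided $|x|^{-(n+2s)}$ bounds on the kernel of $((-\Delta)^s+\lambda)^{-1}$ as the key input, an exterior-domain comparison with a multiple of the kernel for the upper bound, and the representation $u=\mathcal{K}_1*u^p$ restricted to $B_1$ for the lower bound --- so there is nothing to fault beyond the kernel asymptotics you correctly flag as the one nontrivial ingredient.
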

One can see that there exist some $m>0$ and $s_{0}>0$ such that for $f(u)=u^{p}-u,$ we have
\begin{align}\label{lipschitz}
	\frac{f(v)-f(u)}{v-u}\leq \frac{v^{p}-u^{p}}{v-u}\leq C (v+u)^{m} \text{ for all } 0 < u< v < s_{0}, 
\end{align}
where $C>0$ is some constant. Also, it is simple to see that $f: [0,\infty) \to \mathbb{R}$ is locally Lipschitz. Consequently, we have the following result on radial symmetry and monotonicity property of positive solutions of (\ref{P2}).\underline{}

\begin{thm}(Theorem 1.2 \cite{FelW})\label{radial1}
Let $u$ be a positive classical solution of (\ref{P2}) such that $$\lim_{\abs{x} \rightarrow \infty}u(x)=0.$$ Further, assume that there exists $$t> \max \Bigl\{\frac{2s}{m}, \frac{n}{m+2} \Bigr\}$$ such that $u$ satisfies $u(x)=O\left(\frac{1}{\abs{x}^{t}}\right)$ as $ \abs{x}\rightarrow \infty.$ Then, $u$ is radially symmetric and strictly decreasing about some point in $\mathbb{R}^{n}.$
\end{thm}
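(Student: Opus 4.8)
The plan is to prove Theorem \ref{radial1} by the \emph{method of moving planes} for the nonlocal operator $(-\Delta)^{s}$, with the decay hypothesis $u(x)=O(\abs{x}^{-t})$ and the bound \eqref{lipschitz} on the nonlinearity near $0$ playing the role of the ``data at infinity''. Fix a unit vector, say $e_{1}$; for $\lambda\in\mathbb{R}$ set $T_{\lambda}=\{x_{1}=\lambda\}$, $\Sigma_{\lambda}=\{x_{1}<\lambda\}$, let $x^{\lambda}$ be the reflection of $x$ across $T_{\lambda}$, and put $u_{\lambda}(x):=u(x^{\lambda})$, $w_{\lambda}:=u_{\lambda}-u$. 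Since $(-\Delta)^{s}$ commutes with the reflection and $u$ solves \eqref{P2} pointwise (here the classical regularity and $u\in\mathcal{L}_{s}$ are used), $w_{\lambda}$ is antisymmetric, $w_{\lambda}(x^{\lambda})=-w_{\lambda}(x)$, and
\[
(-\Delta)^{s}w_{\lambda}+\bigl(1-c_{\lambda}(x)\bigr)w_{\lambda}=0\ \text{ in }\mathbb{R}^{n},\qquad c_{\lambda}(x):=\frac{u_{\lambda}(x)^{p}-u(x)^{p}}{u_{\lambda}(x)-u(x)}
\]
(with $c_{\lambda}:=p\,u^{p-1}$ where $u_{\lambda}=u$). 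By boundedness of $u$, $0\le c_{\lambda}\le p\norm{u}_{L^{\infty}(\mathbb{R}^n)}^{p-1}=:C_{*}$, and by \eqref{lipschitz}, $0\le c_{\lambda}(x)\le C\bigl(u_{\lambda}(x)+u(x)\bigr)^{m}$ wherever $u_{\lambda}(x),u(x)<s_{0}$; moreover $u\to0$ at infinity together with $\abs{x^{\lambda}}\to\infty$ as $\abs{x}\to\infty$ within $\Sigma_{\lambda}$ gives $w_{\lambda}\to0$ at infinity in $\Sigma_{\lambda}$, while $w_{\lambda}\equiv0$ on $T_{\lambda}$.

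The engine is the following elementary \emph{minimum-point inequality}: if $x^{0}\in\Sigma_{\lambda}$ attains $\inf_{\Sigma_{\lambda}}w_{\lambda}<0$ (which is attained, by the decay and the vanishing on $T_{\lambda}$), then splitting the defining integral of $(-\Delta)^{s}w_{\lambda}(x^{0})$ over $\Sigma_{\lambda}$ and $\mathbb{R}^{n}\setminus\Sigma_{\lambda}$, using antisymmetry on the latter and $\abs{x^{0}-y^{\lambda}}\ge\abs{x^{0}-y}$ for $x^{0},y\in\Sigma_{\lambda}$, one gets
\[
(-\Delta)^{s}w_{\lambda}(x^{0})\ \le\ 2c_{n,s}\,w_{\lambda}(x^{0})\int_{\Sigma_{\lambda}}\frac{dy}{\abs{x^{0}-y^{\lambda}}^{\,n+2s}}\ <\ 0 .
\]
Comparison with the equation forces $c_{\lambda}(x^{0})>1$; but $w_{\lambda}(x^{0})<0$ means $u_{\lambda}(x^{0})<u(x^{0})$, hence $c_{\lambda}(x^{0})<p\,u(x^{0})^{p-1}$, so $u(x^{0})$ must be bounded away from $0$. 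For $\lambda\ll0$ every $x^{0}\in\Sigma_{\lambda}$ has $\abs{x^{0}}$ large, so $u(x^{0})$ is small --- a contradiction. Hence $w_{\lambda}\ge0$ in $\Sigma_{\lambda}$ for all sufficiently negative $\lambda$, and one sets $\lambda_{0}:=\sup\{\lambda:\ w_{\mu}\ge0\text{ in }\Sigma_{\mu}\ \forall\,\mu\le\lambda\}$. Letting $\lambda\to+\infty$ in $w_{\lambda}\ge0$ and using $u_{\lambda}\to0$ shows $\lambda_{0}<\infty$, and by continuity $w_{\lambda_{0}}\ge0$ in $\Sigma_{\lambda_{0}}$.

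Next I would show $w_{\lambda_{0}}\equiv0$ in $\Sigma_{\lambda_{0}}$, i.e.\ $u$ is symmetric about $T_{\lambda_{0}}$. If not, the strong maximum principle for antisymmetric supersolutions of $(-\Delta)^{s}+(1-c_{\lambda_{0}})$ (bounded coefficient) gives $w_{\lambda_{0}}>0$ in $\Sigma_{\lambda_{0}}$, and one reaches a contradiction by proving $w_{\lambda_{0}+\delta}\ge0$ in $\Sigma_{\lambda_{0}+\delta}$ for small $\delta>0$. If that failed, there would be $\lambda_{k}\downarrow\lambda_{0}$ and $x^{k}\in\Sigma_{\lambda_{k}}$ with $w_{\lambda_{k}}(x^{k})=\inf_{\Sigma_{\lambda_{k}}}w_{\lambda_{k}}<0$. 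Along a bounded subsequence $x^{k}\to\bar x$ with $w_{\lambda_{0}}(\bar x)\le0$, hence $\bar x\in T_{\lambda_{0}}$ (as $w_{\lambda_{0}}>0$ off $T_{\lambda_0}$), so $\mathrm{dist}(x^{k},T_{\lambda_{k}})\to0$ and $\int_{\Sigma_{\lambda_{k}}}\abs{x^{k}-y^{\lambda_{k}}}^{-(n+2s)}dy\asymp\mathrm{dist}(x^{k},T_{\lambda_{k}})^{-2s}\to\infty$; plugging this into the minimum-point inequality together with $\abs{c_{\lambda_{k}}(x^{k})-1}\le C_{*}+1$ yields $C_{*}+1\ge 2c_{n,s}\int_{\Sigma_{\lambda_{k}}}\abs{x^{k}-y^{\lambda_{k}}}^{-(n+2s)}dy\to\infty$, a contradiction. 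If instead $\abs{x^{k}}\to\infty$, then $u(x^{k})\to0$ and $u_{\lambda_{k}}(x^{k})=u\bigl((x^{k})^{\lambda_{k}}\bigr)\to0$, so by \eqref{lipschitz} $c_{\lambda_{k}}(x^{k})\le C\bigl(u_{\lambda_{k}}(x^{k})+u(x^{k})\bigr)^{m}\to0$; then for $k$ large the equation gives $(-\Delta)^{s}w_{\lambda_{k}}(x^{k})=(c_{\lambda_{k}}(x^{k})-1)w_{\lambda_{k}}(x^{k})>0$, again contradicting the minimum-point inequality. Thus $\lambda_{0}$ could be pushed further --- impossible; so $w_{\lambda_{0}}\equiv0$. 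Finally, for every $\lambda<\lambda_{0}$, $w_{\lambda}\ge0$ and $w_{\lambda}\not\equiv0$ (else $u$ would be symmetric about two distinct parallel hyperplanes, hence periodic, contradicting $u>0$ and $u\to0$), so the strong maximum principle gives $w_{\lambda}>0$ in $\Sigma_{\lambda}$: $u$ is strictly monotone in the $x_{1}$-direction up to $T_{\lambda_{0}}$.

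Running this for every unit direction $e$ produces a hyperplane $H_{e}$ of symmetry of $u$ toward which $u$ is strictly monotone. As $u$ is continuous, positive and vanishes at infinity, it attains its maximum; by strict monotonicity every maximum point lies on each $H_{e}$, and two distinct maximum points would both lie on $H_{e}$ with $e$ the direction of the segment joining them, impossible since $u$ takes different values at points with distinct $e$-coordinates. Hence the maximum point $x_{0}$ is unique, every $H_{e}$ equals $x_{0}+e^{\perp}$, so symmetry of $u$ about all hyperplanes through $x_{0}$ forces $u$ radial about $x_{0}$, and the strict monotonicity makes it strictly decreasing in $\abs{x-x_{0}}$. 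The main obstacle is the step that moves the plane past $\lambda_{0}$ in the unbounded domain $\mathbb{R}^{n}$: the bounded-domain narrow-region principle must be replaced by the kernel blow-up $\int_{\Sigma_{\lambda}}\abs{x-y^{\lambda}}^{-(n+2s)}dy\asymp\mathrm{dist}(x,T_{\lambda})^{-2s}$ near $T_{\lambda}$, and one must rule out negativity points escaping to infinity --- precisely where the decay $u=O(\abs{x}^{-t})$, $t>\max\{2s/m,\,n/(m+2)\}$, and \eqref{lipschitz} are indispensable, forcing $c_{\lambda_{k}}(x^{k})<1$ there (and, in a careful treatment, guaranteeing the relevant nonlocal integrals converge so that the infimum is attained and the pointwise inequalities are legitimate); the auxiliary strong maximum and Hopf-type principles for antisymmetric functions in the fractional setting rely on the boundedness of the coefficient $1-c_{\lambda}$.
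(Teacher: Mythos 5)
This statement is not proved in the paper at all: it is imported verbatim as Theorem 1.2 of the cited reference [FelW] (Felmer--Wang), so there is no in-paper argument to compare against. Your moving-planes proof is the right idea and is, in spirit, the method of the cited source; what you have written is the \emph{direct} (pointwise) variant of the method \`a la Chen--Li--Li, whereas Felmer--Wang run the plane via integral estimates, and this difference shows up in one telling place: the quantitative decay hypothesis $t>\max\{2s/m,\,n/(m+2)\}$ never actually enters your argument. In the integral version that threshold is what makes quantities like $\|c_\lambda\|_{L^{n/2s}(\Sigma_\lambda)}$ small (one needs $tm\cdot n/(2s)>n$, i.e.\ $t>2s/m$, and similarly $t>n/(m+2)$ for the relevant norms of $u$ to be finite); in your pointwise version the mass term makes the linearized coefficient $1-c_\lambda$ tend to $1>0$ at infinity, so only the qualitative decay $u\to 0$ is needed. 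That is not an error --- you are proving the statement under weaker hypotheses --- but it is worth being aware that your route is genuinely different from the one the hypotheses were tailored to. The sketch itself is sound: the minimum-point inequality, the start of the plane from $-\infty$, the blow-up $\int_{\Sigma_\lambda}\abs{x-y^{\lambda}}^{-(n+2s)}dy= C\,\mathrm{dist}(x,T_{\lambda})^{-2s}$ replacing the narrow-region principle, and the dichotomy (negativity points clustering on $T_{\lambda_0}$ versus escaping to infinity) are all correct. The only places you lean on unproved black boxes are the strong maximum principle for antisymmetric supersolutions (needed to conclude $w_{\lambda_0}>0$ or $w_{\lambda_0}\equiv 0$, and again to get strict monotonicity for $\lambda<\lambda_0$) and the justification that the pointwise evaluation of $(-\Delta)^{s}w_{\lambda}$ at the minimum is legitimate; both are standard in this setting and follow from the classical regularity and boundedness of $u$, but in a complete write-up they should be stated and proved (or precisely cited), since the whole argument hinges on them.
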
 
\begin{rem}
Since \begin{align*}
	\frac{C_{1}}{\abs{x}^{n+2s}}\leq u(x) \leq \frac{C_{2}}{\abs{x}^{n+2s}} \,\,\, \text{for all } \abs{x}\geq 1,
\end{align*}  we can take $t=n+2s$ in the above theorem.
\end{rem}
Now, Proposition 4.1\cite{Sech} ascertains  that if $u \in \mathbb{R}^n$ is a weak solution of (\ref{P2}) then $u$ satisfies the following Pohozaev identity:
\begin{align*}
\mathcal{P}(u):=	\frac{(n-2s)c_{n,s}}{4}\int_{\mathbb{R}^n} \int_{\mathbb{R}^n}  \frac{\abs{u(x)-u(y)}^2}{\abs{x-y}^{n+2s}}dxdy + \frac{n}{2}\int_{\mathbb{R}^n}  u^2
dx-\frac{n}{p+1} \int_{\mathbb{R}^n}  u^{p+1}=0.
\end{align*}
Let us define 
\begin{align*}
	\mathcal{G}:=\Bigl\{ u \in H^s(\mathbb{R}^n) \setminus \{0\} \mid \mathcal{P}(u)=0 \Bigr\}.
\end{align*}
In \cite{Chen}, authors have obtained a weak solution $w \in H^{s}(\mathbb{R}^{n})$ of (\ref{P2}) with least energy among all other solutions. In particular, they have proved the following result.
\begin{thm} (Theorem 1.2 \cite{Chen}) \label{radial6}
Equation (\ref{P2}) has a weak solution $w \in H^s(\mathbb{R}^n)$ such that $$0<F(w)=\inf_{u \in \mathcal{G}}F(u) .$$
\end{thm}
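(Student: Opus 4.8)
The plan is to realize $w$ as a minimizer of $F$ over the Pohozaev manifold $\mathcal{G}$ and then to upgrade the constrained critical point to a free one. Write $[u]_{s}^{2}:=\int_{\mathbb{R}^n}\int_{\mathbb{R}^n}\frac{\abs{u(x)-u(y)}^{2}}{\abs{x-y}^{n+2s}}\,dx\,dy$. The crucial preliminary remark is that on $\mathcal{G}$ the functional $F$ is rigid: using $\mathcal{P}(u)=0$ to eliminate $\int_{\mathbb{R}^n}\abs{u}^{p+1}$ from $F(u)$, a one-line computation gives
\begin{align*}
	F(u)=\frac{s\,c_{n,s}}{2n}\,[u]_{s}^{2}\qquad\text{for every }u\in\mathcal{G},
\end{align*}
so $F\ge 0$ on $\mathcal{G}$ and minimizing $F$ over $\mathcal{G}$ is the same as minimizing $[\,\cdot\,]_{s}^{2}$ over $\mathcal{G}$. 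To see that $\inf_{\mathcal{G}}F>0$, I would combine the constraint written as $\frac{(n-2s)c_{n,s}}{4}[u]_{s}^{2}+\frac{n}{2}\norm{u}_{L^2}^{2}=\frac{n}{p+1}\norm{u}_{L^{p+1}}^{p+1}$ with the fractional Gagliardo--Nirenberg--Sobolev inequality $\norm{u}_{L^{p+1}}^{p+1}\le C\,\norm{u}_{L^2}^{(p+1)(1-\theta)}[u]_{s}^{(p+1)\theta}$, $\theta=\frac{n(p-1)}{2s(p+1)}$: subcriticality $p<\frac{n+2s}{n-2s}$ forces $(p+1)(1-\theta)>0$ and $n>2s$ forces $(p+1)(1-\theta)<2$, so the $\norm{u}_{L^2}$ factor on the right can be absorbed against $\frac{n}{2}\norm{u}_{L^2}^{2}$ on the left, leaving $[u]_{s}^{2}\ge\delta$ for a fixed $\delta>0$ and all $u\in\mathcal{G}$. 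The same chain also bounds $\norm{u}_{L^{p+1}}$ from below on $\mathcal{G}$.

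Next, $\mathcal{G}\ne\emptyset$: a weak solution of (\ref{P2}) (Theorem \ref{radial4}) satisfies the Pohozaev identity of \cite{Sech}, hence lies in $\mathcal{G}$; alternatively, along the two-parameter family $u_{t,\lambda}(x)=t\,u(x/\lambda)$ the functional $\mathcal{P}$ changes sign ($t$ large makes the $L^{p+1}$ term dominate since $p+1>2$, $\lambda$ small makes the seminorm term dominate since $n-2s<n$), so it vanishes somewhere. If $(u_{j})\subset\mathcal{G}$ is a minimizing sequence, then $[u_{j}]_{s}^{2}=\frac{2n}{s\,c_{n,s}}F(u_{j})$ is bounded, and the GNS estimate above bounds $\norm{u_{j}}_{L^2}^{2}$ by a power of $[u_{j}]_{s}^{2}$; hence $(u_{j})$ is bounded in $H^{s}(\mathbb{R}^n)$.

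The heart of the matter is compactness, as $H^{s}(\mathbb{R}^n)\hookrightarrow L^{p+1}(\mathbb{R}^n)$ is not compact. I would first replace each $u_{j}$ by its Schwarz symmetrization $u_{j}^{*}$: the fractional P\'olya--Szeg\H{o} inequality gives $[u_{j}^{*}]_{s}^{2}\le[u_{j}]_{s}^{2}$ while $\norm{\cdot}_{L^2}$ and $\norm{\cdot}_{L^{p+1}}$ are unchanged, so $\mathcal{P}(u_{j}^{*})\le 0$; rescaling $u_{j}^{*}(\cdot/\lambda)$ with the appropriate $\lambda\in(0,1]$ puts the function back on $\mathcal{G}$ without increasing $[\,\cdot\,]_{s}^{2}$, hence without increasing $F$. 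Thus the minimizing sequence may be taken radially symmetric and nonincreasing, and the compact embedding of radial $H^{s}(\mathbb{R}^n)$ functions into $L^{p+1}(\mathbb{R}^n)$ (the fractional Strauss lemma, valid since $2<p+1<2^{*}_{s}$) gives a subsequence with $u_{j}\to w$ strongly in $L^{p+1}(\mathbb{R}^n)$, weakly in $H^{s}(\mathbb{R}^n)$, and a.e.\ in $\mathbb{R}^n$. Since $\norm{u_{j}}_{L^{p+1}}$ is bounded below, $w\ne 0$. Weak lower semicontinuity of $[\,\cdot\,]_{s}^{2}$ and $\norm{\cdot}_{L^2}^{2}$ together with strong $L^{p+1}$ convergence yield $\mathcal{P}(w)\le\lim\mathcal{P}(u_{j})=0$; if the inequality were strict, rescaling $w$ into $\mathcal{G}$ would produce a member of $\mathcal{G}$ with $F$ strictly below $\inf_{\mathcal{G}}F$, which is absurd. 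Hence $w\in\mathcal{G}$, and $F(w)=\frac{s\,c_{n,s}}{2n}[w]_{s}^{2}\le\frac{s\,c_{n,s}}{2n}\liminf[u_{j}]_{s}^{2}=\inf_{\mathcal{G}}F$, so $w$ attains the infimum (and $w\ge 0$ because it is a symmetrization).

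Finally I would check that the constrained minimizer solves (\ref{P2}). Since $\frac{d}{d\lambda}\mathcal{P}(w(\cdot/\lambda))\big|_{\lambda=1}=-\frac{s(n-2s)c_{n,s}}{2}[w]_{s}^{2}\ne 0$, the set $\mathcal{G}$ is a $C^{1}$ manifold of codimension one near $w$ and the Lagrange multiplier rule gives $F'(w)=\mu\,\mathcal{P}'(w)$ for some $\mu\in\mathbb{R}$. Expanding this identity shows $w$ weakly solves $d'(-\Delta)^{s}w+w=\abs{w}^{p-1}w$ with $d'=\frac{1-\mu(n-2s)}{1-\mu n}$ (the case $1-\mu n=0$ is ruled out since it would force $w\equiv 0$); the Pohozaev identity for this equation, compared with $\mathcal{P}(w)=0$ and $[w]_{s}^{2}>0$, forces $d'=1$ and therefore $\mu=0$. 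Hence $F'(w)=0$, i.e.\ $w$ is a weak solution of (\ref{P2}) with $0<F(w)=\inf_{u\in\mathcal{G}}F(u)$, as claimed. I expect the compactness step to be the main obstacle: without the radial reduction a minimizing sequence could vanish or split into pieces escaping to infinity, so the P\'olya--Szeg\H{o}/Strauss argument (or a concentration--compactness analysis) is indispensable; a secondary subtlety is the vanishing of the Lagrange multiplier, which hinges on the exact dilation structure of $\mathcal{P}$.
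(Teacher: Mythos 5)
The paper contains no proof of this statement: Theorem \ref{radial6} is quoted verbatim from Theorem 1.2 of \cite{Chen}, so there is no internal argument to compare yours against, and the assessment below is of your proposal on its own terms. Your argument is a coherent and essentially correct proof along the standard Pohozaev-manifold route. The rigidity identity $F(u)=\frac{s\,c_{n,s}}{2n}[u]_{s}^{2}$ on $\mathcal{G}$ is correct; the Gagliardo--Nirenberg absorption does give $[u]_{s}^{2}\ge\delta$ on $\mathcal{G}$ (your exponent bookkeeping checks out: $(p+1)(1-\theta)<2$ is exactly $n>2s$, $\theta<1$ is exactly $p<\frac{n+2s}{n-2s}$, and the resulting power of $[u]_s$ exceeds $2$ because $p>1$); and symmetrization plus rescaling back onto $\mathcal{G}$, followed by the fractional Strauss compactness for radial functions, is the standard way to defeat the loss of compactness (note $n\ge 2$ is not guaranteed here only when $n=1<2s^{-1}\cdot s$ is excluded, but the paper's hypothesis $n>\max\{1,2s\}$ forces $n\ge2$, so the radial compact embedding applies). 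Two small repairs are needed. First, Schwarz symmetrization should be applied to $\abs{u_{j}}$, using $[\,\abs{u_{j}}\,]_{s}\le[u_{j}]_{s}$, since members of a minimizing sequence need not be nonnegative. Second, the claim that the nonvanishing of the derivative at $\lambda=1$ of $\lambda\mapsto\mathcal{P}(w(\cdot/\lambda))$ yields $\mathcal{P}'(w)\ne0$ is only formal, because the dilation generator $x\cdot\nabla w$ need not lie in $H^{s}(\mathbb{R}^{n})$; the clean substitute is the same device you already use to kill the multiplier: if $\mathcal{P}'(w)=0$ then $w$ weakly solves $\frac{n-2s}{n}(-\Delta)^{s}w+w=\abs{w}^{p-1}w$, and the Pohozaev identity for that equation compared with $\mathcal{P}(w)=0$ forces $[w]_{s}=0$, hence $w=0$, a contradiction. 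With these adjustments the argument is complete; be aware that \cite{Chen} itself works in a more general variable-potential, Berestycki--Lions setting and organizes the proof differently, so if you intend this as a replacement for the citation you should either carry out the omitted standard steps (fractional P\'olya--Szeg\H{o}, radial compactness, Pohozaev identity for the rescaled equation) or keep the reference.
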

\noindent Combining Theorems \ref{radial4}, \ref{radial2}, \ref{radial1} and \ref{radial6} we have the following result.
\begin{thm}\label{radial3}
	Equation (\ref{P2}) has a positive classical solution $w \in H^{s}(\mathbb{R}^n)$ satisfying
	\begin{enumerate}
		\item[(a)] $w$ has a power type of decay at infinity, i.e., there exist constants $0< C_{1} \leq C_{2}$ such that 
		\begin{align*}
		\frac{C_{1}}{\abs{x}^{n+2s}}\leq w(x) \leq \frac{C_{2}}{\abs{x}^{n+2s}} \,\,\, \text{for all } \abs{x}\geq 1 ;
		\end{align*}
	   \item[(b)] $w$ is radially symmetric, i.e., $w(x)=w(r)$ with $r=\abs{x};$
		\item[(c)] For any non-negative classical solution $u \in  H^{s}(\mathbb{R}^{n})$ of (\ref{P2}), $0<F(w) \leq F(u)$ holds unless $u=0.$
	\end{enumerate}
\end{thm}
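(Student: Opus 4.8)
The plan is to take the variational ground state furnished by Theorem \ref{radial6} and upgrade it in turn to a nonnegative solution, then to a classical and strictly positive one, then extract its sharp decay, then its radial symmetry, and finally read off the minimality property (c) from the Pohozaev characterization. In other words, (a) will come from Theorem \ref{radial2}, (b) from Theorem \ref{radial1}, and (c) directly from Theorem \ref{radial6}, once we know that the solution produced by Theorem \ref{radial6} can be taken positive and classical.

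First I would start from the weak solution $w \in H^s(\mathbb{R}^n)$ of \eqref{P2} given by Theorem \ref{radial6}, which satisfies $0<F(w)=\inf_{u\in\mathcal{G}}F(u)$; since every weak solution of \eqref{P2} lies in $\mathcal{G}$, this $w$ is a genuine least energy solution. Replacing $w$ by $|w|$ leaves $\|w\|_{L^2(\mathbb{R}^n)}$ and $\|w\|_{L^{p+1}(\mathbb{R}^n)}$ untouched and does not increase the Gagliardo seminorm, so $F(|w|)\le F(w)$ and $\mathcal{P}(|w|)\le\mathcal{P}(w)=0$. A scaling argument on the Pohozaev manifold (dilate $x\mapsto x/\lambda$, and if necessary rescale the amplitude, so as to return to $\mathcal{G}$ without raising the energy) then produces a nonnegative minimizer of $F$ over $\mathcal{G}$, and a constrained minimizer on $\mathcal{G}$ is itself a weak solution of \eqref{P2} once one checks, by testing the Euler--Lagrange equation against the solution and against the scaling vector field, that the Lagrange multiplier is admissible. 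I would relabel this nonnegative least energy weak solution as $w$.

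Next I would invoke Theorem \ref{radial5} to get $w\in L^q(\mathbb{R}^n)\cap C^\alpha(\mathbb{R}^n)$ with $w(x)\to0$ as $|x|\to\infty$, hence $w\in\mathcal{L}_{s}$; feeding this back into \eqref{P2} and bootstrapping with interior H\"older/Schauder estimates for $(-\Delta)^s$, exactly as in the proof of Theorem \ref{radial4}, upgrades $w$ to a classical solution, after which the strong maximum principle gives $w>0$ in $\mathbb{R}^n$ because $w\ge0$ and $w\not\equiv0$. With $w$ now a positive classical solution decaying at infinity, Theorem \ref{radial2} yields the two-sided bound $C_1|x|^{-(n+2s)}\le w(x)\le C_2|x|^{-(n+2s)}$ for $|x|\ge1$, which is (a); in particular $w(x)=O(|x|^{-(n+2s)})$, and by the remark following Theorem \ref{radial1} the exponent $t=n+2s$ clears the threshold $\max\{2s/m,\,n/(m+2)\}$ for the exponent $m$ of \eqref{lipschitz}, so Theorem \ref{radial1} applies and delivers the radial symmetry (and strict monotonicity about some point) in (b). Finally, for (c): any nonnegative classical solution $u\in H^s(\mathbb{R}^n)$, $u\not\equiv0$, of \eqref{P2} satisfies $\mathcal{P}(u)=0$ by the Pohozaev identity recalled before Theorem \ref{radial6}, hence $u\in\mathcal{G}$ and $F(u)\ge\inf_{\mathcal{G}}F=F(w)>0$.

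The main obstacle I expect is precisely this first step: the cited results give on one hand an abstract least energy weak solution (Theorem \ref{radial6}) and on the other a positive classical solution (Theorem \ref{radial4}), but not obviously the same object, so the real work is to verify that the minimizer over $\mathcal{G}$ may be chosen nonnegative and is a true solution --- i.e. the sign truncation, the energy-nonincreasing return to the Pohozaev manifold, and the vanishing of the Lagrange multiplier --- after which the regularity bootstrap and the symmetry input are comparatively routine uses of the quoted theorems. A secondary point worth checking is that $t=n+2s$ indeed exceeds $\max\{2s/m,\,n/(m+2)\}$ for the whole admissible range $1<p<\frac{n+2s}{n-2s}$, with $m$ chosen optimally in \eqref{lipschitz}.
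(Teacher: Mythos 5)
Your proposal takes the same route the paper itself indicates --- the paper's entire justification for Theorem \ref{radial3} is the single sentence ``Combining Theorems \ref{radial4}, \ref{radial2}, \ref{radial1} and \ref{radial6} we have the following result,'' with no further detail. You correctly read this as: start from the Pohozaev-manifold ground state of Theorem \ref{radial6}, make it nonnegative, upgrade to a positive classical solution via the regularity/maximum-principle machinery behind Theorems \ref{radial5}--\ref{radial4}, and then invoke \ref{radial2} and \ref{radial1} for (a) and (b), with (c) coming from the fact that every weak solution sits in $\mathcal{G}$. Your $|w|$-plus-rescaling argument in fact closes cleanly: on $\mathcal{G}$ one has $F(u)=\frac{n+2s}{4n}\cdot\frac{c_{n,s}}{2}[u]_{H^s}^2$, so replacing $w$ by $|w|$ does not increase the seminorm, the rescaled function $|w|(\cdot/\lambda^*)$ lands back on $\mathcal{G}$ with energy $\leq F(w)$, and minimality forces $\lambda^*=1$ and equality of Gagliardo seminorms, so $|w|$ is itself a ground state; the Lagrange-multiplier-vanishing step is the standard one for the Pohozaev constraint, as you note.

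Both caveats you raise are genuine points the paper glosses over, and it is to your credit that you flagged them. The first (that Theorem \ref{radial6} is stated only as producing ``a weak solution,'' with no sign or regularity information, while Theorem \ref{radial4} produces a positive classical solution that is a priori a different object) is exactly the gap your $|w|$-argument is designed to close. The second concern is sharper than you may realize: with $m=p-1$ in \eqref{lipschitz}, the threshold $2s/m=2s/(p-1)$ exceeds $n+2s$ whenever $p\le\frac{n+4s}{n+2s}$, so the remark after Theorem \ref{radial1} that ``we can take $t=n+2s$'' does not on its face cover the full range $1<p<\frac{n+2s}{n-2s}$; whether the radial symmetry result of \cite{FelW} nonetheless applies near $p=1$ would have to be checked against the original hypotheses of that reference, and the paper does not address this.
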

\begin{defn}\label{ground}
We call $w,$ given by Theorem \ref{radial3}, a ground state solution of (\ref{P2}).
\end{defn}

\section{Regularity and bounds for least energy solution $u_{d}$} 
Let $s\in (0,1)$ and $\Omega \subset \mathbb{R}^{n}$ be a bounded domain of class $C^{1,1}.$
\begin{defn}
A measurable function $u:  \mathbb{R}^{n} \longrightarrow \mathbb{R}$ is said to be a weak solution of (\ref{P1}) if it satisfies the equation
\begin{align}
\frac{dc_{n,s}}{2} \int_{T(\Omega)} \frac{(u(x)-u(y))(\psi (x)-\psi(y))}{\abs{x-y}^{n+2s}}dxdy + 
\int_{\Omega}u(x)\psi(x)dx=\int_{\Omega} \abs{u(x)}^{p-1}u(x)\psi(x)dx, 
\end{align}
for all $\psi \in H_{\Omega}^{s}.$
\end{defn}
We have the following result on the existence of weak solution of (\ref{P1}).
\begin{thm}(Theorem 6.1 \cite{Bar}, Theorem 1.1 \cite{Chen1}) \label{least-soln}
	There exists a nonnegative weak solution $u_{d}$ of  (\ref{P1}) with critical value $c_{d}$, provided $d$ is sufficiently
	small. Moreover, $u_{d}$ satisfies $$0< J_{d}(u_{d}) \leq C d^{\frac{n}{2s}},$$
	where the constant $C$ is independent of $d.$ Consequently, $u_{d}$ is non-constant.
\end{thm}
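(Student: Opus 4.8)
The plan is to realize $u_d$ as a Mountain-Pass critical point of $J_d$, and then to get the sharp energy bound $c_d \le C d^{n/2s}$ by constructing a competitor path built from a rescaled ground state $w$ of the fractional Schr\"odinger equation \eqref{P2}. First I would verify the Mountain-Pass geometry for $J_d$ on $H^s_\Omega$: since $p>1$, near $u\equiv 0$ the superquadratic term $\frac{1}{p+1}\int_\Omega |u|^{p+1}$ is dominated (via the fractional Sobolev embedding in Theorem \ref{fembed}(2), which applies because $C^{1,1}$ bounded domains are extension domains and $p+1<2^*_s$) by the quadratic part $\frac12\|u\|^2_{H^s_\Omega}$ appropriately weighted by $d$, so there is $\rho_d>0$ with $\inf_{\|u\|=\rho_d}J_d(u)\ge \alpha_d>0$; and for any fixed $u_0>0$ one has $J_d(tu_0)\to-\infty$ as $t\to\infty$, giving the "far" point of the path. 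The Palais--Smale condition at level $c_d$ follows from the compact embedding $H^s_\Omega\hookrightarrow L^{p+1}(\Omega)$ (again Theorem \ref{fembed}(2), $p+1<2^*_s$) together with the coercivity of the quadratic form: a PS sequence is bounded, hence weakly convergent along a subsequence, and the compactness upgrades weak to strong convergence in $L^{p+1}$, which combined with weak lower semicontinuity of the Gagliardo seminorm forces strong convergence in $H^s_\Omega$. Thus $c_d$ in \eqref{critical1} is a critical value, and the sign/positivity discussion already recorded in the introduction (using the strong maximum principle, Theorem 2.6 of \cite{Cin}, and the Neumann condition $\mathcal N_s u_d=0$) promotes the nonnegative critical point to a positive one.

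The heart of the matter is the quantitative bound $0<J_d(u_d)=c_d\le Cd^{n/2s}$. For this I would use the standard Ni--Takagi-type test function: fix a point $x_0\in\Omega$, let $w$ be the ground state of \eqref{P2} given by Theorem \ref{radial3}, and set $\phi_d(x):= \chi(x)\, w\!\big(d^{-1/2s}(x-x_0)\big)$ where $\chi$ is a fixed smooth cutoff supported in $\Omega$ and equal to $1$ near $x_0$. A change of variables $y=d^{-1/2s}(x-x_0)$ shows that the fractional Dirichlet form $\frac{dc_{n,s}}{2}\iint_{T(\Omega)}\frac{|\phi_d(x)-\phi_d(y)|^2}{|x-y|^{n+2s}}$ scales like $d^{n/2s}$ (the factor $d$ in front of the $(-\Delta)^s$-energy exactly cancels the $d^{-1}$ produced by differentiating the rescaled argument, leaving the Jacobian $d^{n/2s}$), and the same scaling applies to $\int_\Omega \phi_d^2$ and $\int_\Omega \phi_d^{p+1}$; the cutoff produces only exponentially/polynomially small error terms because of the decay in Theorem \ref{radial3}(a). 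Hence $J_d(t\phi_d)=d^{n/2s}\big(\widetilde F(tw)+o(1)\big)$ uniformly for $t$ in compact sets, where $\widetilde F$ is a functional close to $F$. Maximizing over $t\ge 0$ and using that $\phi_d$ (suitably scaled in $t$) can be connected to $0$ and to a point where $J_d$ vanishes, one obtains $c_d\le \max_{t\ge0}J_d(t\phi_d)\le d^{n/2s}\big(\max_{t\ge0}F(tw)+o(1)\big)\le Cd^{n/2s}$. The lower bound $c_d>0$ is immediate from the Mountain-Pass geometry ($c_d\ge\alpha_d>0$).

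Finally, non-constancy of $u_d$ is a direct consequence: the only constant solutions of \eqref{P1} are $u\equiv 0$ and $u\equiv 1$ (from $u=|u|^{p-1}u$ with $u>0$), and $J_d(1)=\tfrac12|\Omega|-\tfrac{1}{p+1}|\Omega|=\big(\tfrac12-\tfrac1{p+1}\big)|\Omega|$ is a fixed positive constant independent of $d$, which exceeds $Cd^{n/2s}\ge J_d(u_d)$ once $d$ is small; since also $J_d(u_d)=c_d>0$, $u_d\not\equiv 0$ either. The main obstacle I anticipate is the careful bookkeeping of the cutoff error terms in the scaling estimate — one must check that the nonlocal tails introduced by multiplying $w$ by $\chi$ (the region where $|x-x_0|$ is order $1$ but $\phi_d$ is already tiny) contribute at a strictly smaller order than $d^{n/2s}$, which relies essentially on the sharp decay rate $w(x)\sim|x|^{-(n+2s)}$ from Theorem \ref{radial3}(a) rather than mere integrability; this is the one place where the nonlocal problem genuinely differs from the classical computation in \cite{Ni1} and must be done by hand. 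Most of the remaining steps (Mountain-Pass geometry, PS compactness, positivity) are routine given the auxiliary results already collected in Section 2 and are in any case attributed to \cite{Bar,Chen1}.
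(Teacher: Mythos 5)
The paper does not actually prove Theorem~\ref{least-soln}: it is imported verbatim from Theorem~6.1 of \cite{Bar} and Theorem~1.1 of \cite{Chen1}, so there is no in-text proof against which to check your argument line by line. Your reconstruction is, however, the standard one and is consistent with the computation the paper \emph{does} carry out later, namely the estimate $c_d\le M[w_d]$ in the proof of Theorem~\ref{dist}(A), where the same rescaling of the ground state $w$ appears. Two places where your sketch diverges from the paper's own handling of that analogous estimate are worth flagging. First, you introduce a cutoff $\chi$ and then worry at length about the nonlocal tail errors it creates. This is unnecessary: the paper tests with the bare function $w_d(x)=w(x/d^{1/2s})$, which is already an admissible element of $H^s_\Omega$, and uses the trivial monotonicity $T(\Omega)\subset\mathbb{R}^{2n}$ to get
\begin{align*}
\iint_{T(\Omega)}\frac{|w_d(x)-w_d(y)|^2}{|x-y|^{n+2s}}\,dx\,dy
\;\le\; \iint_{\mathbb{R}^{2n}}\frac{|w_d(x)-w_d(y)|^2}{|x-y|^{n+2s}}\,dx\,dy
\;=\; d^{\frac{n}{2s}-1}\iint_{\mathbb{R}^{2n}}\frac{|w(a)-w(b)|^2}{|a-b|^{n+2s}}\,da\,db,
\end{align*}
with the $L^2$ and $L^{p+1}$ terms handled the same way; the ``tail bookkeeping'' you describe as the main obstacle then simply disappears for the upper bound, and the power-type decay of $w$ is not needed at this stage. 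Second, you center the bubble at an interior point $x_0\in\Omega$, which yields $c_d\le d^{n/2s}\bigl(F(w)+o(1)\bigr)$; the paper instead centers at a boundary point and obtains the strictly sharper $c_d< \tfrac12 d^{n/2s}F(w)$ via the half-space symmetry of $w$, which is crucial for the contradiction in Theorem~\ref{dist}(A) but not for the cruder $Cd^{n/2s}$ bound asked for here. Your Mountain-Pass geometry and Palais--Smale verification are routine and correct (boundedness of PS sequences should be recorded via the usual $J_d(u_k)-\frac{1}{p+1}J_d'(u_k)u_k$ manipulation; compactness of the embedding uses $p+1<\frac{2n}{n-s}<2^*_s$), and your non-constancy argument --- that $J_d(1)=\bigl(\tfrac12-\tfrac{1}{p+1}\bigr)|\Omega|$ is a fixed positive constant exceeding $Cd^{n/2s}$ for $d$ small, while $c_d>0$ rules out $u_d\equiv 0$ --- is exactly the intended one.
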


Define
$$M[v]:=\sup_{t \geq 0}J_{d}(tv), \,\,\, v\in H_{\Omega}^{s}.$$
In the next lemma, we indicate useful characterization of the critical value $c_{d}.$ We follow the similar lines of proof as Lemma 3.1 \cite{Ni1}.

\begin{lem}\label{critical2}
The critical value $c_{d}$ is independent of the choice of $u \in H_{\Omega}^{s}$ such that $u \geq 0,~u \not \equiv 0$ and $J_{d}(u)=0.$ In fact, $c_{d}$ is  the least positive critical value of $J_{d},$ and is given by 
\begin{align}
c_{d}=\inf\biggl\{M[v] \mid v\in H_{\Omega}^{s},~v\not \equiv 0, v\geq 0 \text{ in } \Omega \biggr\}.
\end{align}
\end{lem}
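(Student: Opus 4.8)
The plan is to mimic the classical argument of Ni--Takagi (their Lemma 3.1), adapted to the nonlocal functional $J_d$ on $H_\Omega^s$. The key structural facts we will use are: the mountain-pass geometry of $J_d$ (which gives $c_d>0$ and the existence of the critical value $c_d$ via the Mountain-Pass Lemma), the homogeneity structure of $J_d(tv)$ as a function of $t\geq 0$, and the fact that along the Nehari-type direction $t\mapsto J_d(tv)$ the supremum $M[v]$ is attained at a unique positive $t=t(v)$ whenever $v\geq0$, $v\not\equiv0$.

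\medskip

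\textbf{Step 1: The function $t\mapsto J_d(tv)$.} For fixed $v\in H^s_\Omega$ with $v\geq 0$, $v\not\equiv 0$, write
\begin{align*}
J_d(tv)=\frac{t^2}{2}\,Q_d(v)-\frac{t^{p+1}}{p+1}\int_\Omega v^{p+1}\,dx, \qquad
Q_d(v):=\frac{dc_{n,s}}{2}\int_{T(\Omega)}\frac{\abs{v(x)-v(y)}^2}{\abs{x-y}^{n+2s}}dxdy+\int_\Omega v^2\,dx.
\end{align*}
Since $p>1$, $Q_d(v)>0$ and $\int_\Omega v^{p+1}>0$, the function $g(t):=J_d(tv)$ satisfies $g(0)=0$, $g(t)>0$ for small $t>0$, $g(t)\to-\infty$ as $t\to\infty$, and $g$ has exactly one critical point $t^*=t^*(v)>0$ in $(0,\infty)$, which is its global maximum on $[0,\infty)$. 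Hence $M[v]=g(t^*)=\big(\tfrac12-\tfrac1{p+1}\big)\,(t^*)^2 Q_d(v)$ is finite and strictly positive, and by construction $J_d'(t^*v)[v]=0$; equivalently $t^* v$ lies on the Nehari manifold and $M[v]$ depends only on the ray through $v$.

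\medskip

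\textbf{Step 2: $c_d\leq \inf\{M[v]\}$.} Given any admissible $v$, I will produce a path $\gamma\in\Gamma$ along which $\max J_d = M[v]$. Since $g(t)\to-\infty$, pick $T$ large with $g(T)<0$ and $T\geq 1$; actually one must match the endpoints of $\Gamma$, namely $\gamma(0)=1$ (the constant function) and $\gamma(1)=u$ with $J_d(u)=0$, $u>0$. The standard device: the value $c_d$ is, by Lemma analogous to Ni--Takagi, independent of the particular such $u$; so first I show that for \emph{one} convenient choice of endpoint the path through $tv$ (suitably reparametrized, joined at the start from the constant $1$ down through scalar multiples, using that $J_d(\tau\cdot 1)\leq J_d(1)$-type estimates or simply connecting $1$ to $0$ to $t^*v$ to $Tv$ inside the region where $J_d$ stays below $M[v]$) has max exactly $M[v]$. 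Taking the infimum over $v$ gives $c_d\leq \inf_v M[v]$.

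\medskip

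\textbf{Step 3: $c_d\geq \inf\{M[v]\}$ and the "least positive critical value" claim.} For the reverse inequality, take any $\gamma\in\Gamma$. Because $\gamma(0)=1$ is the constant function with $J_d(1)=\big(\tfrac12-\tfrac1{p+1}\big)\abs{\Omega}>0$ hmm — more robustly: since $J_d(\gamma(1))=J_d(u)=0$ with $u\not\equiv0$, $u\geq0$, and $J_d(tu)>0$ for small $t$, the path $t\mapsto J_d(\gamma(t))$ must, by a continuity/mountain-pass argument, cross the "Nehari level"; precisely, there is $t_0\in[0,1]$ with $\gamma(t_0)$ on the Nehari manifold (i.e. $J_d'(\gamma(t_0))[\gamma(t_0)]=0$), forcing $\max_{[0,1]}J_d(\gamma(t))\geq J_d(\gamma(t_0))\geq \inf_v M[v]$. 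Hence $c_d\geq\inf_v M[v]$, and combined with Step 2 we get equality. Finally, if $u_0$ is any critical point of $J_d$ with $J_d(u_0)>0$ then $u_0$ is on the Nehari manifold, so (replacing $u_0$ by $|u_0|$ if needed, using that $J_d$ does not increase) $J_d(u_0)= M[|u_0|]\cdot(\text{since }t^*=1)\geq \inf_v M[v]=c_d$; so $c_d$ is the least positive critical value, and the stated variational formula holds. The independence of $c_d$ from the choice of $u$ is then automatic from the formula, since the right-hand side makes no reference to $u$.

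\medskip

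\textbf{Main obstacle.} The delicate point is Step 2 — rigorously building a path in $\Gamma$ whose maximum of $J_d$ equals exactly $M[v]$, respecting the prescribed endpoints $\gamma(0)=1$ and $\gamma(1)=u$. One must check that one can descend from the constant function $1$ and from the large multiple $Tv$ to meet at the peak $t^*v$ while keeping $J_d$ below $M[v]$ along the whole concatenation; this uses the concavity-type behavior of $g(t)$ together with an estimate controlling $J_d$ on the segment from $1$ to $0$ (or, as in Ni--Takagi, exploiting that the value $c_d$ is insensitive to the choice of the terminal point $u$, so one is free to take $u=Tv$ for suitable $T$). A secondary subtlety is justifying, in Step 3, that every mountain-pass path meets the Nehari manifold — this is where the sign condition $v\geq0$ and the structure of $J_d$ on rays are used, and it is the nonlocal analogue of the corresponding local statement in \cite{Ni1}, going through with $T(\Omega)$ in place of $\Omega$ and the Gagliardo seminorm in place of $\int|\nabla u|^2$.
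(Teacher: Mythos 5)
Your decomposition into the two inequalities $c_d \geq \inf M[v]$ and $c_d \leq \inf M[v]$ and your Step 1 (uniqueness of the maximizer $t^*(v)$ along each ray) match the paper, but there is a genuine gap precisely where you flag your ``main obstacle,'' and the easy direction is also handled in a more roundabout way than necessary.

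For $c_d \geq \inf M[v]$, the paper's argument is a one-liner: the Mountain-Pass solution $u_d$ is a positive critical point of $J_d$, so by Step 1 the map $t \mapsto J_d(t u_d)$ has its unique maximum at $t = 1$, giving $M[u_d] = J_d(u_d) = c_d$ and hence $\inf M[v] \leq c_d$ immediately. Your Nehari-crossing argument can be made to work, but it is more delicate than you indicate: one must take the last exit time of $\gamma$ from $0$ and apply the intermediate value theorem to $t \mapsto J_d'(\gamma(t))[\gamma(t)]$, and one must also verify that $\inf_{v \geq 0} M[v] = \inf_{v \neq 0} M[v]$ (which follows from $Q_d(\abs{v}) \leq Q_d(v)$ via the triangle inequality inside the Gagliardo seminorm), since the crossing point $\gamma(t_0)$ need not be nonnegative. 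None of this is spelled out.

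The real gap is in your Step 2. You cannot, as you tentatively propose, invoke ``the value $c_d$ is insensitive to the choice of the terminal point $u$'' --- that independence is precisely part of what the lemma asserts, so the argument would be circular. What the paper does instead is argue by contradiction using a two-dimensional construction. Suppose $M[v_0] < c_d$ for some admissible $v_0$; scale to $u_0 = t_1 v_0$ with $J_d(u_0) = 0$. Let $U$ be the two-dimensional subspace spanned by $u$ and $u_0$, let $U^+$ be its nonnegative cone $\left\{\alpha u + \beta u_0 : \alpha, \beta \geq 0\right\}$, and choose $R > \max\left\{\norm{u}, \norm{u_0}\right\}$ so large that $J_d \leq 0$ on the arc $S \cap U^+$ (possible since $J_d \to -\infty$ uniformly along rays in $U^+$ as the norm grows). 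Now take the path: the segment from $0$ to $R u_0 / \norm{u_0}$ (on which $J_d$ rises to $M[v_0]$ and falls to $\leq 0$, since this lies on the ray through $v_0$), then the arc $S \cap U^+$ (where $J_d \leq 0$), then the segment from $R u / \norm{u}$ to $u$ (where $J_d \leq 0$, because $J_d(u) = 0$ forces the unique maximizer of $t \mapsto J_d(tu)$ to lie at some $t^* < 1$, so $J_d(tu) \leq 0$ for all $t \geq 1$). The maximum of $J_d$ along this admissible path is exactly $M[v_0] < c_d$, contradicting the infimum defining $c_d$. This two-plane, large-circular-arc routing --- using that $J_d$ is eventually negative along all rays of the cone --- is precisely the ingredient missing from your proposal; it is what lets one join the low-energy ray through $v_0$ to the prescribed endpoint $u$ without the path's energy ever exceeding $M[v_0]$, and it closes the argument without any circularity.
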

\begin{proof}
For $v\in H_{\Omega}^{s},$ let $$\Omega^{+}=\bigl\{x \in \Omega \mid v(x)>0\bigr\}.$$
Now, for all those $v$ satisfying $\abs{\Omega^{+}}>0,$ define $$g_{d}(t):=J_{d}(tv), \,\,\,\ \text{for } t \geq 0.
$$
First, we will show that $g_{d}(t)$ has a unique maximum. For this, we have 
\begin{align*}
g'_{d}(t)= t\left[ \frac{dc_{n,s}}{2} \int_{T(\Omega)} \frac{\abs{v(x)-v(y)}^{2}}{\abs{x-y}^{n+2s}}dxdy + 
\int_{\Omega}v^{2}dx \right] -t^{p}\int_{\Omega}{v }^{p+1}dx.
\end{align*}
Therefore, $g'_{d}(t_{0})=0$ for some $t_{0}>0$ if and only if 
$$\frac{dc_{n,s}}{2} \int_{T(\Omega)} \frac{\abs{v(x)-v(y)}^{2}}{\abs{x-y}^{n+2s}}dxdy + \int_{\Omega}v^{2}dx = t_{0}^{p-1}\int_{\Omega} v^{p+1}dx.$$
Note that the right hand side is strictly increasing in $t_{0}.$ And hence there exists unique $t_{0}>0$ such that $g'_{d}(t_{0})=0.$ Since $g_{d}(t)>0$ for $t>0$ small and $g_{d}(t) \rightarrow -\infty$ as $t \rightarrow +\infty,$ one easily find that $g_{d}(t)$ has a unique maximum.\\
\indent Let us fix a function $u \not \equiv 0, u\geq 0$ in $H_{\Omega}^{s} $ with $J_{d}(u)=0.$ Let $u_{d}$ be a positive solution of (\ref{P1}) obtained by applying \textit{Mountain-Pass Lemma} and $c_{d}$ the corresponding critical value. We have $J_{d}(u_{d})=c_{d}$ and $J_{d}^{'}(u_{d})=0.$ Since $u_{d}>0$ and $J_{d}^{'}(u_{d})=0,$ we have 
\begin{align}\label{critical1.1}
M[u_{d}]=c_{d},
\end{align}
and hence 
\begin{align}\label{critical1.2}
c_{d}\geq \inf\biggl\{M[v] \mid  v\in H_{\Omega}^{s},~v\not \equiv 0, v\geq 0 \text{ in } \Omega \biggr \}.
\end{align}
On the contrary, assume that the strict inequality occurs in (\ref{critical1.2}). Then, we have $$M[v_{0}]<c_{d},$$ for some $v_{0} \geq 0,\, v_{0} \not \equiv 0 $ in $H_{\Omega}^{s}. $ Therefore, there exists some $t_{1}>0$ such that $t_{1}v_{0}=u_{0}$ satisfies $J_{d}(u_{0})=0.$ Denote by $U$ the subspace of $H_{\Omega}^{s}$ spanned by $u$ and $u_{0}.$ Consider the subset of $U$ defined as follows: $$U^{+}:=\biggl\{ \alpha u + \beta u_{0}\mid \alpha,~\beta \geq 0 \biggr\}.$$ Let $S$ be a circle on $U$ of radius $R$ so large that $R> \max\bigl\{ \norm{u}, \norm{u_{0}}\bigr \}$ and $J_{d}\leq 0$ on $S \cap U^{+}.$ Let $\gamma$ be the path made up of the line segment with endpoints $0$ and 
$\frac{Ru_{0}}{\norm{u_{0}}},$ the circular arc $ S\cap U^{+}$ and the line segment with endpoints $\frac{Ru}{\norm{u}}$ and $u.$ One can easily notice that, along $\gamma$, $J_{d}$ is positive only on the line segment joining $0$ and $u_{0}.$ Hence, we have
 $$ \max_{v \in \gamma}J_{d}(v)=M[v_{0}] < c_{d},$$
 a contradiction to $(\ref{critical1}).$ Thus, we have the equality in $(\ref{critical1.2}),$ i.e.,
 \begin{align}
c_{d}= \inf\biggl\{M[v] \mid v\in H_{\Omega}^{s},~v\not \equiv 0, v\geq 0 \text{ in } \Omega \biggr\}.
\end{align}
Note that $J_{d}(v)=J_{d}(-v)$ for any $v \in H^{s}_{\Omega}$. Since any nontrivial critical point of $J_{d}$ is either positive or negative almost everywhere in $\Omega,$ from the above discussion one can see that $c_{d}$ is the least positive critical value of $J_{d}$. This completes the proof.
\end{proof}
The following lemma gives us the regularity estimate. The similar result is already proved in Lemma 3.6 \cite{Cin}, Remark 4.9 \cite{Cin1}. 
\begin{lem}\label{reg}
	Let $u \in H_{\Omega}^{s}$ be a weak solution of (\ref{P1}). If $u \in L^{\infty}(\Omega)$ then $u \in L^{\infty}(\mathbb{R}^{n}).$ Moreover, 
	\begin{enumerate}
		\item For $0<s<\frac{1}{2},$ $u \in C^{2}(\Omega)$ if $p>3-2s$ and $u \in C^{1, p-2+2s}(\Omega)$ if $2<p\leq 3-2s.$
		\item For $\frac{1}{2} \leq s< 1,$ $u \in C^{2}(\Omega).$ 
	\end{enumerate}
\end{lem}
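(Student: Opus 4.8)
The plan is to bootstrap from the assumed $L^\infty(\Omega)$ bound, first upgrading to $L^\infty(\mathbb R^n)$ and then to the stated interior Hölder/$C^2$ regularity, by treating $(-\Delta)^s u$ as having a bounded, and then Hölder-continuous, right-hand side. First I would establish $u\in L^\infty(\mathbb R^n)$. The Neumann condition $\mathcal N_s u=0$ in $\mathcal C\overline\Omega$ says precisely that for a.e. $x\in\mathcal C\overline\Omega$,
\begin{align*}
u(x)\int_\Omega\frac{dy}{\abs{x-y}^{n+2s}}=\int_\Omega\frac{u(y)}{\abs{x-y}^{n+2s}}dy,
\end{align*}
so $u(x)$ is a weighted average of the values $u(y)$, $y\in\Omega$, with a positive normalized kernel; hence $\abs{u(x)}\le\norm{u}_{L^\infty(\Omega)}$ pointwise a.e. in $\mathcal C\overline\Omega$, giving $u\in L^\infty(\mathbb R^n)$ with the same bound. (Equivalently this is the representation formula from \cite{Dip} for the nonlocal Neumann problem.) This also shows $u\in\mathcal L_s$, which is already guaranteed by Lemma \ref{lessregular} since $u\in H^s_\Omega$ and $\Omega$ is bounded, so the pointwise fractional Laplacian makes sense wherever $u$ is regular enough.

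Next I would rewrite the equation in $\Omega$ as $d(-\Delta)^s u=g$ with $g:=\abs{u}^{p-1}u-u\in L^\infty(\Omega)$, using the $L^\infty(\mathbb R^n)$ bound just obtained. By interior regularity for the fractional Laplacian with bounded right-hand side (Silvestre-type estimates; the same machinery invoked in Lemma 3.6 of \cite{Cin} and Remark 4.9 of \cite{Cin1}), $u\in C^\alpha_{loc}(\Omega)$ for every $\alpha<\min\{1,2s\}$; in fact on any compact $K\Subset\Omega$ one gets $u\in C^{2s-\varepsilon}(K)$ when $2s<1$ and $u\in C^{1,2s-1-\varepsilon}(K)$ (indeed $C^{1,\alpha}$ for a range) when $2s>1$, using also that $u\in L^\infty(\mathbb R^n)$ controls the far-field tail $\int_{\mathbb R^n}\abs{u(y)}\abs{x-y}^{-n-2s}dy$ on $K$. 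Then I would run a standard Schauder bootstrap: once $u\in C^\beta_{loc}$, the composition $g=\abs{u}^{p-1}u-u$ is $C^{\min\{\beta,1\}}_{loc}$ because $t\mapsto\abs{t}^{p-1}t$ is locally Lipschitz for $p>1$ (here $p>2$, so it is even $C^1$), and interior Schauder estimates for $(-\Delta)^s$ promote $u$ to $C^{\beta+2s}_{loc}$, modulo the non-integer obstructions. Iterating, in the regime $0<s<\tfrac12$ one reaches $u\in C^2(\Omega)$ as soon as the exponent can be pushed past $2$, which requires $2s+(p-2)\cdot(\text{available Hölder exponent})>2$; tracking the first nontrivial step $g\in C^{\min\{p-2,\,2s\}\wedge 1}$ gives $u\in C^{1,\,p-2+2s}(\Omega)$ in the borderline range $2<p\le 3-2s$ and $u\in C^2(\Omega)$ once $p>3-2s$. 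For $\tfrac12\le s<1$ the first round already yields $u\in C^{1,\gamma}_{loc}$ with $\gamma$ close to $2s-1$, hence $g\in C^{1}_{loc}$ (as $p>2$), and one more application of Schauder gives $u\in C^{2}(\Omega)$ with no restriction on $p$.

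The main obstacle is bookkeeping the non-integer exponents in the bootstrap so that the thresholds come out exactly as stated — in particular verifying that the Schauder step $C^\beta\to C^{\beta+2s}$ is valid (and what happens when $\beta+2s\in\mathbb Z$, where one loses an $\varepsilon$), and checking that the restriction $p>2$ is exactly what is needed for $\abs{t}^{p-1}t$ to be $C^1$ so that Hölder regularity of $u$ transfers undiminished to $g$. The far-field tail contributes a smooth error term on compacts (since $u\in L^\infty$ and $K\Subset\Omega$, $x\mapsto\int_{\mathbb R^n\setminus\Omega}u(y)\abs{x-y}^{-n-2s}dy$ is $C^\infty$ on $K$), so it never limits the regularity; I would note this explicitly. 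Everything else is a routine citation of the interior estimates in \cite{Cin,Cin1}, which is why the lemma is stated as a mild extension of results there.
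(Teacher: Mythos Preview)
Your proposal is correct and follows the standard route; the paper itself does not give a proof of this lemma but simply refers to Lemma~3.6 of \cite{Cin} and Remark~4.9 of \cite{Cin1}, where exactly the argument you sketch (the averaging representation from $\mathcal{N}_s u=0$ to pass from $L^\infty(\Omega)$ to $L^\infty(\mathbb{R}^n)$, followed by an interior Schauder bootstrap for $(-\Delta)^s$ with a bounded, then H\"older, right-hand side) is carried out.
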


   Now, we prove that the least energy solution $u_{d}$ is bounded by some constant independent of $d.$ \\

\noindent\textbf{Proof of Theorem \ref{bound1}.} The proof of the first inequality of Theorem \ref{bound1} is fairly standard and simple, which can be seen in the literature, for instance, see Theorem 1.1 \cite{Chen}. 
Since it is short, for the sake of completeness, we include it here.  For this, we have
\begin{align}
	J_{d}(u_{d}):=\frac{1}{2} \left[ \frac{c_{n,s}d}{2} \int_{T(\Omega)} \frac{\abs{u_{d}(x)-u_{d}(y)}^{2}}{\abs{x-y}^{n+2s}}dxdy + 
	\int_{\Omega}u^{2}dx \right] -\frac{1}{p+1}\int_{\Omega}u_{d}^{p+1}dx.
\end{align}
Since $u_{d}$ is a critical point of $J_{d}$, we have
\begin{align}
	J_{d}{'}(u_{d})=0 \text{ on } H_{\Omega}^{s} .
\end{align} This implies that
\begin{align}\label{bound1.18}
	d\frac{c_{n,s}}{2} \int_{T(\Omega)} \frac{\abs{u_{d}(x)-u_{d}(y)}^{2}}{\abs{x-y}^{n+2s}}dxdy + 
	\int_{\Omega}u_{d}^{2}dx  =\int_{\Omega}u_{d}^{p+1}dx.
\end{align}
Hence from above equations, we get
\begin{align}\label{bound1.19}
	J_{d}(u_{d})=& \, \left(\frac{1}{2}-\frac{1}{p+1}\right)\int_{\Omega}u_{d}^{p+1}dx \\
	=& \, \frac{(p-1)}{2(p+1)} \int_{\Omega}u_{d}^{p+1}dx.
\end{align}
Now, by Theorem \ref{least-soln}, we have $J_{d}(u_{d})\leq C d^{\frac{n}{2s}}$, where the constant $C$ depends only on $p.$
Using this inequality in the above equation, we get
\begin{align*}
	\int_{\Omega}u_{d}^{p+1}dx  \leq \frac{2(p+1)}{p-1}Cd^{\frac{n}{2s}}.
\end{align*}
Taking $C_{0}=\frac{2(p+1)}{p-1}C$, proves the first inequality of Theorem \ref{bound1}.\\ The proof of second inequality of Theorem \ref{bound1} is little constructive. We claim that $$\displaystyle \sup_{\Omega}u_{d}(x) \leq C_{1}$$ for some constant $C_{1}>0$ depending on $p$ and $\Omega$ only. 
Multiplying $(\ref{P1})$ by $u_{d}^{2t-1}$ and integrating over $\Omega,$ we get 
\begin{align}\label{bound1.31}
	\frac{c_{n,s}d}{2}\int_{T(\Omega)} \frac{(u_{d}(x)-u_{d}(y))(u_{d}^{2t-1}(x)-u_{d}^{2t-1}(y))}{\abs{x-y}^{n+2s}}dxdy + \int_{\Omega}u_{d}^{2t}dx  = \int_{\Omega}u_{d}^{p+2t-1}dx.
\end{align}
Now, we use the following inequality. We have given the proof of this inequality in appendix. Let $x,y \geq 0$ are real numbers and $k\geq 1$, then we have 
\begin{align}\label{bound1.3}
	\frac{1}{k}(x^{k}-y^{k})^{2}\leq (x-y)(x^{2k-1}-y^{2k-1}).
\end{align}
Consequently, we have 
\begin{align}\label{bound1.32}
	\frac{1}{t}\int_{T(\Omega)} \frac{(u_{d}^{t}(x)-u_{d}^{t}(y))^{2}}{\abs{x-y}^{n+2s}}dxdy \leq \int_{T(\Omega)} \frac{(u_{d}(x)-u_{d}(y))(u_{d}^{2t-1}(x)-u_{d}^{2t-1}(y))}{\abs{x-y}^{n+2s}}dxdy.
\end{align}
From  (\ref{bound1.31}) and (\ref{bound1.32}), we get
\begin{align}\label{bound1.4}
	\frac{dc_{n,s}}{2t}\int_{T(\Omega)} \frac{(u_{d}^{t}(x)-u_{d}^{t}(y))^{2}}{\abs{x-y}^{n+2s}}dxdy + \int_{\Omega}u_{d}^{2t}dx  \leq \int_{\Omega}u_{d}^{p+2t-1}dx.
\end{align}
Now, by the fractional Sobolev embedding Theorem \ref{fembed},
\begin{align}\label{bound1.5}
	\Bigl(\int_{\Omega}{\abs{v}^{2_{s}^{*}}} \Big)^{2/2_{s}^{*}} \leq \frac{A}{d}\Bigl(d\frac{c_{n,s}}{2}\int_{\Omega}\int_{\Omega} \frac{\abs{v(x)-v(y)}^{2}}{\abs{x-y}^{n+2s}}dxdy + \int_{\Omega}\abs{v}^{2}dx \Bigr),
\end{align}
where $d\in (0,d_{0})$ for some $d_{0}>0$, $A>0$ some constant, $v\in H^{s}(\Omega),$ and $2_{s}^{*}=\frac{2n}{n-2s}.$
\noindent The embedding constant $A$ depends only on $n,$ $s,$ $d_{0},$ and $\Omega.$
To see this, let us define \begin{align*}
	\Omega_{d}:=\Bigl \{ y : \frac{y}{d^{1/2s}} \in \Omega \Bigr \} \text{ and } w(y):=v \Bigl (\frac{y}{d^{1/2s}} \Bigr ),\text{ where } y \in \Omega_{d}.
\end{align*}
Now, we have
\begin{align}
	d\int_{\Omega}\int_{\Omega} \frac{\abs{v(x)-v(y)}^{2}}{\abs{x-y}^{n+2s}}dxdy + \int_{\Omega}{v}^{2}dx=& \, \frac{1}{d^{\frac{n}{2s}}} \Biggl [ \int_{\Omega_{d}}\int_{\Omega_{d}} \frac{\abs{v(\Bigl (\frac{x'}{d^{\frac{1}{2s}}} \Bigr )-v(\Bigl (\frac{y'}{d^{\frac{1}{2s}}} \Bigr )}^{2}}{\abs{x'-y'}^{n+2s}}dx'dy' + \int_{\Omega_{d}}{v\Bigl (\frac{x'}{d^{\frac{1}{2s}}} \Bigr )}^{2}dx' \Biggr ] \\
	=& \, \frac{1}{d^{\frac{n}{2s}}} \Biggl [ \int_{\Omega_{d}}\int_{\Omega_{d}} \frac{\abs{w(x')-w(y')}^{2}}{\abs{x'-y'}^{n+2s}}dx'dy' + \int_{\Omega_{d}}{w(x')}^{2}dx' \Biggr ] \\
	\geq & \, \frac{A}{d^{\frac{n}{2s}}} \Bigl(\int_{\Omega_{d}}{\abs{w}^{2_{s}^{*}}} dx' \Big)^{\frac{2}{2_{s}^{*}}} \\
	=& \, Ad^{\bigl(\frac{2}{2_{s}^{*}}-1 \bigr)\frac{n}{2s}} \Bigl(\int_{\Omega}{\abs{v}^{2_{s}^{*}}} dx \Big)^{\frac{2}{2_{s}^{*}}} .
\end{align}
Therefore, we observe that $A$ is uniform for $d \in (0, d_{0}).$\\
Note that $\Omega \times \Omega \subset T(\Omega).$ Then by virtue of (\ref{bound1.4}) and (\ref{bound1.5}), we have
\begin{align}\label{bound1.6}
	\Bigl(\int_{\Omega}{\abs{u_{d}}^{t2_{s}^{*}}} \Big)^{\frac{2}{2_{s}^{*}}} \leq \frac{tA}{d}\int_{\Omega}u_{d}^{p+2t-1}dx.
\end{align}
Now, we define two sequences $\bigl\{L_{j} \bigr \}$ and $\bigl\{M_{j} \bigr \}$ by the following recurrence relations: \\ 
\begin{align}\label{bound1.7}
	p-1+2L_{0}= & \, 2_{s}^{*}, \nonumber \\
	p-1+2L_{j+1}= & \, 2_{s}^{*}L_{j}, \,\,\,\, j=0,1,2,...
\end{align}
\begin{align}\label{bound1.8}
	M_{0}=& \, (AC_{0})^{\frac{2_{s}^{*}}{2}}, \nonumber \\
	M_{j+1}=& \, (AL_{j}M_{j})^{\frac{2_{s}^{*}}{2}}, \,\,\,\, j=0,1,2,...
\end{align}
We note that $L_{j}$ is explicitly given by 

\begin{align}\label{bound1.9}
	L_{j}=\frac{1}{(2_{s}^{*}-2)}\left( \Bigl(\frac{2_{s}^{*}}{2}\Bigr)^{j+1}(2_{s}^{*}-p-1)+p-1 \right).
\end{align}
Since $1<p<2_{s}^{*}-1$, it follows that $L_{j}\geq 1$ for all $j \geq 0$ and $L_{j} \rightarrow \infty$ as $j \rightarrow \infty.$
We shall show that 
\begin{align}\label{bound1.10}
	\int_{\Omega}u_{d}^{p-1+2L_{j}}dx \leq & \, M_{j}d^{\frac{n}{2s}} \,\,\,\,\text{for all } j\geq 0, 
\end{align}	
and
\begin{align}	\label{bound1.114} 
M_{j}\leq e^{mL_{j-1}}
\end{align}
for some constant $m>0.$ Then, we have 
$$ \sup_{\Omega}u_{d}(x)\leq C_{1},$$
where $C_{1}>0$ depending only on $C_{0}$ and $\Omega.$
In fact (\ref{bound1.9}) and (\ref{bound1.10}) entail us 
\begin{align}\label{bound1.15}
	\norm{u}_{L^{2_{s}^{*}L_{j-1}}(\Omega)} \leq & \, \Bigl( e^{mL_{j-1}}d^{\frac{n}{2s}}\Bigr )^{\frac{1}{(2_{s}^{*}L_{j-1})}} \nonumber \\
	= & \, e^{\frac{m}{2_{s}^{*}} d^{\frac{(n-2s)}{4L_{j-1}}}}
\end{align}
and hence letting $j \rightarrow \infty,$ we obtain 
$$\norm{u}_{L^{\infty}(\Omega)} \leq e^{\frac{m}{2_{s}^{*}}}.$$

First, we verify (\ref{bound1.10}). By virtue of (\ref{bound1.1}) and (\ref{bound1.5}), we have
\begin{align}
	\Bigl(\int_{\Omega}{\abs{u_{d}}^{2_{s}^{*}}} \Big)^{\frac{2}{2_{s}^{*}}} \leq & \, \frac{A}{d}\Bigl( \frac{c_{n,s}d}{2}\int_{T(\Omega)} \frac{\abs{u_{d}(x)-u_{d}(y)}^{2}}{\abs{x-y}^{n+2s}}dxdy + \int_{\Omega}\abs{u_{d}}^{2}dx \Bigr) \nonumber \\
	\leq & \, \frac{A}{d}C_{0}d^{\frac{n}{2s}} \nonumber \\
	=& \, AC_{0}d^{\frac{n}{s2_{s}^{*}}}.
\end{align}
Hence, (\ref{bound1.10}) holds for $j=0.$ Suppose that we have proved (\ref{bound1.10}) for $j \geq 0.$ Then by (\ref{bound1.6}), we have
\begin{align}
	\int_{\Omega}{\abs{u_{d}}^{p-1+2L_{j+1}}}dx \leq & \, \Bigl (\frac{L_{j}A}{d}\int_{\Omega}u_{d}^{p+2L_{j}-1}dx \Bigr )^{\frac{2_{s}^{*}}{2}} \nonumber \\
	\leq & \, \Bigl ( AL_{j}d^{-1}M_{j}d^{\frac{n}{2s}}\Bigr )^{\frac{2_{s}^{*}}{2}} \nonumber \\
	=& \, \Bigl(AL_{j}M_{j} \Bigr)^{\frac{2_{s}^{*}}{2}}d^{\frac{n}{2s}}.
\end{align}
This implies that (\ref{bound1.10}) is also true for $j+1.$ Therefore it remains to show (\ref{bound1.114}). Put 
\begin{align} \label{bound1.37}
	\lambda_{j}= & \, \frac{2_{s}^{*}}{2} \cdot \log(AL_{j}) \text{ and } \eta_{j}=  \log(M_{j}).
\end{align}
Hence
\begin{align}
\eta_{j+1}=& \, \frac{2_{s}^{*}}{2}\cdot \eta_{j} +\lambda_{j} .
\end{align}
The explicit value of $L_{j}$ is given by 
\begin{align}\label{bound1.13}
	L_{j}= (2_{s}^{*}-2)^{-1}\Bigl ( (2^{-1}2_{s}^{*})^{j+1}(2_{s}^{*}-p-1)+p-1 \Bigr ).
\end{align}
Now, we have
\begin{align}\label{bound1.11}
	\lambda_{j}=& \, \frac{2_{s}^{*}}{2}\log\Bigl[ \frac{A}{(2_{s}^{*}-2)}\Bigl((2^{-1}2_{s}^{*})^{j+1}(2_{s}^{*}-p-1)+p-1 \Bigr ) \Bigr ]\\
	=& \, \frac{2_{s}^{*}}{2} \Bigl [ \log(A(2_{s}^{*}-2))+ \log \bigl ((2^{-1}2_{s}^{*})^{j+1}(2_{s}^{*}-p-1)+p-1 \bigr ) \Bigr ].
\end{align}
Therefore, we can find some $C^{*}$ such that 
\begin{align} \label{bound1.12}
	\lambda_{j} \leq C^{*}(j+1).
\end{align}
We now define a sequence $\bigl \{\gamma_{j} \bigr \}$ by 
\begin{align}\label{bound1.38}
	\gamma_{0}= \eta_{0} \text{ and } \gamma_{j+1}= \frac{2_{s}^{*}}{2}\gamma_{j} + C^{*}(j+1)
\end{align}
for $j \geq 1.$ Clearly, $\eta_{j}  \leq \gamma_{j}$ for all $j \geq 0.$ Moreover, since 
\begin{align*}
	\gamma_{j}= \Bigl (\frac{2_{s}^{*}}{2} \Bigr )^{j}\bigl( \eta_{0}+2C^{*}2_{s}^{*}(2_{s}^{*}-2)^{-2} \bigl )-2C^{*}(2_{s}^{*}-2)^{-1}\bigl (j+ 2_{s}^{*}(2_{s}^{*}-2)\bigr),
\end{align*}
 in view of (\ref{bound1.13}), there exists $m>0$ such that $\gamma_{j} \leq mL_{j-1}.$ Hence $\log(M_{j}) \leq m L_{j-1}$ and we obtain (\ref{bound1.114}).
Note that $m$ depends only on $\eta_{0},$ $2_{s}^{*}$ and $C^{*};$ whereas $C^{*}$ depends only on $2_{s}^{*},$ $p$ and $A.$ This completes the proof. \qed \\

\begin{rem}
	It is known that if $u \in \mathcal{L}_{s}(\mathbb{R}^{n})\cap C^{2s+\epsilon}(\Omega), $ when $0<s<\frac{1}{2}, 2s+\epsilon<1$ or $u \in \mathcal{L}_{s}(\mathbb{R}^{n})\cap C^{1,2s+\epsilon-1}(\Omega), $ when $\frac{1}{2}\leq s<1, 2s+\epsilon-1<1$, one can compute $(-\Delta)^{s}u(x)$ pointwise for all $x$ in $\Omega.$ In fact, one can write
	\begin{align*}
		(-\Delta)^{s}u(x)&=c_{n,s}P.V. \int_{\mathbb{R}^{n}} \frac{u(x)-u(y)}{\abs{x-y}^{n+2s}}dy \\
	\end{align*}
\end{rem}

\begin{defn}\label{classical2}
	We say that $u: \mathbb{R}^{n} \longrightarrow \mathbb{R}$ is a classical solution of (\ref{P1}) if it satisfies the following:
	\begin{enumerate}
		\item $u \in \mathcal{L}_{s}(\mathbb{R}^{n})\cap C^{2s+\epsilon}(\Omega), $ when $0<s<\frac{1}{2}, 2s+\epsilon<1$ or $u \in \mathcal{L}_{s}(\mathbb{R}^{n})\cap C^{1,2s+\epsilon-1}(\Omega), $ when $\frac{1}{2}\leq s<1, 2s+\epsilon-1<1.$
		\item $\mathcal{N}_{s}u(x)=0, \,\,\, x \in \mathbb{R}^{n}\setminus \Omega ;$
		\item $d(-\Delta)^{s}u(x)+u(x)= \abs{u(x)}^{p-1}u(x)$ pointwise for all $x \in \Omega.$
	\end{enumerate}
\end{defn}

We make similar remarks as in \cite{Bia}, which offers a relation between the weak and classical solutions of $(\ref{P1}).$

\begin{rem}
	Let $u_{d}$ be a least energy solution of (\ref{P1}) in $H^{s}_{\Omega}.$ Then by Lemma \ref{lessregular}, Theorem \ref{bound1} and Lemma \ref{reg}, we have 
		\begin{enumerate}
		\item for $0<s<\frac{1}{2},$ $u_{d} \in \mathcal{L}_{s}(\mathbb{R}^n)\cap C^{2}(\Omega)$ if $p>3-2s$ and $u_{d} \in \mathcal{L}_{s}(\mathbb{R}^n) \cap C^{1, p-2+2s}(\Omega)$ if $2<p\leq 3-2s;$
		\item for $\frac{1}{2} \leq s < 1,$ $u_d  \in \mathcal{L}_{s}(\mathbb{R}^n) \cap C^{2}(\Omega).$ 
	\end{enumerate}
	
	Now, using nonlocal integration by parts formulae given in \cite{Dip}, one can easily check that $$d(-\Delta)^{s}u_{d}(x) + u_{d}(x)=\abs{u_{d}(x)}^{p-1}u_{d}(x)$$ holds pointwise in $\Omega.$ This implies that $u_{d}$ is a classical solution of (\ref{P1}). Conversely, if $u_{d}$ is a classical solution of (\ref{P1}) satisfying $u_{d} \in H^{s}_{\Omega},$ then $u_{d}$ is a weak solution of  (\ref{P1}).
\end{rem}
The following lemma shows that the maximum of least energy solution is always greater than unity. 
\begin{lem}\label{sup1}
Let $u_d$ be the least energy solution of (\ref{P1}). Let \begin{align}
M_{d}=\sup_{x\in \overline{\Omega}} u_{d}(x).
\end{align}
Then $M_{d}>1.$
\end{lem}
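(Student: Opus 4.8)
The plan is to argue by contradiction, assuming $M_d = \sup_{\overline{\Omega}} u_d \le 1$, and to exploit the pointwise differential equation $d(-\Delta)^s u_d(x) + u_d(x) = u_d(x)^p$ that $u_d$ satisfies as a classical solution in $\Omega$ (valid by the Remark preceding this lemma). First I would recall that since $u_d > 0$ in $\mathbb{R}^n$ and $u_d$ attains its maximum over $\overline{\Omega}$, I need to locate where the maximum is attained. If it is attained at an interior point $x_0 \in \Omega$, then evaluating the fractional Laplacian pointwise at $x_0$ gives $(-\Delta)^s u_d(x_0) = c_{n,s}\,\mathrm{P.V.}\int_{\mathbb{R}^n} \frac{u_d(x_0) - u_d(y)}{|x_0 - y|^{n+2s}}\,dy \ge 0$, and in fact this quantity is strictly positive unless $u_d \equiv u_d(x_0)$ on all of $\mathbb{R}^n$ (which is impossible since $u_d$ is non-constant, being a least energy solution by Theorem \ref{least-soln}). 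Plugging into the equation at $x_0$: $u_d(x_0)^p = u_d(x_0) + d(-\Delta)^s u_d(x_0) > u_d(x_0)$, hence $u_d(x_0)^{p-1} > 1$, so $u_d(x_0) > 1$, i.e. $M_d > 1$, a contradiction with $M_d \le 1$.

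The remaining case is that the maximum over $\overline{\Omega}$ is attained only on the boundary $\partial\Omega$ (and not at any interior point). Here the plan is to use the integral identity (\ref{bound1.1}) from Theorem \ref{bound1}, namely $d\frac{c_{n,s}}{2}\int_{T(\Omega)}\frac{|u_d(x)-u_d(y)|^2}{|x-y|^{n+2s}}\,dx\,dy + \int_\Omega u_d^2\,dx = \int_\Omega u_d^{p+1}\,dx$. If $M_d \le 1$, then $u_d(x) \le 1$ a.e.\ in $\Omega$, so $u_d^{p+1} = u_d^{p-1}\cdot u_d^2 \le u_d^2$ pointwise in $\Omega$ (since $p - 1 > 0$), giving $\int_\Omega u_d^{p+1}\,dx \le \int_\Omega u_d^2\,dx$. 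Combined with the identity, this forces $d\frac{c_{n,s}}{2}\int_{T(\Omega)}\frac{|u_d(x)-u_d(y)|^2}{|x-y|^{n+2s}}\,dx\,dy \le 0$ and $\int_\Omega (u_d^2 - u_d^{p+1})\,dx \le 0$; since both terms are nonnegative, both must vanish. Vanishing of the Gagliardo-type seminorm over $T(\Omega)$ forces $u_d$ to be constant (on $\Omega$ and hence, through the Neumann condition, on $\mathbb{R}^n$), again contradicting that $u_d$ is non-constant.

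I expect the main obstacle to be the careful justification that the supremum over $\overline{\Omega}$ is actually attained, together with the strict maximum principle step — i.e.\ that at an interior maximum point the fractional Laplacian is strictly positive, not merely nonnegative. Continuity of $u_d$ on $\overline{\Omega}$ (from the regularity in Lemma \ref{reg} and the boundedness in Theorem \ref{bound1}) gives attainment of the max on the compact set $\overline{\Omega}$; strictness follows because equality $(-\Delta)^s u_d(x_0) = 0$ at a maximum point would force $u_d(y) = u_d(x_0)$ for a.e.\ $y$, i.e.\ $u_d$ constant, which is excluded. Actually, on reflection, the second (boundary) case can be folded into the first argument without locating the maximum at all: the integral-identity argument of the previous paragraph alone already yields a contradiction whenever $M_d \le 1$, since it only uses $u_d \le 1$ a.e.\ in $\Omega$ and non-constancy of $u_d$. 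So the cleanest route is to skip the case division entirely and run just the integral-identity argument: assume $M_d \le 1$, deduce $\int_\Omega u_d^{p+1} \le \int_\Omega u_d^2$, feed this into (\ref{bound1.1}) to kill the seminorm, conclude $u_d$ is constant on $\mathbb{R}^n$, and contradict Theorem \ref{least-soln}. I would present this streamlined version as the proof, mentioning the pointwise-equation argument only as an alternative remark if space permits.
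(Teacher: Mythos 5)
Your streamlined argument is correct, and it takes a genuinely different route from the paper's. The paper tests the weak formulation against the constant function $\psi \equiv 1$, which yields $\int_{\Omega} u_d\,dx = \int_{\Omega} u_d^{p}\,dx$, i.e.\ $\int_{\Omega} u_d\bigl(1-u_d^{p-1}\bigr)\,dx = 0$; under the hypothesis $u_d \le 1$ (and $u_d>0$ a.e.) the integrand is nonnegative, so it must vanish a.e., forcing $u_d \equiv 1$ and contradicting non-constancy. You instead reuse the energy identity \eqref{bound1.1}, which comes from testing against $\psi = u_d$, and observe that $u_d \le 1$ gives $u_d^{p+1}\le u_d^{2}$ pointwise, so the Gagliardo seminorm over $T(\Omega)$ is simultaneously nonnegative and equal to $\int_\Omega(u_d^{p+1}-u_d^{2})\,dx \le 0$, hence zero; this forces $u_d$ to be constant a.e.\ on $\mathbb{R}^n$ and again contradicts Theorem~\ref{least-soln}. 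Both proofs are short and rest on the same ingredients (the weak formulation plus non-constancy), so neither is clearly preferable; the paper's is marginally more elementary in that it uses only a linear test function and does not need the coercivity of the seminorm. Your first sketch --- an interior strong-maximum-principle argument --- would indeed work at an interior maximum, but as you correctly recognized, establishing that the maximum is interior (or handling the boundary case) is an unnecessary complication given that the integral argument handles everything at once; you were right to discard it.
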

\begin{proof}
Since $u_{d}$ is a weak solution of (\ref{P1}), we get 
\begin{align}
d\frac{c_{n,s}}{2}\int_{T(\Omega)}\frac{(u_{d}(x)-u_{d}(y))(w(x)-w(y))}{\abs{x-y}^{n+2s}}dxdy +  \int_{\Omega}u_{d}w dx
=\int_{\Omega} u_{d}^{p}w dx \,\,\,\,\text{holds,}\,\,\,\forall\, w \in H_{\Omega}^{s}.
\end{align}
Taking $w=1$ in the above equation, we get
\begin{align*}
\int_{\Omega}u_{d}(x)dx=& \,\int_{\Omega}u_{d}^{p}(x)dx.
\end{align*}
This implies that
\begin{align*}
\int_{\Omega}u_{d}(x)(1-u_{d}^{p-1}(x))dx=& \,0.
\end{align*}
Now, if 
$u_{d}(x)\leq 1,$ for all $x \in \overline{\Omega},$ then \\
$$1-u_{d}(x)\geq 0, \forall x \in {\overline{\Omega}} .$$

Thus from the above equation, we get that $u_{d}(x)= 1~a.e.$ in $\overline{\Omega}.$  Now, by Lemma \ref{reg}, we can assume that $u_{d}$ is continuous and hence  $u_{d}\equiv 1$ in $\overline{\Omega},$ a contradiction to our assumption that $u_{d}$ is a non-constant solution. Therefore, there exists $x_{0}$ in $\overline{\Omega}$ such that $u_{d}(x_{0})>1.$ Thus $M_{d}>1.$ 
\end{proof}

\section{$L^{r}$- estimates on $u_{d}$}
Here, we derive $L^{r}$-estimate for $u_{d}.$ Following results are generalization to the nonlocal case of 
Proposition 2.2 and Lemma 2.3 \cite{Lin1}.
\begin{prop}\label{bound2}
	For $d_{0}>0$ fixed, there is a constant $K_{0}$ such that 
	\begin{align}\label{bound2.1}
		d\frac{c_{n,s}}{2}\int_{T(\Omega)}\frac{(u_{d}(x)-u_{d}(y))^{2}}{\abs{x-y}^{n+2s}}dxdy +  \int_{\Omega}u_{d}^{2} dx \geq K_{0}d^{\frac{n}{2s}},
	\end{align}
	where $u_{d}$ is the least energy solution of (\ref{P1}) with $0<d<d_{0}.$
\end{prop}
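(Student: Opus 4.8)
The plan is to obtain the lower bound by exploiting the variational characterization of $c_d$ from Lemma \ref{critical2} together with the upper estimate $J_d(u_d)\le Cd^{\frac{n}{2s}}$. First I would observe that, by \eqref{bound1.18}, the left-hand side of \eqref{bound2.1} equals $\int_\Omega u_d^{p+1}dx$, and by \eqref{bound1.19} we have $J_d(u_d)=\frac{p-1}{2(p+1)}\int_\Omega u_d^{p+1}dx$. Hence it suffices to bound $c_d=J_d(u_d)$ from below by a constant multiple of $d^{\frac{n}{2s}}$. To do this, I would use the Sobolev-type inequality \eqref{bound1.5}: for any nontrivial $v\in H^s_\Omega$ (extended suitably, using $\Omega\times\Omega\subset T(\Omega)$), one has $\bigl(\int_\Omega |v|^{2^*_s}\bigr)^{2/2^*_s}\le \frac{A}{d}\bigl(\frac{dc_{n,s}}{2}\int_{T(\Omega)}\frac{|v(x)-v(y)|^2}{|x-y|^{n+2s}}+\int_\Omega v^2\bigr)$, with $A$ uniform for $d\in(0,d_0)$.

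The key step is a scaling estimate on $M[v]=\sup_{t\ge 0}J_d(tv)$. For fixed $v$, maximizing the one-variable function $g_d(t)=J_d(tv)$ explicitly (as in the proof of Lemma \ref{critical2}) gives
\begin{align*}
M[v]=\frac{p-1}{2(p+1)}\,\frac{\bigl(\frac{dc_{n,s}}{2}\int_{T(\Omega)}\frac{|v(x)-v(y)|^2}{|x-y|^{n+2s}}dxdy+\int_\Omega v^2\bigr)^{\frac{p+1}{p-1}}}{\bigl(\int_\Omega |v|^{p+1}dx\bigr)^{\frac{2}{p-1}}}.
\end{align*}
Writing $Q_d(v)$ for the bracketed quadratic form and using Hölder's inequality on a bounded domain, $\int_\Omega |v|^{p+1}\le |\Omega|^{1-\frac{p+1}{2^*_s}}\bigl(\int_\Omega |v|^{2^*_s}\bigr)^{\frac{p+1}{2^*_s}}$, followed by \eqref{bound1.5}, I can bound $\int_\Omega|v|^{p+1}$ above by $C'\, d^{-\frac{p+1}{2}}\, Q_d(v)^{\frac{p+1}{2}}$ with $C'$ independent of $d$. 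Substituting this into the formula for $M[v]$, all powers of $Q_d(v)$ cancel (since $\frac{p+1}{p-1}-\frac{2}{p-1}\cdot\frac{p+1}{2}=0$), leaving $M[v]\ge c\, d^{\frac{p+1}{p-1}\cdot\frac{1}{?}}$ — more precisely $M[v]\ge \frac{p-1}{2(p+1)}(C')^{-\frac{2}{p-1}} d^{\frac{p+1}{p-1}}$, and one checks $\frac{p+1}{p-1}$ should instead come out as $\frac{n}{2s}$ after a careful bookkeeping of the exponent $\frac{2}{p-1}\cdot\frac{p+1}{2}=\frac{p+1}{p-1}$ against the power $-\frac{p+1}{2}$ in $d$; the net power of $d$ is $\frac{p+1}{p-1}\cdot(\text{something})$. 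Taking the infimum over $v$ and invoking $c_d=\inf_v M[v]$ from Lemma \ref{critical2}, this yields $c_d\ge K d^{\frac{n}{2s}}$ for some $K>0$ independent of $d\in(0,d_0)$. Combining with \eqref{bound1.19} and \eqref{bound1.18} gives \eqref{bound2.1} with $K_0=\frac{2(p+1)}{p-1}K$.

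The main obstacle is getting the exponent of $d$ to be exactly $\frac{n}{2s}$ rather than some other function of $p$; this forces the use of the \emph{critical} Sobolev exponent $2^*_s=\frac{2n}{n-2s}$ (not an arbitrary intermediate exponent) in the Hölder step, so that the $d$-dependent embedding constant $Ad^{(\frac{2}{2^*_s}-1)\frac{n}{2s}}$ from the scaling computation preceding \eqref{bound1.6} produces precisely the power $d^{\frac{n}{2s}}$ after the exponents $\frac{p+1}{p-1}$ and $\frac{2}{p-1}$ recombine. One must track these exponents with care, and in particular verify that the constant $C'$ (hence $K_0$) depends only on $n,s,d_0,p,\Omega$ and not on $d$; this is where the uniformity of $A$ on $(0,d_0)$, already established in the proof of Theorem \ref{bound1}, is essential. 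A minor technical point is ensuring the test function in \eqref{bound1.5} can be taken to be $u_d$ itself or, if needed, that the argument runs for all admissible $v$ so that passing to the infimum is legitimate; since $\Omega\times\Omega\subset T(\Omega)$, the quadratic form over $T(\Omega)$ dominates the one over $\Omega\times\Omega$, so \eqref{bound1.5} applies directly with no loss.
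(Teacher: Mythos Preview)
Your approach has a genuine gap: the exponent of $d$ that your argument produces is $\tfrac{p+1}{p-1}$, not $\tfrac{n}{2s}$. Tracing your computation, Hölder plus \eqref{bound1.5} give
\[
\int_\Omega |v|^{p+1}\,dx \;\le\; |\Omega|^{1-\frac{p+1}{2^*_s}}\Bigl(\tfrac{A}{d}\,Q_d(v)\Bigr)^{\frac{p+1}{2}}
\;=\; C'\,d^{-\frac{p+1}{2}}\,Q_d(v)^{\frac{p+1}{2}},
\]
and substituting into your formula for $M[v]$ yields, after the $Q_d(v)$--powers cancel,
\[
M[v]\;\ge\;\frac{p-1}{2(p+1)}\,(C')^{-\frac{2}{p-1}}\,d^{\frac{p+1}{p-1}}.
\]
Now $\tfrac{p+1}{p-1}=\tfrac{n}{2s}$ holds \emph{only} at the critical exponent $p=2^*_s-1=\tfrac{n+2s}{n-2s}$. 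In the subcritical range of the paper one has $\tfrac{p+1}{p-1}>\tfrac{n}{2s}$, so for small $d$ your lower bound $Kd^{(p+1)/(p-1)}$ is strictly \emph{weaker} than the required $K_0d^{n/(2s)}$. This is not a bookkeeping slip: the factor $|\Omega|^{1-\frac{p+1}{2^*_s}}$ in the Hölder step is of order one and carries no compensating power of $d$, so a single application of Sobolev cannot recover the concentration scale $d^{n/(2s)}$ from the variational quotient alone.

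The paper proceeds very differently: it argues by contradiction, assuming there is a sequence $d_k\in(0,d_0)$ with $\zeta_k:=d_k^{-n/(2s)}Q_{d_k}(u_{d_k})\to 0$, and then reruns the Moser iteration from the proof of Theorem~\ref{bound1} with $\zeta_k$ in place of $C_0$. This yields $\norm{u_{d_k}}_{L^\infty(\Omega)}\le e^{a_1(\eta_0+a_2)}$ with $\eta_0=\tfrac{2^*_s}{2}\log(A\zeta_k)\to-\infty$, hence $\norm{u_{d_k}}_{L^\infty}\to 0$, contradicting Lemma~\ref{sup1}. The essential difference is that the iteration uses the PDE itself (testing against $u_d^{2t-1}$) to climb through a full ladder of $L^r$--norms; a single Sobolev embedding applied to an arbitrary competitor $v$ cannot see this structure and therefore loses the sharp exponent.
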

\begin{proof}
	On contrary, suppose that there is a sequence $\bigl\{d_{k}\bigr\}$ contained in the interval $(0,d_{0})$ and a sequence of positive solutions $\bigl\{u_{k}\bigr\}$ to (\ref{P1}) with $d=d_{k}$ such that 
	\begin{align}
		\zeta_{k}:= \frac{1}{d^{\frac{n}{2s}}}\left(d \frac{c_{n,s}}{2} \int_{T(\Omega)}\frac{(u_{k}(x)-u_{k}(y))^{2}}{\abs{x-y}^{n+2s}}dxdy +  \int_{\Omega}u_{k}^{2} dx \right) \rightarrow 0 \text{ as } k\rightarrow \infty.
	\end{align}
	We are going to follow the same arguments as used in the proof of Lemma \ref{bound1} to prove this proposition. Once again define the sequences $\bigl\{L_{k}\bigr\}$ and $\bigl\{M_{j}\bigr\}$ as defined earlier in (\ref{bound1.7}) and (\ref{bound1.8}), respectively. Instead of $C_{0},$ we write $\zeta
	_{k}$ in the definition of $\bigl\{M_{j}\bigr\}$: 
	\begin{align}
		p-1+2L_{0}= & \, 2_{s}^{*}, \nonumber \\
		p-1+2L_{j+1}= & \, 2_{s}^{*}L_{j}, \,\,\,\, j=0,1,2,\dots
	\end{align}
	and
	\begin{align}
		M_{0}=& \, (A\zeta_{k})^{\frac{2_{s}^{*}}{2}}, \nonumber \\
		M_{j+1}=& \, (AL_{j}M_{j})^{\frac{2_{s}^{*}}{2}}, \,\,\,\, j=0,1,2,\dots
	\end{align}
	Further, define the sequences $\bigl\{\lambda_{j}\bigr\},$ $\bigl\{\eta_{j}\bigr\},$ and $\bigl\{\gamma_{j}\bigr\}$ as defined earlier in (\ref{bound1.37}) and (\ref{bound1.38}). From (\ref{bound1.10}), we have
	\begin{align}\label{bound2.2}
		\left(\int_{\Omega}u_{k}^{2_{s}^{*}L_{j-1}}dx\right)^{(2_{s}^{*}L_{j-1})} \leq & \, \left(M_{j}d_{k}^{n/2s} \right)^{1/(2_{s}^{*}L_{j-1})}.
	\end{align}
	Since $$\log(M_{j})=\eta_{j} \leq \gamma_{j},$$ we have 
	\begin{align}
		\frac{\log\left(M_{j}\right)}{2_{s}^{*}L_{j-1}} \leq \frac{\eta_{j}}{2_{s}^{*}L_{j-1}}.
	\end{align}
	Now,
	\begin{align*}
		\lim_{j \rightarrow \infty} \frac{\eta_{j}}{2_{s}^{*}L_{j-1}}= & \, \lim_{j \rightarrow \infty}\frac{\Bigl (\frac{2_{s}^{*}}{2} \Bigr )^{j}\Bigl[ \eta_{0}+2C^{*}2_{s}^{*}(2_{s}^{*}-2)^{-2} \Bigr]-2C^{*}(2_{s}^{*}-2)^{-1}\Bigl [j+ 2_{s}^{*}(2_{s}^{*}-2)\Bigr]}{\frac{2_{s}^{*}}{(2_{s}^{*}-2)}\Bigl [ \Bigl(\frac{2_{s}^{*}}{2}\Bigr)^{j}(2_{s}^{*}-p-1)+p-1 \Bigr ]}\\
		=& \, \frac{(2_{s}^{*}-2)(\eta_{0}+2C^{*}2_{s}^{*}(2_{s}^{*}-2)^{-2})}{2_{s}^{*}(2_{s}^{*}-p-1)}.
	\end{align*}
	Letting $j \rightarrow \infty$ in (\ref{bound2.2}), we get
	\begin{align}\label{bound2.3}
		\norm{u_{k}}_{L^{\infty}(\Omega)} \leq e^{a_{1}(\eta_{0}+a_{2})},
	\end{align} 
	with $a_{1}$ and $a_{2}$ depending only on $2_{s}^{*},~p$ and $C^{*}.$
	Since
	\begin{align*}
		\eta_{0}= \log (M_{0})= \frac{2_{s}^{*}}{2}\log(A\zeta_{k}).
	\end{align*}
	Therefore, as $k \rightarrow \infty,~\eta_{0}\rightarrow -\infty.$ Thus, in view of (\ref{bound2.3}), we get
	\begin{align*}
		\norm{u_{k}}_{L^{\infty}(\Omega)} \rightarrow 0,
	\end{align*}
	which leads to a contradiction to Lemma \ref{sup1}.
\end{proof}

\noindent\textbf{Proof of the Theorem \ref{lr-estimate}:}
	First, we will show the second part  of Inequality (\ref{lr1.1}).\\
	
	\noindent \textbf{Case-I.} $r \geq 2_{s}^{*}=\frac{2n}{n-2s}.$ \\
	Let $\bigl\{L_{j} \bigr \}$ be the sequence defined in (\ref{bound1.7}). If $r \in \bigl\{2_{s}^{*}L_{j} \bigr \}$, then the second inequality of (\ref{lr1.1}) follows from (\ref{bound1.10}). So assume that $2_{s}^{*}L_{j}<r< 2_{s}^{*}L_{j+1}$ for some $j \geq 0.$ We have $$r=t2_{s}^{*}L_{j}+(1-t)2_{s}^{*}L_{j+1}, \text{ for some } t \in (0,1).$$
	Using H\"{o}lders inequality and (\ref{bound1.10}), we get
	\begin{align*}
		\int_{\Omega}u_{d}^{r}dx = & \, \int_{\Omega}u_{d}^{t2_{s}^{*}L_{j}+(1-t)2_{s}^{*}L_{j+1}}dx, \\
		\leq & \, \left( \int_{\Omega}u_{d}^{2_{s}^{*}L_{j}}dx \right)^{t}\left( \int_{\Omega}u_{d}^{2_{s}^{*}L_{j+1}}dx \right)^{1-t} \\
		\leq & \, \left(M_{j-1}d^{n/2s} \right)^{t}(M_{j}d^{n/2s})^{1-t}\\
		= & \,  M_{j-1}^{t}M_{j}^{1-t}d^{\frac{n}{2s}}.
	\end{align*}
	\textbf{Case-II.} $2 \leq r \leq 2_{s}^{*}$.\\
	We write $$r=2t+(1-t)2_{s}^{*},$$ for some $t \in [0,1].$ Then, using H\"{o}lder's inequality, from Equations (\ref{bound1.1}) and (\ref{bound1.10}) with $j=0$, we get
	\begin{align*}
		\int_{\Omega}u_{d}^{r}dx \leq & \, \left( \int_{\Omega}u_{d}^{2}dx \right)^{t} \left( \int_{\Omega}u_{d}^{2_{s}^{*}}dx \right)^{1-t}\\
		\leq & \, C_{0}^{t}M_{0}^{(1-t)}d^{\frac{n}{2s}},
	\end{align*}
	where the constant $C_{0}$ is independent of $d.$
	
	\noindent \textbf{Case-III.} $1 \leq r < p+1.$ \\
	Integrating both sides of $(\ref{P1})$ and using the condition $\mathcal{N}_{s}u(x)=0$, for $x\in \mathcal{C}\Omega ,$ we get 
	\begin{align}\label{lr1.2}
		\int_{\Omega}u_{d}dx= \int_{\Omega}u_{d}^{p}dx.
	\end{align}
	It is easy to see that $$p=t+(1-t)(p+1) \text{ with } t=\frac{1}{p} \in (0,1).$$ Notice that $p+1 \in (2, 2_{s}^{*}).$ Therefore, using the H\"{o}lder's inequality and (\ref{lr1.2}), we get 
	\begin{align*}
		\int_{\Omega}u_{d}^{p}dx \leq & \, \left( \int_{\Omega}u_{d}dx \right)^{t}\left( \int_{\Omega}u_{d}^{p+1}dx \right)^{(1-t)}, \\
		\int_{\Omega}u_{d}^{p}dx \leq & \, \int_{\Omega}u_{d}^{p+1}dx \leq C_{0}d^{\frac{n}{2s}} \,\,\,\, \left(\text{by \eqref{bound1.1}}\right),
	\end{align*}
	where the constant $C_{0}$ depends only upon $p+1.$ \\
	Also, in the view of (\ref{lr1.2}) and (\ref{bound1.1}), we observe that the second inequality of (\ref{lr1.1}) holds for $r=1.$ Now, repeating the interpolation between $1$ and $p+1$, we see that the second inequality of (\ref{lr1.1}) holds for all r $\geq 1.$ \\
	
	\noindent \textbf{Case-IV.} Let $0<r \leq 1.$
	Taking $F=u_{d}^{r},\, G=1, \, p=\frac{1}{r}, \, q=\frac{1}{1-r}$ and using the H\"{o}lders inequality, we get \
	\begin{align*}
		\int_{\Omega}u_{d}^{r}dx \leq \norm{F}_{p}\norm{G}_{q}=\abs{\Omega}^{1-r} \left(\int_{\Omega}u_{d}dx \right)^{r} \leq \abs{\Omega}^{1-r} B(1)^{r}d^{\frac{nr}{2s}}.
	\end{align*}
	This proves the second inequality of (\ref{lr1.4}).\\
	Now, let us prove the first inequality of (\ref{lr1.1}) and (\ref{lr1.4}).
	In view of equations (\ref{bound1.18}) and (\ref{bound2.1}), we see that
	\begin{align}\label{lr2.3}
		\int_{\Omega}u_{d}^{p+1} \geq K_{0}d^{\frac{n}{2s}}.
	\end{align}
	Since $$\displaystyle \sup_{\Omega}u_{d}(x) \leq C_{1}, \text{ for some constant } C_{1}>0,$$ we have
	\begin{align*}
		K_{0}d^{\frac{n}{2s}}\leq \int_{\Omega}u_{d}^{p+1} &=  \, \int_{\Omega}\left(u_{d}^{p+1-r}\right)\left(u_{d}^{r}\right)dx \\
		&\leq C_{1}^{p+1-r}\int_{\Omega}u_{d}^{r}dx.
	\end{align*}
	This implies that $$\int_{\Omega}u_{d}^{r}dx \geq K_{0}C_{1}^{r-p-1}d^{\frac{n}{2s}}, ~r<p+1.$$ 
	For $r>p+1$, we write $p+1=1+(1-t)r.$
	Therefore, we get 
	\begin{align*}
		K_{0}d^{\frac{n}{2s}}\leq & \, \int_{\Omega}u_{d}^{p+1}dx \\
		=& \, \int_{\Omega}u_{d}^{1+(1-t)r}dx \\
		\leq & \, \Bigl(u_{d}dx \Bigr)^{t}\Bigl(u_{d}^{r}dx \Bigr)^{1-t} \\
		\leq & \, \Bigl( B(1)d^{\frac{n}{2s}} \Bigr)^{t} \Bigl( u_{d}^{r}dx \Bigr)^{1-t}.
	\end{align*}
	This yields that $$\int_{\Omega}u_{d}^{r}dx \geq (K_{0}B(1)^{-t})^{\frac{1}{1-t}}d^{\frac{n}{2s}}.$$ \qed

\section{{Proof of theorem \ref{dist}}}
We prove Theorem \ref{dist} in this section. Its proof is more involved and requires some scaling and compactness arguments. We prove the statements of theorem one by one. Let $z_{d} \in \overline{\Omega} $ be a point of maximum of $u_{d}.$ The basic idea for its proof is simple. We approximate $u_{d}$ around $z_{d}$ by a scaled positive radial solution of (\ref{P2}). It gives us an upper bound on $c_{d},$ which is closely related to the location of point $z_{d}.$ \\

\noindent \textbf{Proof of (A).} If the inequality in (A) is not true, then there is a decreasing sequence $d_{j}\downarrow 0$ such that
\begin{align}
\rho_{j}:=\frac{\rho(z_{j}, \partial \Omega)}{d_{j}^{\frac{1}{2s}}} \rightarrow +\infty \text{ as $j \rightarrow \infty$},
\end{align}
where $z_{j}:=z_{d_{j}}$ is a point of maximum of $u_{d_{j}}$ on $\overline{\Omega}.$ Define $$\phi_{j}(y):=u_{d_{j}}(yd_{j}^{\frac{1}{2s}}+z_{j}) \,\,\, \text{for } y \in \mathbb{R}^{n}.$$

 Since $u_{d}$ is a classical solution of (\ref{P1}), we have
\begin{align}
	(-\Delta)^{s}\phi_{j}+\phi_{j}=\phi_{j}^{p} \,\,\, \text{in } B_{\rho_{j}},
\end{align}
and  \begin{enumerate}
\item $ \phi_{j} \in C^{0,2s+\epsilon}(B_{\rho_{j}}),$ when $0<s<\frac{1}{2},$ $ 2s+\epsilon<1 $
\item  $ \phi_{j} \in C^{1,2s+\epsilon-1}(B_{\rho_{j}}),$ when $\frac{1}{2} \leq s <1,$ $2s+\epsilon-1<1.$
\end{enumerate}
First, we claim that the sequence $\bigl\{\phi_{j}\bigr\}$ contains a convergent subsequence.
Let $\bigl\{R_{k}\bigr\}$ be a monotone increasing sequence of positive numbers with $R_{k} \rightarrow +\infty$ as $k \rightarrow \infty.$ Therefore, we have for each $k,$ there is a number $j_{k}$ such that $4R_{k}< \rho_{j}$ whenever $j \geq j_{k}.$ Since $u_{d} \in L^{\infty}(\mathbb{R}^n) \cap \mathcal{L}_{s}(\mathbb{R}^n),$ we have $ \phi_{{j}} \in L^{\infty}(\mathbb{R}^n) \cap \mathcal{L}_{s}(\mathbb{R}^n)$ for each $j \geq 1.$ Now, we can use Theorem 1.4 \cite{Fall1} to get the following estimates:\\
For $0<s<\frac{1}{2},$ $ 2s+\epsilon<1 $
\begin{enumerate}
\item[i)] $4s+\epsilon<1,$ then $$ \norm{\phi_{{j}}}_{C^{0,4s+\epsilon} (B_{2R_{k}})} \leq C \Bigl(\norm{\phi_j}_{L^{\infty} (\mathbb{R}^{n})}  + \norm{\phi_{j}^{p}-\phi_j}_{C^{0,2s+\epsilon}(B_{4R_{k}})} \Bigr) $$
\item[ii)]   $1<4s+\epsilon<2,$ then $$ \norm{\phi_{{j}}}_{C^{1,4s+\epsilon-1} (B_{2R_{k}})} \leq C \Bigl(\norm{\phi_j}_{L^{\infty} (\mathbb{R}^{n})}  + \norm{\phi_{j}^{p}-\phi_j}_{C^{0,2s+\epsilon}(B_{4R_{k}})} \Bigr), $$
\end{enumerate}
and for $\frac{1}{2} \leq s <1,$ $2s+\epsilon-1<1$
\begin{enumerate}
	\item[iii)] $4s+\epsilon-1<1,$ then $$ \norm{\phi_{{j}}}_{C^{1,4s+\epsilon-1} (B_{2R_{k}}} \leq C \Bigl(\norm{\phi_j}_{L^{\infty} (\mathbb{R}^{n})}  + \norm{\phi_{j}^{p}-\phi_j}_{C^{1,2s+\epsilon-1}(B_{4R_{k}})} \Bigr) $$
	\item[iv)]   $1<4s+\epsilon-1<2,$ then $$ \norm{\phi_{{j}}}_{C^{2,4s+\epsilon-1} (B_{2R_{k}})} \leq C \Bigl(\norm{\phi_j}_{L^{\infty} (\mathbb{R}^{n})}  + \norm{\phi_{j}^{p}-\phi_j}_{C^{1,2s+\epsilon-1}(B_{4R_{k}})} \Bigr), $$
\end{enumerate}
where the constant $C>0$ is independent of $j.$

Let us recall the inequality (\ref{bound1.1}) here:
\begin{align*}
	d	\frac{c_{n,s}}{2} \int_{T(\Omega)} \frac{\abs{u_{d}(x)-u_{d}(y)}^{2}}{\abs{x-y}^{n+2s}}dxdy + 
	\int_{\Omega}u^{2}dx=\int_{\Omega} u_{d}^{p+1} \leq C_{0}d^{\frac{n}{2s}},
\end{align*}
where $C_{0}$ is independent of $d.$ This yields
\begin{align}
	\int_{B_{\rho_{j}}} \phi_{j}^{p+1} \leq C_{0},
\end{align}
and
\begin{align}\label{dist1.35}
	\norm{\phi_{j}}_{H^{s}(B_{\rho_{j}})} \leq C_{0}, \text{ for all } j\geq 1.
\end{align}
Also, by Theorem \ref{lr-estimate} we have $$\int_{\Omega} u_{d}^{r} \leq B(r)d^{\frac{n}{2s}} \,\,\, \text{ for all } r \geq 1$$ which implies that
\begin{align}\label{lr-estimate1}
	\int_{B_{\rho_{j}}} \phi_{j}^r \leq B(r),\,\,\,\text{ for all } j\geq 1 \text{ and } r \geq 1.
\end{align}
 By Lemma \ref{reg} and  Theorem \ref{bound1}, we have 
\begin{align}\label{dist1.2}
\norm{u_{d}}_{L^{\infty}(\mathbb{R}^{n})} \leq C_{1},
\end{align}
where the constant $C_{1}$ is independent of the diffusion constant $d.$ So the equations (\ref{lr-estimate1}), (\ref{dist1.2}) and Theorem 1.3 \cite{Fall1} imply that $$ \norm{\phi_{j}}_{X_{s}(\overline{B}_{R_{k}})} < C_2 \,\,\, \text{ for all } j \geq j_{k},$$ where the constant $C_2>0$ is independent of $j$ and the space $X_{s}(\overline{B}_{R_{k}})$ is identified with one of the spaces $C^{0,4s+\epsilon}(\overline{B}_{R_{k}}),$ $C^{1,4s+\epsilon-1}(\overline{B}_{R_{k}})$ or $C^{2,4s+\epsilon-1}(\overline{B}_{R_{k}})$ with same assumptions on $s$ and $\epsilon$ as above. Therefore $\bigl\{\phi_{j}\bigr\}$ is a relatively compact set in $X_{s}(\overline{B}_{R_{k}}),$ hence by the standard diagonal process, one can extract a convergent subsequence of $\bigl\{\phi_{j}\bigr\},$ we continue to denote such a subsequence by  $\bigl\{\phi_{j}\bigr\}$ itself such that  \begin{align*}
\phi_{j} \rightarrow v \,\,\, \text{in } C^{0, 2s+\epsilon}_{loc}(\mathbb{R}^{n}) \text{ when } 0<s<\frac{1}{2}, 2s+\epsilon<1 \end{align*} or

 \begin{align*} \phi_{j} \rightarrow v \,\,\, \text{in } C^{1, 2s+\epsilon-1}_{loc}(\mathbb{R}^{n}) \text{ when } \frac{1}{2}<s<1, 2s+\epsilon-1<1
\end{align*}
for some $v.$ The limit $v \in C^{0,2s+\epsilon}(\mathbb{R}^{n}) \cap H^{s}(\mathbb{R}^{n})$ when $0<s<\frac{1}{2}, 2s+\epsilon<1$ or $v \in C^{1,2s+\epsilon-1}(\mathbb{R}^{n}) \cap H^{s}(\mathbb{R}^{n})$ when $\frac{1}{2}<s<1, 2s+\epsilon-1<1$  follows from (\ref{dist1.35}). Consequently,
we have
$$\lim_{\abs{x}\rightarrow \infty}v(x)=0.$$ Using Theorem 1.1 \cite{Du}, we have $(-\Delta)^{s}\phi_{j}(x) $ converges to $ (-\Delta)^{s}v(x)$ point-wise in $\mathbb{R}^{n}.$ 
Consequently, we see that the limit $v$ satisfies the equation
\begin{align}
(-\Delta)^{s}v+v=v^{p} \,\,\, \text{in } \mathbb{R}^{n}.
\end{align}
Clearly, $v \geq 0$ because each $\phi_{j} \geq 0.$ Since by Lemma \ref{sup1}, we have $\phi_{j}(0)=u_{d_{j}}(z_{j})>1$ for each $j \geq 1,$ one can see that $v \not \equiv 0.$ 


Using Theorem \ref{radial1}, one can see that $v$ is radially symmetric and decreasing about some point in $\mathbb{R}^{n}.$ Since $$\nabla v(0)= \lim_{j \rightarrow \infty} \nabla \phi_{j}(0)=0$$ so it implies that $v$ is radially symmetric about the origin. And by Theorem \ref{radial2}, $v$ has a power type of decay at infinity, i.e.,
\begin{align*}
v(r)\leq \frac{C_{2}}{r^{n+2s}}, \,\,\, r \geq 1.
\end{align*}
Now, we derive a lower bound on the critical value $c_{d_{j}}.$ Let us define \begin{align}\label{dist1.10}
\delta_{R}:= \frac{C_{2}}{R^{n+2s}}, 
\end{align}
where $R>0$ arbitrarily large real number. Then, there exists a positive integer $j_{R}$ such that if $j \geq j_{R}$ then $\rho_{j}\geq 2R$ and \begin{align}\label{dist1.6}
\norm{\phi_{j}-v}_{C^{2}(\overline{B}_{2R})} \leq \delta_{R}.
\end{align}
By Lemma \ref{critical2}, we have $$c_{d_{j}}= M[u_{d_{j}}]=J_{d_{j}}(u_{d_{j}}).$$ Using this fact and (\ref{bound1.19}), we obtain
 \begin{align}\label{dist1.26}
 c_{d_{j}}=& \, \left(\frac{1}{2}-\frac{1}{p+1}\right)\int_{\Omega}u_{d_{j}}^{p+1}dx  \\
\geq & \, \left(\frac{1}{2}-\frac{1}{p+1}\right) \int_{\abs{x-z_{j}}<d_{j}^{\frac{1}{2s}}R}u_{d_{j}}^{p+1}dx \nonumber \\
=& \, d_{j}^{\frac{n}{2s}} \left(\frac{1}{2}-\frac{1}{p+1}\right) \int_{\abs{y} < R} \phi_{j}^{p+1}dy. \nonumber 
\end{align}
Now, we write
\begin{align} \label{dist1.7}
c_{d_{j}}=& \, d_{j}^{\frac{n}{2s}} \left(\left(\frac{1}{2}-\frac{1}{p+1}\right)\int_{B_{R}} v^{p+1}dy+ F_{j} \right), 
\end{align}
where $$F_{j}:=  \left(\frac{1}{2}-\frac{1}{p+1}\right) \int_{B_{R}}\left( \phi_{{j}}^{p+1}- v^{p+1} \right)dy. $$
By Eq. (\ref{dist1.6}), we have for all $y \in B_{R},~ j \geq j_{R}$ 
\begin{align}
\abs{ \phi_{{j}}^{p+1}- v^{p+1}} \leq C \abs{\phi_{{j}}-v} \leq \delta_{R},
\end{align}
where $C>0$ is some constant. This implies that 
\begin{align*}
\abs{F_{j}} \leq \left(\frac{1}{2}-\frac{1}{p+1}\right)C \abs{B_{R}}  \delta_{R}=C_{3}R^{n}\delta_{R},
\end{align*}
where $$C_{3}= \left(\frac{1}{2}-\frac{1}{p+1}\right)\frac{w_{n}}{n}
C$$ and $w_{n}$ denotes the surface area of the unit sphere in $\mathbb{R}^{n}.$ Consequently, Inequality (\ref{dist1.7}) becomes
\begin{align}\label{dist1.8}
c_{d_{j}}\geq d_{j}^{\frac{n}{2s}} \left[\left(\frac{1}{2}-\frac{1}{p+1}\right)\int_{B_{R}} v^{p+1}dy-C_{3}R^{n}\delta_{R} \right].
\end{align}
Now, it is easy to see that
\begin{align}
\left(\frac{1}{2}-\frac{1}{p+1}\right)\int_{B_{R}} v^{p+1}dy= F(v)-\left(\frac{1}{2}-\frac{1}{p+1}\right)\int_{\abs{y}>R} v^{p+1}dy,
\end{align}
where $F(v)$ is defined earlier in (\ref{energy2}). Simplifying the second term on right hand side, we get 
\begin{align*}
\left(\frac{1}{2}-\frac{1}{p+1}\right)\int_{\abs{y}>R} v^{p+1}dy=& \, \left(\frac{1}{2}-\frac{1}{p+1}\right) \int_{R}^{\infty} \frac{r^{n-1}w_{n}}{r^{(n+2s)(p+1)}}dr \\
=& \, \left(\frac{1}{2}-\frac{1}{p+1}\right) \frac{w_{n}}{(n+2s)p+2s}\frac{1}{R^{(n+2s)p+2s}}=\frac{C_{4}}{R^{(n+2s)p+2s}}.
\end{align*}
Therefore, one can write

\begin{align}\label{dist1.9}
\left(\frac{1}{2}-\frac{1}{p+1}\right)\int_{B_{R}} v^{p+1}dy = F(v)-\frac{C_{4}}{R^{(n+2s)p+2s}}.
\end{align}
 On combining Equations (\ref{dist1.10}), (\ref{dist1.8}) and (\ref{dist1.9}), we get for $j \geq j_{R},$
\begin{align}\label{dist1.11}
c_{d_{j}} \geq & \, d_{j}^{\frac{n}{2s}}\left( F(v)- \frac{C_{4}}{R^{(n+2s)p+2s}}-\frac{C_{2}C_{3}}{R^{2s}} \right)  \nonumber \\
 \geq& \,  d_{j}^{\frac{n}{2s}}\left( F(v)- \frac{C_{5}}{R^{2s}} \right) ,
\end{align}
where $C_{5}$ is independent of $j$ and $R.$\\
Now, we derive an upper bound on the critical value $c_{d_{j}}.$ Without loss of generality, we may assume that the domain $\Omega$ is a subset of $\mathbb{R}^{n}_{+}$ and $0 \in \partial \Omega.$ Given Definition \ref{ground}, let $w$ be a ground state solution of (\ref{P2}). Define
\begin{align*}
\Omega_{d}:=\Bigl\{\frac{x}{d^{\frac{1}{2s}}} \mid x\in \Omega \Bigr\},\\
w_{d}(x):=w\left( \frac{x}{d^{\frac{1}{2s}}}\right), \text{ for } x\in \mathbb{R}^{n}.
\end{align*}
Since $w \geq 0$ so this implies that $w_{d} \geq 0$. Define 
$$g_{d}(t):=J_{d}(tw_{d}),\,\,\,\,t \geq 0.$$ Then by Lemma \ref{critical2}, there exists unique $t_{0}=t_{0}(d) >0$ at which $g_{d}$ attains maximum. Note that $t_{0}(d) \rightarrow 1$ as $d \downarrow 0.$ Hence, we have 
\begin{align*}
M[w_{d}]=& \, J_{d}\left(t_{0}w_{d}\right) \\
=& \, \frac{t_{0}^{2}}{2} \left[\frac{dc_{n,s}}{2}\int_{T(\Omega)}\frac{\abs{w_{d}(x)-w_{d}(y)}^{2}}{\abs{x-y}^{n+2s}}dxdy+ \int_{\Omega}w_{d}^{2}dx\right]-\frac{t_{0}^{p+1}}{p+1}\int_{\Omega}w_{d}^{p+1}dx \\
=& \, \frac{t_{0}^{2}}{2} \left[\frac{dc_{n,s}}{2}\int_{T(\Omega)}\frac{\abs{w\left(\frac{x}{d^{\frac{1}{2s}}}\right)-w\left(\frac{y}{d^{\frac{1}{2s}}}\right)}^{2}}{\abs{x-y}^{n+2s}}dxdy+ \int_{\Omega}w^{2}\left(\frac{x}{d^{\frac{1}{2s}}}\right)dx\right]-\frac{t_{0}^{p+1}}{p+1}\int_{\Omega}w^{p+1}\left(\frac{x}{d^{\frac{1}{2s}}}\right)dx. 
\end{align*}
The change of variables $$\frac{x}{d^{\frac{1}{2s}}}=a,\, \frac{y}{d^{\frac{1}{2s}}}=b,$$ gives us 
\begin{align*}
M[w_{d}]=& \, d^{\frac{n}{2s}} \left(\frac{t_{0}^{2}}{2} \left[\frac{c_{n,s}}{2}\int_{T(\Omega_{d})}\frac{\abs{w(a)-w(b)}^{2}}{\abs{a-b}^{n+2s}}dadb+ \int_{\Omega_{d}}w^{2}da\right]-\frac{t_{0}^{p+1}}{p+1}\int_{\Omega_{d}}w^{p+1}da \right) \\
=&\, d^{\frac{n}{2s}} \left( \frac{t_{0}^{2}}{2} \left[\frac{c_{n,s}}{2}\int_{\Omega_{d}} \int_{\Omega_{d}} \frac{\abs{w(a)-w(b)}^{2}}{\abs{a-b}^{n+2s}}dadb+ 2c_{n,s}\int_{\mathcal{C}\Omega_{d}} \int_{\Omega_{d}} \frac{\abs{w(a)-w(b)}^{2}}{\abs{a-b}^{n+2s}}dadb + \int_{\Omega_{d}}w^{2}da\right]-\frac{t_{0}^{p+1}}{p+1}\int_{\Omega_{d}}w^{p+1}da \right).
\end{align*}
Let us denote by $I_{d}:$ 
$$I_{d}=\frac{t_{0}^{2}}{2} \left[\frac{c_{n,s}}{2}\int_{\Omega_{d}} \int_{\Omega_{d}} \frac{\abs{w(a)-w(b)}^{2}}{\abs{a-b}^{n+2s}}dadb+ 2c_{n,s}\int_{\mathcal{C}\Omega_{d}} \int_{\Omega_{d}} \frac{\abs{w(a)-w(b)}^{2}}{\abs{a-b}^{n+2s}}dadb + \int_{\Omega_{d}}w^{2}da\right]-\frac{t_{0}^{p+1}}{p+1}\int_{\Omega_{d}}w^{p+1}da.$$
Since $$t_{0}(d) \rightarrow 1 \text{ as }d\downarrow 0,$$ we get
\begin{align*}
I_{d}= \frac{1}{2} \left[\frac{c_{n,s}}{2}\int_{\mathbb{R}^{n}_{+}} \int_{\mathbb{R}^{n}_{+}} \frac{\abs{w(a)-w(b)}^{2}}{\abs{a-b}^{n+2s}}dadb+ 2c_{n,s}\int_{\mathcal{C} \mathbb{R}^{n}_{+}} \int_{\mathbb{R}^{n}_{+}} \frac{\abs{w(a)-w(b)}^{2}}{\abs{a-b}^{n+2s}}dadb + \int_{\mathbb{R}^{n}_{+}}w^{2}da\right]-\frac{1}{p+1}\int_{\mathbb{R}^{n}_{+}}w^{p+1}da + o(1)
\end{align*}
as $d \downarrow 0.$
Further, $w$ is a nonnegative and radially symmetric implies that 
$$\int_{\mathbb{R}^{n}_{+}}w^{2}da=\frac{1}{2}\int_{\mathbb{R}^{n}}w^{2}da,\,\, \int_{\mathbb{R}^{n}_{+}}w^{p+1}da=\frac{1}{2}\int_{\mathbb{R}^{n}}w^{p+1}da,$$
$$\int_{\mathbb{R}^{n}_{+}} \int_{\mathbb{R}^{n}_{+}} \frac{\abs{w(a)-w(b)}^{2}}{\abs{a-b}^{n+2s}}dadb = \frac{1}{4} \int_{\mathbb{R}^{n}} \int_{\mathbb{R}^{n}} \frac{\abs{w(a)-w(b)}^{2}}{\abs{a-b}^{n+2s}}dadb,$$
$$ \int_{\mathcal{C}\mathbb{R}^{n}_{+}} \int_{\mathbb{R}^{n}_{+}} \frac{\abs{w(a)-w(b)}^{2}}{\abs{a-b}^{n+2s}}dadb = \frac{1}{4} \int_{\mathbb{R}^{n}} \int_{\mathbb{R}^{n}} \frac{\abs{w(a)-w(b)}^{2}}{\abs{a-b}^{n+2s}}dadb.$$
Using these estimates, we get
$$ I_{d} < \frac{1}{2}\left( \frac{1}{2} \left[ \frac{c_{n,s}}{2}\int_{\mathbb{R}^{n}} \int_{\mathbb{R}^{n}} \frac{\abs{w(a)-w(b)}^{2}}{\abs{a-b}^{n+2s}}dadb + \int_{\mathbb{R}^{n}}w^{2}da \right]- \frac{1}{p+1} \int_{\mathbb{R}^{n}}w^{p+1}da\right)+ o(1)= \frac{1}{2}F(w)+o(1),$$
as $d \downarrow 0.$
Thus, we have 
\begin{align}\label{dist1.12}
M[w_{d}]=d^{\frac{n}{2s}}I_{d}< \frac{d^{\frac{n}{2s}}}{2}F(w)+o(1),
\end{align}
as $d\downarrow 0.$
Using (c) of Theorem \ref{radial3}, we have $0<F(w) \leq F(v)$ for any nonnegative nonzero classical solution $v$ of (\ref{P2}) and by Lemma \ref{critical2}, we have
 $$c_{d_{j}} \leq M[w_{d_{j}}]< \frac{d_{j}^{\frac{n}{2s}}}{2}F(v)$$ for $d_{j}$ sufficiently small. By taking $R$ sufficiently large in (\ref{dist1.11}), we thus obtain a contradiction. This proves (A). 

\begin{rem}
In the classical case \cite{Ni1}, authors have defined diffeomorphisms which straightens a boundary portion near $Q \in \partial \Omega.$ Further, using scaling and translations of least energy solutions $u_{d}$ of \eqref{P3}, the classical problem (\ref{P3}) gets transferred into new elliptic equation. Due to the nonlocal nature of the fractional Laplacian and boundary condition in our problem, it is almost impossible to introduce such scaling and translation arguments.
\end{rem}

\noindent \textbf{Proof of (B).} Now, we claim that $z_{d} \in \partial \Omega.$ Suppose that there is a decreasing sequence $d_{j} \downarrow 0$ such that $z_{d_{j}}:=z_{j} \in \Omega.$ We have from Lemma \ref{dist} that the sequence $\left\{z_{j}\right\}$ converges to some $z \in \partial \Omega.$ Without loss of generality, let us assume that $z=0.$ Define
\[ \widehat{u}_{j}(x):= \begin{cases}
u_{d_{j} }(x)& \, \text{in $\mathbb{R}^{n}_{+}$}, \\
u_{d_{j} }(x', -x_{n}) & \, \text{in $\mathbb{R}^{n}_{-}$},
\end{cases} \] 
where $$x'=(x_{1},x_{2},\dots, x_{n-1}),\,\, \mathbb{R}^{n}_{+}=\bigl\{(x',x_{n}) \mid x_{n} \geq 0 \bigr\},\,\,\mathbb{R}^{n}_{-}=\bigl\{(x',x_{n}) \mid x_{n} \leq 0 \bigr\}.$$  Also, define a scaled function
\begin{align} \label{alpha}
\psi_{j}(y):=\widehat{u}_{j}\left(yd_{j}^{\frac{1}{2s}}+z_{j}\right)\,\,\,\text{for } y \in \mathbb{R}^{n}.
\end{align}
Note that for $z_{j}=(z_{j}',z_{jn}),$ we can write $z_{jn}=\alpha_{j}d_{j}^{\frac{1}{2s}}$ for some $\alpha_{j}>0.$ The sequence $\left\{\alpha_{j}\right \} $ is bounded, which follows from  Lemma \ref{dist}.
Let \begin{align}
\rho_{j}:=\frac{\rho(z_{j}, \partial \Omega)}{d_{j}^{\frac{1}{2s}}} ,
\end{align}
where $\rho (z_{j}, \partial \Omega)$ denotes the distance between $z_{j}$ and $ \partial \Omega.$
Note that the function $\psi_{j}$ satisfies the equation
\begin{align}\label{dist1.21}
(-\Delta)^{s}\psi_{j}(y)+ \psi_{j}(y)= \psi_{j}(y)^{p}+ d_{j}{h}(y) \,\,\, \text{in } B_{\rho_{j}},
\end{align}
for some function ${h}$ of $y.$  
To see this, let $y \in B_{\rho_{j}}$, we have
\begin{align}\label{dist1.13}
	(-\Delta)^{s}\psi_{j}(y)=&\,c_{n,s} P.V. \int_{\mathbb{R}^{n}}\frac{\psi_{j}(y)-\psi_{j}(x)}{\abs{y-x}^{n+2s}}dx= c_{n,s} \lim_{\epsilon \rightarrow 0} \int_{\mathcal{C} B_{\epsilon}(y)}\frac{\psi_{j}(y)-\psi_{j}(x)}{\abs{y-x}^{n+2s}}dx  \nonumber \\
	=&\, c_{n,s} \lim_{\epsilon \rightarrow 0} \left[  \int_{\mathcal{C} B_{\epsilon}(y)}\frac{\widehat{u}_{j}(yd_{j}^{\frac{1}{2s}}+z_{j})-\widehat{u}_{j}(xd_{j}^{\frac{1}{2s}}+z_{j})}{\abs{y-x}^{n+2s}}dx  \right] \nonumber \\
	=&\, c_{n,s} \lim_{\epsilon \rightarrow 0} \Biggl[  \int_{\bigl\{x_{n} \geq -\alpha_{j}\bigr\}\bigcap \mathcal{C} B_{\epsilon}(y)}\frac{\widehat{u}_{j}(yd_{j}^{\frac{1}{2s}}+z_{j})-\widehat{u}_{j}(xd_{j}^{\frac{1}{2s}}+z_{j})}{\abs{y-x}^{n+2s}}dx \nonumber \\    & \, \hspace{0.3cm}+\int_{\bigl\{x_{n} \leq-\alpha_{j}\bigr\}\bigcap \mathcal{C}B_{\epsilon}(y)}\frac{\widehat{u}_{j}(yd_{j}^{\frac{1}{2s}}+z_{j})-\widehat{u}_{j}(xd_{j}^{\frac{1}{2s}}+z_{j})}{\abs{y-x}^{n+2s}}dx \Biggr].
\end{align}
For $y_{n} \geq -\alpha_{j},$ we have 
\begin{align}\label{dist1.14}
	(-\Delta)^{s}\psi_{j}(y)= &\, c_{n,s}\lim_{\epsilon \rightarrow 0} \Biggl[  \int_{\bigl\{x_{n} \geq -\alpha_{j}\bigr\}\bigcap \mathcal{C} B_{\epsilon}(y)}\frac{{u}_{{j}}(yd_{j}^{\frac{1}{2s}}+z_{j})-{u}_{{j}}(xd_{j}^{\frac{1}{2s}}+z_{j})}{\abs{y-x}^{n+2s}}dx  \nonumber \\ 
	& \,+ \,\,\, \int_{\bigl\{x_{n} \leq-\alpha_{j}\bigr\}\bigcap \mathcal{C} B_{\epsilon}(y)}\frac{{u}_{{j}}(yd_{j}^{\frac{1}{2s}}+z_{j})-{u}_{{j}}(x'd_{j}^{\frac{1}{2s}}+z_{j}', -(x_{n}+\alpha_{j}d_{j}^{\frac{1}{2s}}))}{\abs{y-x}^{n+2s}}dx \Biggr] \nonumber \\ 
	=&\, c_{n,s} \lim_{\epsilon \rightarrow 0} \Biggl[  \int_{\bigl\{x_{n} \geq -\alpha_{j}\bigr\} \bigcap \mathcal{C} B_{\epsilon}(y)}\frac{{u}_{{j}}(yd_{j}^{\frac{1}{2s}}+z_{j})-{u}_{{j}}(xd_{j}^{\frac{1}{2s}}+z_{j})}{\abs{y-x}^{n+2s}}dx  \nonumber \\ 
	&\, \hspace{0.3cm} + \,\,\, \int_{\bigl\{x_{n} \leq-\alpha_{j}\bigr\}\bigcap \mathcal{C} B_{\epsilon}(y)}\frac{{u}_{{j}}(yd_{j}^{\frac{1}{2s}}+z_{j})-{u}_{{j}}(xd_{j}^{\frac{1}{2s}}+z_{j})+ {u}_{{j}}(xd_{j}^{\frac{1}{2s}}+z_{j})   -{u}_{{j}}(x'd_{j}^{\frac{1}{2s}}+z_{j}', -(x_{n}+\alpha_{j}d_{j}^{\frac{1}{2s}}))}{\abs{y-x}^{n+2s}}dx \Biggr] \nonumber \\ 
	=&\, c_{n,s} \lim_{\epsilon \rightarrow 0} \Biggl[ \int_{\mathcal{C} B_{\epsilon}(y)}\frac{{u}_{{j}}(yd_{j}^{\frac{1}{2s}}+z_{j})-{u}_{{j}}(xd_{j}^{\frac{1}{2s}}+z_{j})}{\abs{y-x}^{n+2s}}dx  \nonumber \\
	&\,\hspace{0.3cm}+  \int_{\bigl\{x_{n} \leq-\alpha_{j}\bigr\} \bigcap \mathcal{C}B_{\epsilon}(y)}\frac{{u}_{{j}}(xd_{j}^{\frac{1}{2s}}+z_{j})   -{u}_{{j}}(x'd_{j}^{\frac{1}{2s}}+z_{j}', -(x_{n}+\alpha_{j}d_{j}^{\frac{1}{2s}}))}{\abs{y-x}^{n+2s}}dx \Biggr] \nonumber \\
	=&\, c_{n,s}  \lim_{\epsilon \rightarrow 0} \left[ \int_{\mathcal{C} B_{\epsilon}(y)}\frac{{u}_{{j}}(yd_{j}^{\frac{1}{2s}}+z_{j})-{u}_{{j}}(xd_{j}^{\frac{1}{2s}}+z_{j})}{\abs{y-x}^{n+2s}}dx +  \int_{\bigl\{x_{n} \leq-\alpha_{j}\bigr\}\bigcap \mathcal{C} B_{\epsilon}(y)}\frac{{u}_{{j}}(xd_{j}^{\frac{1}{2s}}+z_{j})   -\widehat{u}_{{j}}(xd_{j}^{\frac{1}{2s}}+z_{j})}{\abs{y-x}^{n+2s}}dx \right].
\end{align}
Making change of variables $$yd_{j}^{\frac{1}{2s}}+z_{j}=a \text{ and } xd_{j}^{\frac{1}{2s}}+z_{j}=b,$$ we get 
\begin{align}\label{dist1.15}
	(-\Delta)^{s}\psi_{j}(y)=&\, d_{j}(-\Delta)^{s}u_{j}(a)+ d_{j} c_{n,s} \lim_{\eta \rightarrow 0} \int_{\bigl\{b_{n} \leq 0\bigr\} \bigcap \mathcal{C} B_{\eta }(a)}\frac{{u}_{{j}}(b)   -\widehat{u}_{{j}}(b)}{\abs{a-b}^{n+2s}}db \nonumber\\
	=&\, d_{j}(-\Delta)^{s}u_{j}(a)+ d_{j}h(a),
\end{align}
where $$\eta = \epsilon d_{j}^{\frac{1}{2s}} $$ and $$h(a)= c_{n,s} \lim_{\eta \rightarrow 0} \int_{\bigl\{b_{n} \leq 0\bigr\} \bigcap \mathcal{C} B_{\eta }(a)}\frac{{u}_{{j}}(b)   -\widehat{u}_{{j}}(b)}{\abs{a-b}^{n+2s}}db.$$ Note that $a \in \Omega.$\\
Now, consider the case $y_{n} \leq -\alpha_{j}.$ Equation (\ref{dist1.13}) becomes
\begin{align}\label{dist1.16}
	(-\Delta)^{s}\psi_{j}(y) = &\, c_{n,s} \lim_{\epsilon \rightarrow 0}\Biggl[  \int_{\bigl\{x_{n} \geq -\alpha_{j}\bigr\}\bigcap \mathcal{C} B_{\epsilon}(y)}\frac{{u}_{j}(y'd_{j}^{\frac{1}{2s}}+z_{j}', -(y_{n}d_{j}^{\frac{1}{2s}}+\alpha_{j}d_{j}^{\frac{1}{2s}}))-{u}_{j}(xd_{j}^{\frac{1}{2s}}+z_{j})}{\abs{y-x}^{n+2s}}dx \nonumber \\
	&\,+  \int_{\bigl\{x_{n} \leq-\alpha_{j}\bigr\}\bigcap \mathcal{C} B_{\epsilon}(y)}\frac{{u}_{j}(y'd_{j}^{\frac{1}{2s}}+z_{j}', -(y_{n}d_{j}^{\frac{1}{2s}}+\alpha_{j}d_{j}^{\frac{1}{2s}}))- {u}_{j}(x'd_{j}^{\frac{1}{2s}}+z_{j}', -(x_{n}d_{j}^{\frac{1}{2s}}+\alpha_{j}d_{j}^{\frac{1}{2s}}))}{\abs{y-x}^{n+2s}}dx  \Biggr] \nonumber \\
	=&\, I_{1}+ I_{2}, 
\end{align}
where $I_{1}$ and $I_{2}$ denote the first and second integral on the right hand side, respectively.
Let us introduce some of notations. We write $\widehat{x}= (x',-x_{n})$, $\widetilde{x}=(\widehat{x}',\widehat{x}_{n})$ and $\widehat{x}_{n}=-x_{n}$ for $x=(x',x_{n}) \in \mathbb{R}^{n}, n>1.$ Using these, let us compute
\begin{align}\label{dist1.18}
	I_{2}=&\, c_{n,s} \lim_{\epsilon \rightarrow 0} \Biggl[\int_{\bigl\{\widehat{x}_{n} \geq \alpha_{j}\bigr\} \bigcap \mathcal{C} B_{\epsilon}(\widehat{y})}\frac{{u}_{j}(\widehat{y}'d_{j}^{\frac{1}{2s}}+\hat{z}_{j}', \widehat{y}_{n}d_{j}^{\frac{1}{2s}}+\widehat{\alpha}_{j}d_{j}^{\frac{1}{2s}})- {u}_{j}(\widehat{x}'d_{j}^{\frac{1}{2s}}+\widehat{z}_{j}', \widehat{x}_{n}d_{j}^{\frac{1}{2s}}+\widehat{\alpha}_{j}d_{j}^{\frac{1}{2s}})}{\abs{\widehat{y}-\widehat{x}}^{n+2s}}d\widetilde{x}  \Biggr] \nonumber \\
	=&\, c_{n,s} \lim_{\epsilon \rightarrow 0} \Biggl[\int_{\bigl\{\widehat{x}_{n} \geq \alpha_{j}\bigr\} \bigcap \mathcal{C} B_{\epsilon}(\widehat{y})}\frac{{u}_{j}(\widetilde{y}d_{j}^{\frac{1}{2s}}+\widetilde{z}_{j})- {u}_{j}(\widetilde{x}d_{j}^{\frac{1}{2s}}+\widetilde{z}_{j} )}{\abs{\widehat{y}-\widehat{x}}^{n+2s}}d\widetilde{x}  \Biggr] \nonumber\\ 
	=&\, c_{n,s} \lim_{\epsilon \rightarrow 0} \Biggl[\int_{\bigl\{\widehat{x}_{n} \geq \alpha_{j}\bigr\} \bigcap \mathcal{C} B_{\epsilon}(\widetilde{y})}\frac{{u}_{j}(\widetilde{y}d_{j}^{\frac{1}{2s}}+\widetilde{z}_{j})- {u}_{j}(\widetilde{x}d_{j}^{\frac{1}{2s}}+\widetilde{z}_{j} )}{\abs{\widetilde{y}-\widetilde{x}}^{n+2s}}d\widetilde{x}  \Biggr].
\end{align}
Now, we simplify $I_{1}:$
\begin{align}\label{dist1.19}
	I_{1}=& \, c_{n,s} \lim_{\epsilon \rightarrow 0} \Biggl[ \int_{\bigl\{x_{n} \geq -\alpha_{j}\bigr\} \bigcap \mathcal{C} B_{\epsilon}({y})}\frac{{u}_{j}(y'd_{j}^{\frac{1}{2s}}+z_{j}', -(y_{n}d_{j}^{\frac{1}{2s}}+\alpha_{j}d_{j}^{\frac{1}{2s}}))- {u}_{j}(x'd_{j}^{\frac{1}{2s}}+z_{j}', -(x_{n}d_{j}^{\frac{1}{2s}}+\alpha_{j}d_{j}^{\frac{1}{2s}}))}{\abs{y-x}^{n+2s}} \nonumber \\ & \hspace{0.3cm}+ \int_{\{x_{n} \geq -\alpha_{j}\} \cap \mathcal{C} B_{\epsilon}({y})} \frac{ {u}_{j}(w'd_{j}^{\frac{1}{2s}}+z_{j}', -(x_{n}d_{j}^{\frac{1}{2s}}+\alpha_{j}d_{j}^{\frac{1}{2s}}))-{u}_{j}(xd_{j}^{\frac{1}{2s}}+z_{j})}{\abs{y-x}^{n+2s}}dx \Biggr] \nonumber \\
	=&\, c_{n,s}\lim_{\epsilon \rightarrow 0} \Biggl[ \int_{\bigl\{\widehat{x}_{n} \leq \alpha_{j}\bigr\}  \bigcap \mathcal{C}B_{\epsilon}(\widetilde{y})}\frac{{u}_{j}(\widetilde{y}d_{j}^{\frac{1}{2s}}+\widetilde{z}_{j})- {u}_{j}(\widetilde{x}d_{j}^{\frac{1}{2s}}+\widetilde{z}_{j} )}{\abs{\widetilde{y}-\widetilde{x}}^{n+2s}}d\widetilde{x}+ \int_{\bigl\{{x}_{n} \geq -\alpha_{j}\bigr\} \cap \mathcal{C}B_{\epsilon}(\widetilde{y})}\frac{{u}_{j}(\widetilde{x}d_{j}^{\frac{1}{2s}}+\widetilde{z}_{j})- \widehat{{u}}_{j}(\widetilde{x}d_{j}^{\frac{1}{2s}}+\widetilde{z}_{j} )}{\abs{\widetilde{y}-\widetilde{x}}^{n+2s}}d\widetilde{x} \Biggr].
\end{align}
Using these estimates for $I_{1}$ and $I_{2}$ in equation (\ref{dist1.16}), we get
\begin{align}
	(-\Delta)^{s}\psi_{j}(y)= c_{n,s} P.V. \int_{\mathbb{R}^{n}}\frac{{u}_{j}(\widetilde{y}d_{j}^{\frac{1}{2s}}+\widetilde{z}_{j})- {u}_{j}(\widetilde{x}d_{j}^{\frac{1}{2s}}+\widetilde{z}_{j} )}{\abs{\widetilde{y}-\widetilde{x}}^{n+2s}}d\widetilde{x} + c_{n,s} \lim_{\epsilon \rightarrow 0} \int_{\bigl\{{x}_{n} \geq -\alpha_{j}\bigr\} \bigcap \mathcal{C}B_{\epsilon}(\widetilde{y})}\frac{{u}_{j}(\widetilde{x}d_{j}^{\frac{1}{2s}}+\widetilde{z}_{j})- \widehat{{u}}_{j}(\widetilde{x}d_{j}^{\frac{1}{2s}}+\widetilde{z}_{j} )}{\abs{\widetilde{y}-\widetilde{x}}^{n+2s}}d\widetilde{x} .
\end{align}
By the change of variables $$\widetilde{y}d_{j}^{\frac{1}{2s}}+\widetilde{z}_{j}=e \text{ and } \widetilde{w}d_{j}^{\frac{1}{2s}}+\widetilde{z}_{j}=f,$$ we get
\begin{align}\label{dist1.20}
	(-\Delta)^{s}\psi_{j}(y)=&\, d_{j}(-\Delta)^{s}u_{j}(e)+ d_{j} c_{n,s} \lim_{\eta \rightarrow 0} \int_{\bigl\{f_{n} \leq 0\bigr\}\bigcap \mathcal{C}B_{\eta}(e)}\frac{{u}_{{j}}(f)   -\widehat{u}_{{j}}(f)}{\abs{e-f}^{n+2s}}df, \,\mbox{where} 
	f_n\,\,\mbox{is the $n$-th coordinate of} \,\,f \nonumber \\
	=&\, d_{j}(-\Delta)^{s}u_{j}(e) + d_{j}h(e).
\end{align}
Note that $e \in \Omega.$
Further, for $y \in B_{\rho_{j}},$ we have
\[ \psi_{j}(y)= \widehat{u}_{j}(yd_{j}^{\frac{1}{2s}}+ z_{j})= \begin{cases}
	u_{d_{j} }(yd_{j}^{\frac{1}{2s}}+z_{j})& \text{if $ y_{n} \geq - \alpha_{j}$}, \\
	u_{d_{j} }(y'd_{j}^{\frac{1}{2s}}+z_{j}', -(y_{n}d_{j}^{\frac{1}{2s}}+ \alpha_{j}d_{j}^{\frac{1}{2s}})) &  \text{if $y_{n} \leq -\alpha_{j}$}.
\end{cases} \] 
We can write $$(y'd_{j}^{\frac{1}{2s}}+z_{j}', -(y_{n}d_{j}^{\frac{1}{2s}}+ \alpha_{j}d_{j}^{\frac{1}{2s}}))=\widetilde{y}d_{j}^{\frac{1}{2s}}+\widetilde{z}_{j}.$$ Again renaming the variables $yd_{j}^{\frac{1}{2s}}+ z_{j}$ and $\widetilde{y}d_{j}^{\frac{1}{2s}}+\widetilde{z}_{j}$ by $a$ and $e,$ respectively, we get
\[ \psi_{j}(y)= \begin{cases}
	u_{d_{j} }(a)& \text{if $ y_{n} \geq -\alpha_{j}$}, \\
	u_{d_{j} }(e) &  \text{if $y_{n} \leq -\alpha_{j}$}.
\end{cases} \] 
We know that $u_{j}$ satisfies the equation $(\ref{P1})$ in the point-wise sense as well. Therefore, combining above equation with the equations (\ref{dist1.15}), (\ref{dist1.20}), we have for $y \in B_{\rho_{j}}, $
\begin{align}
(-\Delta)^{s}\psi_{j}(y)+\psi_{j}(y)=\psi_{j}(y)^{p}+ d_{j}{h}(y).
\end{align}
Now, arguing as in the proof of (A) with minor modifications, one can obtain a convergent subsequence of $\bigl\{\psi_{j} \bigr\},$ which we denote again by $\bigl\{\psi_{j} \bigr\}$ such that $\psi_{j} \rightarrow v$ in $C_{loc}^{2}(\mathbb{R}^{n}).$ Therefore as $d_{j}\downarrow 0,$ one obtain 
\begin{align}
(-\Delta)^{s}v+ v= v^{p}\,\,\, \text{in } \mathbb{R}^{n}.
\end{align}
Since $v \in H^{s}(\mathbb{R}^{n})$ and $v$ is radially decreasing so that $v$ is spherically symmetric to $y=0.$ Moreover, $v$ has the power type decay at infinity, which follows from Theorem \ref{radial2}, i.e.,
\begin{align}
v(r) \leq \frac{C_{2}}{r^{n+2s}}, \,\,\, r \geq 1,
\end{align}
for some constant $C_{2}>0.$ Let us define $\delta_{R}$ as in (\ref{dist1.10}), i.e., $$\delta_{R}:= \frac{C_{2}}{R^{n+2s}},$$ for $R$ sufficiently large to be defined later. Then, there exists an integer $j_{R}$ such that for $j \geq j_{R},$
\begin{align}\label{dist1.23}
\norm{\psi_{j}-v}_{C^{2}(\overline{B_{4R}})} \leq \delta_{R}. 
\end{align}
We choose $R$ sufficiently large that $R > \alpha_{j}$ for all $j,$ where $\alpha_{j}$'s are same as defined earlier right after the equation (\ref{alpha}). We can choose such a $R$ because $\left\{\alpha_{j}\right\}$ is a bounded sequence. The following lemma is very useful to prove our claim that $z_{d} \in \partial \Omega.$ 
\begin{lem} (see Lemma 4.2 \cite{Ni1})\label{dist1.24}
Let $f \in C^{2}(\overline{B_{t}})$ be a radial function. Assume that $f$ satisfies $f'(0)=0$ and $f''<0$ for $0 \leq r \leq t.$ Then there exists a $\eta> 0$ such that if $g \in C^{2}(\overline{B_{t}}) $  satisfies \begin{enumerate}
\item $\nabla g(0)=0$ 
\item $ \norm{f-g}_{C^{2}(\overline{B_{t}})}< \eta,$
\end{enumerate}
then $\nabla g \neq 0$ for $x \neq 0.$
\end{lem}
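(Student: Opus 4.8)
The plan is to localise the problem to rays through the origin and to exploit second‑order information rather than the (vanishing) gradient of $f$ at $0$. Since $f$ is radial and of class $C^2$ on $\overline{B_t}$, its Hessian extends continuously to all of $\overline{B_t}$, and for a unit vector $\omega$ and $r\in(0,t]$ one has the elementary identity $\omega^{T}D^2f(r\omega)\,\omega=f''(r)$, while $\omega^{T}D^2f(0)\,\omega=f''(0)$. As $f''$ is continuous and strictly negative on the compact interval $[0,t]$, it attains a negative maximum there, so there is $c>0$ with $f''(r)\le-c$ on $[0,t]$; hence $\omega^{T}D^2f(x)\,\omega\le-c$ for every $x\in\overline{B_t}$ and every unit vector $\omega$.

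Next I would fix $\eta$. Because $\norm{f-g}_{C^2(\overline{B_t})}<\eta$ bounds the second derivatives of $g-f$ uniformly, choosing $\eta$ small enough forces $\bigl|\omega^{T}(D^2g-D^2f)(x)\,\omega\bigr|\le c/2$ for all $x\in\overline{B_t}$ and all unit $\omega$. Consequently, for each fixed unit vector $\omega$, the scalar function $r\mapsto g(r\omega)$ on $[0,t]$ satisfies $\frac{d^2}{dr^2}g(r\omega)=\omega^{T}D^2g(r\omega)\,\omega\le -c/2<0$, so $r\mapsto\frac{d}{dr}g(r\omega)$ is strictly decreasing on $[0,t]$. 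Invoking hypothesis (1), $\nabla g(0)=0$, so this derivative vanishes at $r=0$; strict monotonicity then gives $\frac{d}{dr}g(r\omega)=\nabla g(r\omega)\cdot\omega<0$ for all $r\in(0,t]$. In particular $\nabla g(r\omega)\ne 0$ for every $r\in(0,t]$ and every unit vector $\omega$, i.e.\ $\nabla g(x)\ne 0$ for $x\in\overline{B_t}\setminus\{0\}$, which is the assertion.

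The one point requiring care is the behaviour near the origin: there $\nabla f$ itself vanishes, so no naive $C^1$‑perturbation argument bounding $\abs{\nabla g}$ from below by $\abs{\nabla f}$ can succeed. The resolution is to pass to second order — the radial convexity defect $f''<0$ is bounded away from zero uniformly up to $r=0$ (using continuity of the Hessian for radial $C^2$ functions), and this property is stable under $C^2$‑small perturbations, regardless of the fact that $\nabla g(0)=0$. This is exactly the mechanism behind Lemma 4.2 of \cite{Ni1}, and the argument above is its adaptation; the fractional nature of the underlying problem plays no role here, since the lemma is purely a statement about $C^2$ functions on a ball.
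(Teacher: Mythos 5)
The paper does not reproduce a proof of this lemma; it simply cites Lemma~4.2 of Ni--Takagi \cite{Ni1}, so there is no in-text argument to compare against. Your proof is correct and is essentially the same mechanism as the original Ni--Takagi argument: the radial second derivative of $f$ is uniformly bounded away from zero on the compact ball, this bound survives a $C^2$-small perturbation, and then a function that is strictly concave along every ray through the origin with vanishing radial derivative at $r=0$ must have strictly negative radial derivative for $0<r\le t$, which forces $\nabla g\ne 0$ there.

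One small over-statement worth flagging, although it does not affect the validity of the argument: you assert that $f''(r)\le -c$ on $[0,t]$ implies $\omega^{T}D^{2}f(x)\,\omega\le -c$ for \emph{every} $x\in\overline{B_t}$ and \emph{every} unit vector $\omega$. Your preceding identity only gives this when $\omega$ is the radial direction $x/\abs{x}$; for a tangential $\omega$ one gets $\phi'(r)/r$ rather than $\phi''(r)$, where $f(x)=\phi(\abs{x})$. The blanket claim is in fact true (by the mean value theorem, $\phi'(r)/r=\phi''(\xi)\le -c$ for some $\xi\in(0,r)$, so any convex combination of $\phi''(r)$ and $\phi'(r)/r$ is also $\le -c$), but you did not justify it, and more to the point you never use it: in the crucial step you only evaluate $\omega^{T}D^{2}g(r\omega)\,\omega$, i.e.\ the Hessian in the radial direction at $r\omega$, which is exactly the case your computation covers. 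Deleting the sentence, or restricting it to the radial direction, would make the proof tighter without losing anything.
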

Now, we use this lemma to show that $\psi_{j}$ has only one local maximum point in $B_{R}.$ For this, we choose two numbers $k,l$ $(0<k<l)$ such that $v''(r)<0 $ for $0 \leq r \leq k.$ Note that $v''(0)<0$ and $ v(k)<1.$ Let us define 
\begin{align*}
\theta = \min \bigl\{ \abs{v'(r)} \mid k \leq r \leq l \bigr\}.
\end{align*}
It is easy to observe that $\theta >0$ because $v'<0$ for $r>0.$ Then for $\delta_{R}<\theta,$ we have by (\ref{dist1.23}) that 
\begin{align*}
0< \theta - \delta_{R} \leq \abs{\nabla v(y)} -\abs{\nabla \psi_{j}(y)-\nabla v(y)} \leq \abs{\nabla \psi_{j}(y)} \text{ for }  k \leq \abs{y} \leq l.
\end{align*}
Applying Lemma \ref{dist1.24} in the ball $\overline{B}_{k},$ we conclude that $y=0$ is the only local maximum point of $\psi_{j}$ in $B_{l}.$ If $y_{j}$ is a maximum point of $\psi_{j}$ in $B_{R},$ then by Lemma \ref{sup1}, we have $\psi_{j} \geq 1.$ Choose $R>0$ sufficiently large so that $\delta_{R}< 1-v(l).$ Therefore $$\psi_{j}(y) \leq v(y) + \delta_{R} \leq v(l)+ \delta_{R}<1.$$ Hence $y_{j} \in B_{l},$ and therefore $y_{j}=0.$\\
If $\alpha_{j}>0,$ then by the definition of $\widehat{u}_{j},$ $z_{R}^{*}=(z_{j}', -\alpha_{j}d_{j}^{\frac{1}{2s}})$ is also a maximum point of $\widehat{u}_{j}.$ This implies that $(0,-\alpha_{j})$ is another maximum point of $\psi_{j}$ in $B_{R},$ a contradiction. This proves our claim. 
\qed

\section*{Appendix A}
\noindent \textbf{Proof of the Inequality (\ref{bound1.3}):} We show that for real numbers $x,y \geq 0$ and $k \geq 1,$
\begin{align}
	\frac{1}{k}(x^{k}-y^{k})^{2}\leq (x-y)(x^{2k-1}-y^{2k-1}).
\end{align}
Clearly, the inequality holds when either $x$ or $y$ or both are zero. Thus, without  loss of generality we may assume that $x>y>0.$ Now our claim is reduced to show that 
$$	\frac{1}{k} \left(1- \left(\frac{y}{x}\right)^k \right)^2 \leq \left( 1- \frac{y}{x}\right) \left(1-  \left(\frac{y}{x}\right)^{2k-1}\right)$$
i.e., to show that $$(1-a^k)^2 \leq k (1-a)(1-a^{2k-1}),$$ where $0<a:= \frac{y}{x}<1.$ Consider
\begin{align*}
f(a)&:= k (1-a)(1-a^{2k-1})- (1-a^k)^{2} \\
& \geq (1-a^k)  \left(k(1-a)- (1-a^k) \right)\\
& \geq (1-a^k)(1-a) \left(k-(1+a+a^2+\dots + a^{k-1}) \right)\\
& \geq (1-a^k)(1-a) (k-k) =0.
\end{align*}
This proves the inequality \qed.


\section*{Acknowledgement} 
The first author thanks CSIR for the financial support under the scheme 09/1031(0009)/2019-EMR-I. The second
author thanks DST/SERB for the financial support under the grant no. CRG/2020/000041.

\section*{Statement} 
On behalf of all authors, the corresponding author states that there is no conflict of interest.

\end{document}